\newtheorem{thm}{Theorem}[section]
\newtheorem{dfn}[thm]{Definition}
\newtheorem{lem}[thm]{Lemma}
\newtheorem{rem}[thm]{Remark}
\newtheorem{prop}[thm]{Proposition}
\newtheorem{asm}[thm]{Assumption}
\newtheorem{exm}[thm]{Example}
\newtheorem{cor}[thm]{Corollary}
\numberwithin{equation}{section}
\def\reff#1{{\rm(\ref{#1})}}
\def\be{\begin{equation}}
\def\ee{\end{equation}}
\def\bea{\begin{eqnarray}}
\def\eea{\end{eqnarray}}
\def\bea*{\begin{eqnarray*}}
\def\eea*{\end{eqnarray*}}
\def\cB{{\mathcal B}}
\def\cC{{\mathcal C}}
\def\cD{{\mathcal D}}
\def\cF{{\mathcal F}}
\def\cG{{\mathcal G}}
\def\cH{{\mathcal H}}
\def\cI{{\mathcal I}}
\def\cK{{\mathcal K}}
\def\cL{{\mathcal L}}
\def\cM{{\mathcal M}}
\def\cP{{\mathcal P}}
\def\R{{\mathcal R}}
\def\cU{{\mathcal U}}
\def\cQ{{\mathcal Q}}
\def\cT{{\mathcal T}}
\def\cX{{\mathcal X}}
\def\E{\mathbb{E}}
\def\F{\mathbb{F}}
\def\N{\mathbb{N}}
\def\Q{\mathbb{Q}}
\def\R{\mathbb{R}}
\newcommand{\one}{\mathbbm 1}
\def\ve{\varepsilon}
\def\cad{c\`adl\`ag\ }
\def\reff#1{{\rm(\ref{#1})}}
\def\Om{\Omega}
\def\om{\omega}
\def\omh{\widehat{\omega}}
\def\omhn{\widehat{\omega}_n}
\def\hcI{\widehat{\cI}}
\newtheorem{theorem}{Theorem}[section]
\newtheorem{lemma}[theorem]{Lemma}
\title{Martingale optimal transport duality\footnote{Research 
was partly supported by the Swiss National Foundation Grant SNF 200020-172815.}}
\author{Patrick Cheridito\footnote{Department of Mathematics, ETH Zurich, Switzerland}
\and Matti Kiiski\footnotemark[2]
\and David J. Pr{\"o}mel\footnote{Mathematical Institute, University of Oxford, UK}
\and H. Mete Soner\footnote{Department of Operations Research and Financial Engineering, Princeton University, USA}}
\date{}
\begin{document}

\maketitle

\begin{abstract}

We obtain a dual representation of the Kantorovich functional defined for functions on the Skorokhod space using quotient sets. Our representation takes the form of a Choquet capacity generated by martingale measures satisfying additional constraints to ensure compatibility with the quotient sets. These sets contain stochastic integrals defined pathwise and two such definitions starting with simple integrands are given. Another important ingredient of our analysis is a regularized version of Jakubowski's S-topology on the Skorokhod space.
 
\end{abstract}

\noindent\textbf{Key words:} Convex duality, Skorokhod space, Jakubowski's $S$-topology, model-free finance
\smallskip\newline
\noindent\textbf{Mathematics Subject Classification:} 60B05, 60G44, 91B24, 91G20

% Classification
% --------------
% 60B05 - Probability theory on algebraic and topological structures (Probability measures on topological spaces)
% 60G44 - Stochastic processes (Martingales with continuous parameter)
% 91G20 - Mathematical finance (Derivative securities)
% 91B24 - Mathematical economics (Price theory and market structure)

\section{Introduction}

Kantorovich duality 
\cite{kantorovich1,kantorovich2} is an important
tool in the classical theory of optimal
transport \cite{ambrosio-gigli,BK,villani}.
Abstractly it provides a dual
representation for  a 
convex, lower semicontinuous functional
$\Phi$ defined on a {\em{locally convex
Riesz space}} $\cX$, i.e., a 
 locally convex lattice-ordered topological vector space.
In the Kantorovich setting, $\cX$ is a set of real-valued functions defined
on a topological space $\Om$. Typical examples are the set of all bounded continuous
functions $\cC_b(\Om)$ or the set of all bounded Borel measurable functions
$\cB_b(\Om)$ with the supremum norm. 

For a {\em{quotient set}} given by a convex cone $\cI$, we consider the extended real-valued functional given by
\begin{equation*}
\Phi(\xi;\cI) := \inf \left\{ c \in \R : c+ \ell \ge \xi \mbox{ for some } \ell \in \cI \right\},
\quad \xi \in \cX.
\end{equation*} 
There are several immediate properties of
$\Phi$. For instance, it follows directly from the definition that $\Phi$ is monotone and convex.  Also,
it is clear that for any constant~$c$, one has
$\Phi(c;\cI)\le c$ and 
$\Phi(\lambda \xi;\cI)= \lambda \Phi(\xi;\cI)$
for every $\lambda \ge 0$. If additionally, one can establish that
$\Phi$ is lower semicontinuous and proper (i.e.,
not identically equal to infinity and never equal to minus infinity),
then one may apply the Fenchel-Moreau theorem \cite[Theorem~2.4.14]{zalinescu} to obtain the  
representation
\begin{equation} \label{rep}
  \Phi(\xi;\cI)= \sigma_{\partial \Phi}(\xi)
  := \sup_{\varphi \in \partial \Phi}\ \varphi(\xi), \quad \xi \in \cX,
\end{equation}
where the set of sub-gradients  $\partial \Phi$ is 
the convex subset of  the topological dual $\cX^*$ of $\cX$ given by
$\partial \Phi = \partial \Phi(0;\cI):= \left\{ \varphi \in \cX^* :
\varphi(\xi) \le \Phi(\xi;\cI) \mbox{ for all } \xi \in \cX  \right\}$.

This formulation is similar to the one given in \cite{BK}.
In addition to many other applications, it  provides a natural framework 
for risk management \cite{RR,risk}. Recently, it has also been used to reduce 
model dependency in pricing problems \cite{BHLP,paris}. In these applications, $\Phi$ 
is the {\em{super-replication functional}} and $\cI$ the {\em{hedging set}}. The main 
goal of this paper is to establish the dual representation \eqref{rep} in the case where 
$\Omega$ is a suitable subset of the Skorokhod space 
taking also the trajectory of transportation into account.

In classical optimal transport, one has $\Om= \R^d \times \R^d$
and the quotient set $\cI_{ot}$ is defined through
two given probability measures $\mu,\nu$ on $\R^d$
by
$$
\cI_{ot}:=  \left\{ f \oplus h : f, h \in \cC_b(\R^d) \mbox{ and } \mu(f)=\nu(h)=0
\right\},
$$
where $\mu(f)=\int f\, d\mu$, $\nu(h)=\int h\, d\nu$  and
$(f\oplus h) (x,y):= f(x)+ h(y)$.
Let $\Phi_{ot}$ be the corresponding convex functional
on $\cX= \cC_b(\Om)$ with the supremum norm.  
Then, it is immediate that $\Phi_{ot}$ is proper
and Lipschitz continuous. Moreover,
$$
\partial \Phi_{ot} = \left\{ \varphi \in \cC_b(\Om)^* : \varphi \ge 0 \mbox{ and }
\varphi(f\oplus h)= \mu(f)+\nu(h) \mbox{ for all } f, h \in \cC_b(\R^d)
\right\}.
$$
Hence, any  $\varphi \in \partial \Phi_{ot}$ is non-negative and has marginals
$\mu$ and $\nu$.  It follows that $\varphi$ is tight and therefore a Radon 
probability measure on $\Omega$.

Alternatively,
one could deduce the countable additivity
of the dual elements by
using the $\beta_0$-topology on $\cC_b(\Om)$
recalled in Appendix~\ref{sec:beta0} below.
If the topology on $\Om$ is completely regular Hausdorff (T$_{3\frac12}$), then
the topological dual of $\cC_b(\Om)$ with the $\beta_0$-topology
is  equal to the set of all signed Radon measures of finite total variation
and the tightness argument is not needed.  On the other hand, one then has to prove 
the continuity of $\Phi$ with respect to this topology.  We use
this observation in our study, which considers the problem on a more complex 
topological space~$\Om$.

Kellerer \cite{kellerer} used Choquet’s capacibility theorem \cite{choquet} to show that the optimal transport 
duality also holds for measurable (and Suslin) functions if measurable functions are used in the 
definition of the quotient set. Similarly, for martingale or constrained optimal transport, one needs to 
enlarge the set ${\cal I}$ to achieve duality for more general functions with the same set of sub-gradients
\cite{BNT,ekren-soner}. Alternatively, one could fix the quotient set $\cI$
and obtain duality by extending the set of sub-gradients
as it is done in \cite{ekren-soner}. We do not pursue this approach here.

In this paper, we study general martingale optimal transport
on a subset $\Omega$ of the Skorokhod space $\cD([0,T];\R^d_+)$ of all $\R^d_+$-valued
\cad functions, i.e., functions $\om:[0,T] \mapsto \R_+^d$ that are
continuous from the right and have finite left limits. 
We assume that $\Om$ is a closed subset of $\cD([0,T];\R^d_+)$ with respect to Jakubowski's 
$S$-topology \cite{jakubowski, jakubowski4} and endow it with a regularized version of $S$.
Our main goal is to prove duality with the same Choquet capacity
defined by countably additive (martingale) measures,
for different choices of $\cX$ by appropriately extending the quotient set.  

Martingale optimal transport was first introduced
in a discrete time model in \cite{BHLP}
and in continuous time in \cite{paris}.
Since then it has been investigated 
extensively. The initial duality results
\cite{yan-mete,mete3,yan-mete2,obloj}
are proved
by real-analytic techniques and
only for uniformly continuous functions.
Alternatively,
\cite{BCK,BKN,david2,CKT2,CKT} use functional analytical tools.  In particular,
\cite{CKT2} provides
a general representation result. 
\cite{CKT} proves duality in discrete time and \cite{BCK,BKN,david2} for a $\sigma$-compact set
$\Omega$. Our approach is similar to that of \cite{david2}  but
without the assumption of $\sigma$-compactness.
Instead, we use the $S$-topology introduced by Jakubowski \cite{jakubowski}
which provides an efficient
characterization
of compact sets via up-crossings. This characterization
allows us in Theorem~\ref{t.local} to construct an increasing sequence of
compact sets $K_n$ such that $\Phi(\one_{\Omega \setminus K_n})$
decreases to zero.  This  localization result is central to our approach. 
In a similar context, Jakubowski's $S$-topology was first used
in \cite{GTT3,GTT2,GTT1} to prove several important properties of martingale optimal transport.
Their set-up is related to \cite{paris} and differs from ours.

In martingale optimal transport, the quotient set $\cI$
contains the ``stochastic integrals''.  Since
there is no a priori given probabilistic structure,
the definition of the
 integral must be pathwise and is 
a delicate aspect of the problem.
Starting from simple integrands, we first
extend the integrands using
the theory developed by Vovk \cite{vovk,vovk2012},
later by \cite{perkowski} and used
in \cite{david} to prove duality.
This construction provides
duality for upper semicontinuous
functions. 
We then further enlarge the quotient set $\cI$ by taking its Fatou-closure
as defined in Subsection \ref{ss.hs}
and prove the duality for measurable functions
by using Choquet's capacitability theorem
as done earlier in \cite{BCK,BKN,david2,BNT}.
These results are stated in Theorem \ref{t.main}.
Section \ref{sec:counter} provides examples
showing the necessity of enlarging the set of integrands.

There are also deep 
connections between duality and the fundamental 
theorem of asset pricing (FTAP),
which provides equivalent conditions
for the dual set of measures to be non-empty.
In the classical probabilistic setting,
\cite{HP} proves it 
for the Black-Scholes model,
\cite{DMW} for discrete time  and 
\cite{DS,DS1} in full generality.
The robust discrete time model has first been studied in \cite{RFTAP} and
later in \cite{matteo2,matteo,matteo3}. \cite{BBKN,BN} on the other hand
study probabilistic models with none or finitely many static options.
We also obtain a general robust FTAP, Corollary~\ref{c.ftap},
as an immediate consequence of our main duality result Theorem \ref{t.main}.

The paper is organized as follows.  After providing the necessary
structure and definitions in Section \ref{sec:set-up}, we state 
the main result in Section \ref{sec:main}.  Important properties 
of the dual elements are proven in Section \ref{sec:measures},
and several approximation results are derived in Section \ref{sec:approximation}.
Section \ref{sec:Cb} analyses $\Phi$ on $\cC_b(\Om)$.
The proof of the main result, Theorem \ref{t.main}, is given in Section \ref{sec:proofs}.
Several examples are constructed in Section \ref{sec:counter}.
Section \ref{sec:fin} discusses applications to model-free finance.
The topological structures used in the paper and a sufficient condition
for a probability measure to be a martingale measure are given in the Appendix.

\section{Set-up}
\label{sec:set-up}

Let $\Omega$ be a non-empty subset of the Skorokhod space ${\cal D}([0,T]; \mathbb{R}^d_+)$ of 
all c\`adl\`ag functions $\omega \colon [0,T] \to \mathbb{R}^d_+$ that is closed with respect to 
Jakubowski's $S$-topology \cite{jakubowski,jakubowski4}. We denote the relative topology of $S$ on
$\Omega$ again by $S$ and, similarly to \cite{stopo}, endow $\Omega$ with the 
coarsest topology $S^*$ making all $S$-continuous functions $\xi \colon \Omega \to \mathbb{R}$ continuous.
More details on $S$ and $S^*$ are given in Appendix A, where it is shown that $(\Omega, S^*)$ is a 
perfectly normal Hausdorff space ($T_6$), and every Borel probability measure on $(\Omega, S^*)$ 
is automatically a Radon measure. Moreover, we know from \cite{jakubowski, stopo} that for all $s < t$ and 
every $i = 1, \dots, d$, $\int_s^t \omega^i(u)\, du$ is continuous with respect to $S$, and 
\[
\| \omega\|_{\infty} := \sup_{0 \le t \le T} |\omega(t)| 
\]
is $S$-lower semicontinuous, where $|\cdot|$ denotes the Euclidean norm on $\mathbb{R}^d$.

For $t \in [0,T]$, we denote by $X_t(\omega) = \omega(t)$ the coordinate 
map on $\Omega$ and let $\mathbb{F}^X = ({\cal F}_t)_{t \in [0,T]}$ be the natural filtration 
of $X$ given by ${\cal F}^X_t = \sigma(X_s : s \le t)$. By $\mathbb{F} = ({\cal F}_t)$, we denote the
right-continuous filtration given by ${\cal F}_t = {\cal F}^X_{t+} = \bigcap_{s > t} F^X_s$, $t < T$, and
${\cal F}_T = {\cal F}^X_T$. Adapted and predictable processes, as well as stopping times, 
are defined with respect to the filtration $\mathbb{F}$.
In particular, for any open subset $A$ of $\mathbb{R}^d_+$, the hitting time
$\tau_A(\omega) = \inf \{t \ge 0: X_t(\omega) \in A \}$ is a stopping time; see e.g. \cite{dellacheriemeyer,protter} for these facts. 
Moreover, arguments from \cite{jakubowski, stopo} show that ${\cal F}_T = {\cal F}^X_T$ is equal to the collection of all Borel subsets of $(\Omega, S^*)$.

\subsection{Riesz spaces}%\label{ss.riesz}

Let $\cB(\Om)$ be the set of all Borel measurable functions $\xi \colon \Omega \to [-\infty, \infty]$ 
and $\cB_b(\Om)$ the subset of bounded functions in $\cB(\Om)$. For $p \in [1, \infty)$,
we define
$$
\cB_p(\Omega) := \left\{ \xi \in \cB(\Om) : 
\omega \mapsto \xi(\omega)/(1+ \|\om\|_{\infty}^p) \mbox{ is bounded}\right\},
$$
and
$$
\cB_0(\Om):= \left\{ \xi \in \cB_b(\Om) : \mbox{for all } \varepsilon > 0, 
\left\{\omega \in \Omega : |\xi(\omega)| > \varepsilon\right\}
\mbox{ is relatively compact} \right\}.
$$
By $\cU_b(\Om)$ and $\cU_p(\Om)$ we denote the sets of all upper semicontinuous 
functions in $\cB_b(\Om)$ and $\cB_p(\Om)$, respectively. $\cC(\Om)$ is the 
set of all real-valued continuous functions on $\Omega$. $\cC_b(\Om)$ and 
$\cC_p(\Om)$ are defined analogously to $\cU_b(\Om)$ and $\cU_p(\Om)$. 
In addition, we need the set
$$
\cC_{q,p}(\Om):= \left\{ \xi \in \cC(\Om) : \xi^+ \in \cC_q(\Om), \, \xi^-\in \cC_p(\Om)\right\},
$$
where $\xi^+ = \max(\xi,0)$ and $\xi^- = \max(-\xi,0)$.

By ${\cal M}(\Omega)$ we denote the set of all signed Radon measures of bounded total variation on 
$\Omega$ and by $\cP(\Om) \subset \cM(\Om)$ the subset of probability measures.
For $Q \in \cP(\Om)$ and $\xi \in {\cal B}(\Omega)$, we define the expectation $\E_Q[\xi] \in [-\infty,\infty]$ 
by $\E_Q[\xi] := \E_Q[\xi^+]- \E_Q[\xi^-]$ with the convention $\infty-\infty=-\infty$. For $p \ge 1$, 
$\cL^p(\Om,Q)$ is the collection of all functions $\xi \in \cB(\Om)$ satisfying
$\mathbb{E}_Q[|\xi|^p]<\infty$.

The $\beta_0$-topology on $C_b(\Om)$ is generated by the semi-norms 
$\|. \, \eta\|_\infty$, $\eta \in \cB_0^+(\Om)$, where we use the
superscript $^+$ to indicate the subset of non-negative elements.
More details on the $\beta_0$-topology are given in Appendix \ref{sec:beta0}.
Since $(\Om,S^*)$ is a perfectly normal Hausdorff space, it is also completely regular, 
and it follows that the dual of $\cC_b(\Om)$ with the $\beta_0$-topology is $\cM(\Om)$; see e.g.
\cite{jarchow, sentilles}. 

\subsection{The standing assumption}%\label{ss.assumption}

We fix  {\em{universal constants}} $ 1 \le p <q$.  All our definitions and results depend on 
them, but we do not show this dependence in our notation.

\begin{dfn}
{\rm For a convex cone $\cG \subset \cB(\Om)$, we denote by 
$\cQ(\cG)$ the (possibly empty) set of all probability measures 
$Q \in \cP(\Om)$ such that $\E_Q[\gamma]  \le 0$ for all $\gamma \in \cG$
and the canonical map $X$ is an $(\F,Q)$-martingale, i.e., 
for every $t \in [0,T]$, $X_t \in \cL^1(\Om,Q)$
and $\E_Q\left[Y \cdot (X_t-X_T)\right]=0$
for all $\cF_t$-measurable
$Y \in \cB_b(\Om)^d$.}
\end{dfn}

The following assumption is used throughout the paper.
Although all results assume it, we do not always state this assumption 
explicitly.

 \begin{asm}
 \label{asm.1} 
{{ 
$\cG \subset \cC_{q,p}(\Om)$ is a convex cone, and there exist $c_q \in \mathbb{R}_+$ and $\xi_q \in \cG$
such that $\left|X_T\right|^q \le c_q + \xi_q$.
}}
\end{asm}
Then,  for every  $Q \in \cQ(\cG)$, $\E_Q\left[ \left|X_T\right|^q\right]
\le c_q + \E_Q[\xi_q] \le c_q$. We combine this with Doob's martingale 
inequality to conclude that
\be
\label{e.power}
c^*_q:= \sup_{Q \in \cQ(\cG)}
\ \E_Q\left[ X_*^q \right] <\infty, \quad
{\text{where}}\quad
X_*:=\sup_{t \in [0,T]}  |X_t|.
\ee
In particular, $\E_Q\left[Y \cdot (X_t-X_T)\right]=0$ for all $Q \in \cQ(\cG)$, 
every $t \in [0,T]$ and any ${\cal F}_t$-measurable 
$Y \in \cB_{q-1}(\Om)^d$.

If, for a given $\mu \in \cP(\R_+^d)$, $\cG$ contains all functions
$g(\om(T))- \mu(g)$ with $g \in \cC_b(\R_+^d)$,
then any element $Q \in \cQ(\cG)$ has the marginal
$\mu$ at the final time $T$.  Hence, the above construction
includes the classical example of given marginals.
For this example, the celebrated result of Strassen \cite{strassen}
provides necessary and sufficient conditions for $\cQ(\cG)$ to be non-empty;
see also Corollary \ref{c.ftap}, below.

\subsection{Integrals and quotient sets}
\label{ss.hs}
  
A \emph{simple integrand}  $H$ consists of a sequence of pairs $(\tau_n, h_n)_{n\in \N}$
such that
$\tau_0\leq \tau_1\leq \tau_2 \leq \cdots $ are
$\F$-stopping times, and each
$h_n \in \cB_{q-1}(\Om)^d$ is $\cF_{\tau_n}$-measurable.
We assume that for every $\om \in \Om$ there is an index
$n(\om)$ such that
$\tau_{n(\om)}  \ge T$.
The corresponding integral is defined directly as
$$
(H\cdot X)_t(\omega):= 
\sum_{n=0}^\infty h_n(\omega)\cdot
(X_{\tau_{n+1}\wedge t}(\omega)-X_{\tau_n\wedge t}(\omega)),
\quad (t,\om) \in [0,T] \times \Om.
$$
A simple integrand $H$ is  called 
\emph{admissible} if for some $\lambda \in \cB^+_q(\Om)$
$$
(H\cdot X)_{\tau_m \wedge t} (\omega)\geq -\lambda(\omega) \quad \mbox{for all }
(t,\omega,m)\in[0,T]\times\Omega\times\mathbb{N}.
$$
$\cH_s$ denotes the set of all admissible 
simple integrands.
An {\em{admissible integrand}} is a 
collection of simple integrands  $H:=(H^k)_{k \in \N}\subset \cH_s$
satisfying
$(H^k \cdot X)_t \ge - \Lambda$,
for every  $t \in [0,T]$, $k\in \N$,
for some $\Lambda \in \cB_q^+(\Om)$.  $\cH$ denotes
the set of all admissible integrands.  The corresponding  
integral is defined pathwise by,
$$
(H \cdot X)_t(\om):= \liminf_{k \to \infty} (H^k \cdot X)_t(\om) \quad 
\mbox{for all } (t,\om) \in [0,T]\times\Om.
$$
We use the following quotient sets:
$$
\cI_s(\cG):= \{ \gamma + (H \cdot X)_T :
 \gamma \in \cG,\  H \in \cH_s\},
 $$
 $$
\cI(0):= \{ (H \cdot X)_T:  H \in \cH\},
\quad
\cI(\cG):= \{\gamma + (H \cdot X)_T :
 \gamma \in \cG,\  H \in \cH\}.
$$
Moreover, let $\hcI(\cG)\subset \cB(\Om)$ be the Fatou-closure of $\cI(\cG)$, i.e., the smallest set 
of extended real-valued Borel measurable functions containing $\cI(\cG)$ with the property that
for every sequence $\{\ell_n\}_{n \in \N} \subset \hcI(\cG)$ satisfying a 
uniform lower bound $\ell _n \ge -\lambda$ for some $\lambda \in \cB_q^+$,
$\liminf_n \ell_n \in \hcI(\cG)$. In the context of financial applications,
similar integrals were first constructed in \cite{vovk} and later used in 
\cite{david,perkowski,vovk2012}. Their properties have recently been studied in \cite{fatou}. 

It is clear that $\cI_s(\cG) \subset
\cI(\cG) \subset \hcI(\cG)$
and $\cI(0)$  are all convex cones.
 
\section{Main result}
\label{sec:main}

\begin{theorem}
\label{t.main}
Under Assumption \ref{asm.1},
\begin{align}
\label{e.dual1}
\Phi(\xi;\cI(\cG))& = \sigma_{\cQ(\cG)}(\xi) \quad \mbox{for all } \xi \in \cU_p(\Om) \mbox{ and}\\
\label{e.dual2}
\Phi(\xi;\hcI(\cG)) &= \sigma_{\cQ(\cG)}(\xi) \;  \mbox{ for all } \xi \in \cB_p(\Om),
\end{align}
where $\sigma_{\cQ(\cG)}(\cdot)
:=\sup_{Q \in \cQ(\cG)}\E_Q[\cdot]$
is the support functional of $\cQ(\cG)$.
\end{theorem}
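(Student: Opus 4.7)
The plan is to prove both dualities via a standard weak/strong duality split, building the strong direction through the chain $\cC_b(\Om)\to\cU_b(\Om)\to\cU_p(\Om)\to\cB_p(\Om)$, and absorbing the Fatou closure only at the final step through Choquet capacitability.

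For weak duality ($\ge$), I would fix $Q\in\cQ(\cG)$. For any admissible integrand $H=(H^k)_k\in\cH$ with lower bound $-\Lambda\in\cB_q^+(\Om)$, each $(H^k\cdot X)$ is a bounded-below local $Q$-martingale, hence a supermartingale; the required integrability of the stopped increments comes from $h_n\in\cB_{q-1}(\Om)^d$ together with the universal moment bound \reff{e.power}. Thus $\E_Q[(H^k\cdot X)_T]\le 0$, and Fatou yields $\E_Q[(H\cdot X)_T]\le 0$. Combined with $\E_Q[\gamma]\le 0$ for $\gamma\in\cG$, this gives $\E_Q[\ell]\le 0$ for all $\ell\in\cI(\cG)$, and the bound passes to $\hcI(\cG)$ by induction along the Fatou-closure construction. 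Consequently $c+\ell\ge\xi$ forces $c\ge\E_Q[\xi]$, proving $\Phi\ge\sigma_{\cQ(\cG)}$ on both $\cU_p(\Om)$ and $\cB_p(\Om)$.

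For the strong direction ($\le$), the core is the case $\xi\in\cC_b(\Om)$. Viewing $\Phi(\cdot;\cI_s(\cG))$ as a monotone, sublinear, proper functional on $\cC_b(\Om)$ endowed with the $\beta_0$-topology, whose dual is $\cM(\Om)$, I would establish $\beta_0$-lower semicontinuity; the essential input is the localization Theorem \ref{t.local}, which supplies compacts $K_n\uparrow$ satisfying $\Phi(\one_{\Om\setminus K_n})\downarrow 0$ and lets one convert uniform convergence on compacta into convergence of $\Phi$-values. Fenchel--Moreau then gives $\Phi(\xi;\cI_s(\cG))=\sup_{\varphi\in\partial\Phi}\varphi(\xi)$ with $\partial\Phi\subset\cM(\Om)$, and I would identify $\partial\Phi=\cQ(\cG)$ by testing the defining inequality of $\partial\Phi$: nonnegativity from $\ell=0$ and monotonicity; normalization from constants; the $\F$-martingale property from inserting $\pm Y\cdot(X_t-X_T)\in\cI_s(0)$ with $\cF_t$-measurable $Y\in\cB_b(\Om)^d$; and $\E_\varphi[\gamma]\le 0$ for $\gamma\in\cG$ directly. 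Extension to $\xi\in\cU_b(\Om)$ uses perfect normality of $(\Om,S^*)$ to approximate $\xi$ from above by a decreasing sequence in $\cC_b(\Om)$, with monotone convergence on both sides (using Theorem \ref{t.local} again to control tails on the dual side). For $\xi\in\cU_p(\Om)$, I would truncate $\xi_N=\xi\wedge N$: the tail $\xi-\xi_N$ is uniformly $\cL^1(Q)$-controlled over $Q\in\cQ(\cG)$ via \reff{e.power} since $p<q$, and the enlargement from $\cH_s$ to $\cH$ accommodates the corresponding tail-hedging on the primal side.

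Finally, to upgrade from $\cU_p$ to $\cB_p$, I would apply Choquet's capacitability theorem. The Fatou-closure $\hcI(\cG)$ is precisely what makes $\Phi(\cdot;\hcI(\cG))$ continuous along increasing sequences of Borel functions, hence a Choquet capacity on the $\cU_p$-paving. Capacitability then gives $\Phi(\xi;\hcI(\cG))=\sup\{\Phi(\eta;\hcI(\cG)):\eta\in\cU_p(\Om),\,\eta\le\xi\}$ for every $\xi\in\cB_p(\Om)$, and pairing this with the already-established $\cU_p$-duality together with the inner Radon regularity of each $Q\in\cQ(\cG)$ closes the argument. The principal obstacle throughout is obtaining the $\beta_0$-lower semicontinuity of $\Phi$ on $\cC_b(\Om)$: without $\sigma$-compactness of $\Om$, classical tightness arguments fail, and it is exactly Jakubowski's $S$-topology, through its up-crossing compactness criterion, that delivers the compact exhaustion of Theorem \ref{t.local} needed to recover this continuity.
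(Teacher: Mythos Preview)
Your overall architecture matches the paper's, but there is one genuine gap in the identification $\partial\Phi=\cQ(\cG)$. You propose to obtain the $(\F,Q)$-martingale property by ``inserting $\pm Y\cdot(X_t-X_T)\in\cI_s(0)$''. The problem is that the subgradient inequality $\varphi(\xi)\le\Phi(\xi)$ is only available for $\xi\in\cC_b(\Om)$, since $\partial\Phi$ lives in the $\beta_0$-dual $\cM(\Om)$ of $\cC_b(\Om)$. For $t<T$ the coordinate map $X_t$ is \emph{not} $S^*$-continuous, so $Y\cdot(X_t-X_T)$ is neither continuous nor bounded, and you cannot feed it into the subgradient inequality. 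The paper circumvents this by replacing $X_t$ with the time-average $X^\ve_t=\ve^{-1}\int_t^{t+\ve}X_{u\wedge T}\,du$, which \emph{is} $S$-continuous; it then tests with $\ell_{Y,\ve}=\ve^{-1}\int_t^{t+\ve}\frac{Y}{|X_u|+1}\cdot(X_u-X_T)\,du\in\cC_1(\Om)$, shows $\Phi(\ell_{Y,\ve})\le 0$ via Lemma~\ref{c.integral}, passes to the limit $\ve\downarrow 0$ using Fatou, and finally invokes a separate lemma (Lemma~\ref{l.martingale}) to conclude that testing against \emph{continuous} $\cF_t$-measurable $Y$ suffices for the martingale property. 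The same continuity issue means that $\gamma\in\cG\subset\cC_{q,p}(\Om)$ need not be bounded, so ``$\E_\varphi[\gamma]\le 0$ directly'' also requires a truncation argument (Step~6 of Proposition~\ref{p.subdifferential}).

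A secondary inconsistency: you carry out Fenchel--Moreau on $\Phi(\cdot;\cI_s(\cG))$, but the localization Theorem~\ref{t.local} you invoke is proved for $\Phi(\cdot;\cI(\cG))$. Its Step~2 builds $H_n=(H_n^k)_{k\in\N}\in\cH\setminus\cH_s$ by taking a $\liminf$ over countably many simple integrands to dominate the full union $O_n=\bigcup_{i,j}(\Om\setminus K_n^{i,j})$; no single simple integrand does this. So the $\beta_0$-continuity argument needs $\cI(\cG)$ from the outset, and your remark that ``the enlargement from $\cH_s$ to $\cH$ accommodates the tail-hedging'' at the $\cU_p$ stage mislocates where general integrands first enter. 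For the extension $\cC_b\to\cU_b$, note that the monotone convergence you want on the dual side, namely $\inf_n\sigma_{\cQ(\cG)}(\xi_n)=\sigma_{\cQ(\cG)}(\eta)$ for $\xi_n\downarrow\eta$, is exactly an exchange of $\sup_Q$ and $\inf_n$ and hence \emph{is} the minimax step; it requires the weak$^*$ compactness of $\cQ(\cG)$ (Corollary~\ref{c.compact}), which you should make explicit.
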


The proof is given in Section \ref{sec:proofs}. If $\cQ(\cG)$ is empty,
by convention $\sigma_{\cQ(\cG)}$ is identically equal to minus infinity, in which case
both sides of the above equalities are 
equal to minus infinity; see Corollary \ref{c.alt}.
Counter-examples
of Section \ref{sec:counter}
show that  in general  $\cI_s(\cG)$ could be smaller
than $\cI(\cG)$ and \reff{e.dual2} does not hold in general with $\cI(\cG)$.
 
\section{Properties of $\cQ(\cG)$}
\label{sec:measures}
Recall $X_*$ in \reff{e.power}.  If  $\cQ(\cG)$ is empty, all results of this section  hold trivially.
For $\xi \in \cB(\Om)$, we define for every constant $c \ge 0$, 
\be
\label{e.xic}
\xi^c(\om):= (c \wedge \xi(\om))  \vee (-c), \quad  \om \in \Om.
\ee

\begin{lem}
\label{l.est} 
$\lim_{c \to \infty} \sigma_{\cQ(\cG)}(\xi^c)
=\sigma_{\cQ(\cG)}(\xi)$ for all $\xi \in \cB_p(\Om)$.
\end{lem}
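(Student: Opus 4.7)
The key tool is the uniform moment bound \reff{e.power}: $\sup_{Q\in\cQ(\cG)}\E_Q[X_*^q] \le c^*_q<\infty$. If $\cQ(\cG)$ is empty, both sides equal $-\infty$ by convention and there is nothing to prove, so assume $\cQ(\cG)\neq\emptyset$. Since $\xi\in\cB_p(\Om)$, there exists a constant $M$ with
\[
|\xi(\om)|\le M\bigl(1+\|\om\|_\infty^p\bigr)\le M\bigl(1+X_*^p(\om)\bigr),\quad \om\in\Om.
\]
Because $\om$ is c\`adl\`ag, $X_*(\om)<\infty$ pointwise, so $\xi$ is finite-valued. The plan is to exploit the strict gap between the exponents $p<q$ to convert this growth bound into a uniform tail estimate, and then pass to the supremum over $Q\in\cQ(\cG)$.

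First I would raise the inequality to the power $q/p>1$ and use $(a+b)^{q/p}\le 2^{q/p-1}(a^{q/p}+b^{q/p})$ to get $|\xi|^{q/p}\le C(1+X_*^q)$ for some constant $C$ depending on $M,p,q$. Taking expectation under any $Q\in\cQ(\cG)$ and invoking \reff{e.power} yields the uniform bound
\[
K:=\sup_{Q\in\cQ(\cG)}\E_Q\bigl[|\xi|^{q/p}\bigr]\le C(1+c^*_q)<\infty.
\]
Next, I would produce a Markov-type tail estimate: on the event $\{|\xi|>c\}$ one has $|\xi|=|\xi|\cdot 1\le |\xi|\,(|\xi|/c)^{q/p-1}=c^{1-q/p}|\xi|^{q/p}$, so
\[
\E_Q\bigl[|\xi|\,\one_{\{|\xi|>c\}}\bigr]\le c^{1-q/p}\,\E_Q\bigl[|\xi|^{q/p}\bigr]\le K\,c^{1-q/p}.
\]
Since $q>p$, the exponent $1-q/p$ is strictly negative, so this bound tends to $0$ as $c\to\infty$ \emph{uniformly in $Q\in\cQ(\cG)$}; this is the single pressure point where the standing assumption $p<q$ is used.

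Finally, I would observe that $|\xi-\xi^c|=(|\xi|-c)^+\le |\xi|\,\one_{\{|\xi|>c\}}$, which combined with the previous step gives
\[
\bigl|\sigma_{\cQ(\cG)}(\xi)-\sigma_{\cQ(\cG)}(\xi^c)\bigr|\le \sup_{Q\in\cQ(\cG)}\E_Q\bigl[|\xi-\xi^c|\bigr]\le K\,c^{1-q/p}\xrightarrow[c\to\infty]{}0,
\]
which is the desired convergence. The existence of $\E_Q[\xi]$ as a finite number is guaranteed by the H\"older-type consequence $\E_Q[|\xi|]\le K^{p/q}$, so no definitional issue with $\infty-\infty$ arises. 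The argument is essentially a uniform-integrability statement, and I do not anticipate any serious obstacle; the only subtle point is the use of the strict inequality $p<q$ to get uniform (rather than merely pointwise in $Q$) tail decay.
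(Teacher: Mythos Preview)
Your proof is correct and follows essentially the same approach as the paper: both bound $\sup_{Q\in\cQ(\cG)}\E_Q[|\xi-\xi^c|]$ by a quantity of order $c^{1-q/p}$ via a Markov-type tail estimate exploiting the gap $p<q$, and then invoke sub-additivity of $\sigma_{\cQ(\cG)}$. The only cosmetic difference is that you route the estimate through the uniform bound on $\E_Q[|\xi|^{q/p}]$, whereas the paper works directly with $X_*^p$ and $X_*^q$ and the inclusion $\{|\xi|\ge c\}\subset\{X_*^p\ge c/c_0\}$.
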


\begin{proof}
Fix $Q \in \cQ(\cG)$ and $\xi \in \cB_p(\Om)$.
There exists
a constant $c_0>0$ so that
$|\xi(\om)| \le c_0 X_*^p(\om)$ whenever $|\xi(\om)| \ge c_0$.
Using \reff{e.power},
we  estimate that for $c\ge c_0$,
\begin{align*}
 \E_Q[\left|\xi -\xi^c\right|]
& \le  \E_Q[|\xi| \one_{\{|\xi| \ge c\}}]
\le c_0 \E_Q[X^p_* \one_{\{X^p_* \ge c/c_0\}}]\\
&\le c_0 \frac{\E_Q[X^q_* \one_{\{X^p_* \ge c/c_0\}}]}{(c/c_0)^{q/p - 1}}
 \le \frac{c_0^{q/p} c^*_q}{c^{q/p -1}}.
\end{align*}
Hence, by sub-additivity,
$$
\left| \sigma_{\cQ(\cG)}(\xi) - \sigma_{\cQ(\cG)}(\xi^c) \right| 
\le 
\sigma_{\cQ(\cG)}\left(\left|\xi -\xi^c\right|\right)
\le \sup_{Q\in \cQ(\cG)}\ \E_Q[\left|\xi -\xi^c\right|]
\le \frac{c_0^{q/p} c^*_q}{c^{q/p-1}}.
$$
\end{proof}

\begin{lem}
\label{l.mart}  
For every $H \in \cH$, $t \in [0,T]$, and $Q \in \cQ(\cG)$,
$\E_Q\left[ (H \cdot X)_t \right] \le 0$.
Consequently, $\E_Q[\ell] \le 0$
for every  $\ell \in \cI(\cG)$ and $Q \in \cQ(\cG)$.
\end{lem}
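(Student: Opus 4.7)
The plan is to prove $\E_Q[(H \cdot X)_t] \le 0$ in two stages that mirror the construction of $\cH$, and then deduce the consequence for $\ell \in \cI(\cG)$ by linearity.

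\emph{Stage 1 (simple integrands).} For $H = (\tau_n, h_n)_{n \in \N} \in \cH_s$, I would work with the partial sums
\[
S_m := (H \cdot X)_{\tau_m \wedge t} = \sum_{n=0}^{m-1} h_n \cdot (X_{\tau_{n+1} \wedge t} - X_{\tau_n \wedge t}).
\]
Each summand is in $\cL^1(\Om, Q)$ by H\"older with conjugate exponents $q/(q-1)$ and $q$, using $|h_n| \le C(1 + X_*^{q-1})$ from $h_n \in \cB_{q-1}(\Om)^d$ and $X_* \in \cL^q(\Om, Q)$ from \reff{e.power}. Since $X_* \in \cL^1(\Om,Q)$, each stopped process $X^{\tau_{n+1} \wedge t}$ is a uniformly integrable martingale; optional sampling combined with the $\cF_{\tau_n}$-measurability of $h_n$ (reduced to $\cF_{\tau_n \wedge t}$-measurability after multiplying by $\one_{\{\tau_n \le t\}}$, on whose complement the increment vanishes) gives $\E_Q[S_m] = 0$ for every $m$. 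Admissibility supplies $\lambda \in \cB_q^+(\Om) \subset \cL^1(\Om, Q)$ with $S_m \ge -\lambda$ for all $m$; since $S_m(\omega) \to (H \cdot X)_t(\omega)$ pointwise (eventually constant once $\tau_m(\omega) \ge T$), Fatou applied to $S_m + \lambda \ge 0$ delivers $\E_Q[(H \cdot X)_t] \le 0$.

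\emph{Stage 2 and the consequence.} For $H = (H^k)_{k \in \N} \in \cH$, the uniform lower bound $(H^k \cdot X)_t \ge -\Lambda$ with $\Lambda \in \cB_q^+(\Om) \subset \cL^1(\Om, Q)$ lets me apply Fatou to $(H^k \cdot X)_t + \Lambda \ge 0$; combined with $\E_Q[(H^k \cdot X)_t] \le 0$ from Stage 1, this yields simultaneously $(H \cdot X)_t \in \cL^1(\Om, Q)$ and $\E_Q[(H \cdot X)_t] \le 0$. For $\ell = \gamma + (H \cdot X)_T \in \cI(\cG)$ the bound $\gamma^+ \le c(1 + X_*^q)$ from $\gamma^+ \in \cC_q(\Om)$ together with $X_* \in \cL^q$ gives $\gamma \in \cL^1(\Om, Q)$, so linearity yields $\E_Q[\ell] = \E_Q[\gamma] + \E_Q[(H \cdot X)_T] \le 0$, with $\E_Q[\gamma] \le 0$ by the definition of $\cQ(\cG)$.

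The main obstacle is Stage 1: the measurability mismatch between $h_n \in \cF_{\tau_n}$ and the stopped coordinate $X_{\tau_n \wedge t} \in \cF_{\tau_n \wedge t}$ requires the standard $\one_{\{\tau_n \le t\}}$ trick, and one must also verify the $\cL^1$ integrability of each summand via the pairing of $\cB_{q-1}$ with $\cL^q$. Once these are in hand, the admissibility clauses in both $\cH_s$ and $\cH$ supply the integrable lower bound on which Fatou's lemma at each stage depends; admissibility is precisely what prevents the negative parts from being uncontrolled.
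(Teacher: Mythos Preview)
Your proposal is correct and follows essentially the same approach as the paper: establish $\E_Q[(H\cdot X)_{\tau_m\wedge t}]=0$ for each $m$ using the martingale property, then pass to the limit via Fatou with the admissibility bound $-\lambda\in\cL^1$, and repeat the Fatou step for the outer $\liminf$ over $k$ using the bound $-\Lambda$. The paper is terser---it simply asserts $\E_Q[\ell^m_t]=0$ from $h_n\in\cB_{q-1}(\Om)^d$ and the martingale property---while you spell out the H\"older pairing, optional sampling, and the $\one_{\{\tau_n\le t\}}$ reduction, but the architecture is identical.
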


\begin{proof}  Fix $Q \in \cQ(\cG)$ and
$H=(\tau_n,h_n)_{n \in \N}\in \cH_s$.  
For $m \ge 1$, set
$$
\ell^{m}_t:=\left(H\cdot X\right)_{\tau_m \wedge t}
=\sum_{n=0}^{m-1} h_n \cdot
(X_{\tau_{n+1}\wedge t}-X_{\tau_n\wedge t}),
\quad t \in [0,T].
$$
Since by definition each $h_n \in \cB_{q-1}(\Om)^d$ 
and $X$ is an $(\F,Q)$-martingale, we have
$E_Q[\ell^{m}_t] =0$.  By the admissibility of $H$,
there exists $\lambda \in \cB_q^+(\Om)$ such that
$\ell^m_t \ge -\lambda$ for each $m$ and $ t \in [0,T]$.
Therefore, by Fatou's Lemma and \reff{e.power},
$$
\E_Q \left[ (H \cdot X)_t \right] \le 
\liminf_{m \to \infty} \E_Q\left[ \ell^{m}_t \right] =0
\quad \mbox{for all } t \in [0,T].
$$
Let $H=(H^k)_{k \in \N}\in \cH$.  Then, by definition each $H^k \in \cH_s$ and
by the above result $\E_Q \left[ (H^k \cdot X)_t \right] \le 0$.
Again by admissibility, there exists $\Lambda \in \cB_q^+(\Om)$ so that
$(H^k \cdot X)_t \ge -\Lambda$ for each $k \ge 1, t \in [0,T]$. 
By Fatou's Lemma,
for  $t \in [0,T]$,
$$
\E_Q \left[ (H \cdot X)_t \right] 
=  \E_Q \left[ \liminf_{k \to \infty}(H^k \cdot X)_t \right] \le
\liminf_{k \to \infty} \E_Q\left[ (H^k \cdot X)_t \right]   \le 0.
$$
The final
statement follows directly from the definitions.
\end{proof}

\begin{lem}%\label{l.fatou-mart} 
For every $\ell \in \hcI(\cG)$ and $Q \in \cQ(\cG)$, 
$\E_Q[\ell] \le 0$. Therefore,
\be
\label{e.lower}
\sigma_{\cQ(\cG)}(\xi)
\le \Phi(\xi;\hcI(\cG)) \le \Phi(\xi;\cI(\cG)) \quad \mbox{for all } \xi \in \cB(\Om).
\ee
\end{lem}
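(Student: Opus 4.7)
The plan is to prove the first claim by a standard minimality argument on the Fatou-closure, and then deduce \reff{e.lower} directly from the definition of $\Phi$.

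For the first statement, I fix $Q \in \cQ(\cG)$ and define
\[
\cA := \bigl\{ \ell \in \cB(\Om) : \ell \ge -\lambda \text{ for some } \lambda \in \cB_q^+(\Om),\text{ and } \E_{Q'}[\ell] \le 0 \text{ for all } Q' \in \cQ(\cG) \bigr\}.
\]
I will show that $\cA$ contains $\cI(\cG)$ and is closed under the Fatou operation. Minimality of $\hcI(\cG)$ will then force $\hcI(\cG) \subset \cA$, which is exactly the desired inequality.

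For the containment $\cI(\cG) \subset \cA$, the expectation bound $\E_{Q'}[\ell]\le 0$ is Lemma~\ref{l.mart}, while the lower bound follows by writing $\ell = \gamma + (H\cdot X)_T$ with $\gamma \in \cC_{q,p}(\Om)$ and $(H\cdot X)_T \ge -\Lambda$ for some $\Lambda \in \cB_q^+(\Om)$: since $\gamma^- \in \cC_p(\Om) \subset \cB_q^+(\Om)$, the function $\gamma^- + \Lambda$ belongs to $\cB_q^+(\Om)$ and serves as a uniform lower bound. For Fatou-closedness of $\cA$, suppose $\{\ell_n\} \subset \cA$ with a uniform bound $\ell_n \ge -\lambda$, $\lambda \in \cB_q^+(\Om)$. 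Then $\liminf_n \ell_n \ge -\lambda$, so the lower-bound clause is preserved. Since $\lambda \in \cB_q^+(\Om)$ gives $\E_{Q'}[\lambda]<\infty$ via \reff{e.power}, Fatou's lemma applied to the non-negative sequence $\ell_n + \lambda$ and subtraction of the finite quantity $\E_{Q'}[\lambda]$ yield
\[
\E_{Q'}\bigl[\liminf_n \ell_n\bigr] \le \liminf_n \E_{Q'}[\ell_n] \le 0
\]
for every $Q' \in \cQ(\cG)$, so $\liminf_n \ell_n \in \cA$.

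For \reff{e.lower}, the right-hand inequality is immediate since $\cI(\cG) \subset \hcI(\cG)$ enlarges the feasible set in the infimum defining $\Phi$. For the left-hand one, take any $c \in \R$ and $\ell \in \hcI(\cG)$ with $c + \ell \ge \xi$, and any $Q \in \cQ(\cG)$: by the first part, $\E_Q[\ell]$ is well-defined (thanks to the lower bound in $-\cB_q^+(\Om)$) and bounded above by $0$, so monotonicity of $\E_Q$ and the convention $\infty-\infty = -\infty$ give $\E_Q[\xi] \le c + \E_Q[\ell] \le c$. Taking the infimum over admissible $(c,\ell)$ produces $\E_Q[\xi] \le \Phi(\xi;\hcI(\cG))$, and then the supremum over $Q$ finishes the argument.

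The only nontrivial point is the careful book-keeping of the lower-bound property, which is essential to avoid $\infty-\infty$ ambiguities when handling extended-real-valued elements of $\hcI(\cG)$; I expect this bookkeeping, rather than any analytic difficulty, to be the main thing to get right.
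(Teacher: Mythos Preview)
Your proof is correct and follows essentially the same minimality argument as the paper: define a set containing $\cI(\cG)$ that is closed under the Fatou operation, then invoke the defining minimality of $\hcI(\cG)$. The only difference is that the paper uses the slightly larger set $\cK(\cG)=\{\xi\in\cB(\Om):\sigma_{\cQ(\cG)}(\xi)\le 0\}$ without your explicit lower-bound clause; your extra bookkeeping is harmless but not needed, since the paper's convention $\infty-\infty=-\infty$ already makes $\E_Q[\cdot]$ well-defined and monotone on all of $\cB(\Om)$.
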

\begin{proof}
Set 
$\cK(\cG) := \left\{ \xi \in \cB(\Om) :
\sigma_{\cQ(\cG)}(\xi) \le 0 \right\}$.
By Lemma \ref{l.mart}, $\cI(\cG) \subset \cK(\cG)$.
Consider a sequence $\{\xi_n\}_{n\in \N} \subset \cK(\cG)$
satisfying a uniform lower bound $\xi_n \ge -\lambda$
for some $\lambda \in \cB_q^+(\Om)$.
Then, by Fatou's Lemma and the uniform bound
\reff{e.power}, 
$$
\E_Q\left[ \liminf_{n \to \infty} \xi_n\right]
\le \liminf_{n \to \infty}  \E_Q[\xi_n] \le0 \quad \mbox{for all } Q \in \cQ(\cG).
$$
Hence, $\liminf_n \xi_n \in \cK(\cG)$.
Since $\hcI(\cG)$, by its definition,
is the smallest set of measurable functions
with this property
containing $\cI(\cG)$,
we conclude that $\hcI(\cG) \subset \cK(\cG)$.

Fix $\xi \in \cB(\Om)$.
Suppose that $\xi \le c +\ell$ for some $c \in \R$ and
$\ell \in \hcI(\cG)$.  Since $\hcI(\cG) \subset \cK(\cG)$,
$\E_Q[\xi] \le  \E_Q[c + \ell ] \le c$ for every
$Q \in \cQ(\cG)$.
Hence, $\sigma_{\cQ(\cG)}(\xi) \le c$.
Since $\Phi(\xi;\hcI(\cG))$
is the infimum of all such constants,
$\sigma_{\cQ(\cG)}(\xi)\le \Phi(\xi; \hcI(\cG))$.
The fact $\cI(\cG) \subset \hcI(\cG)$
implies that
$ \Phi(\xi; \hcI(\cG)) \le  \Phi(\xi; \cI(\cG))$.
\end{proof}

\section{Approximation results}
\label{sec:approximation}

\begin{lem}
\label{dslemma} 
$\hat{c}^*_q:=\Phi(X_*^q;\cI(\cG))<\infty$.
\end{lem}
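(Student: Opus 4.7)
My plan is to exhibit a finite constant $c$ and an element $\ell \in \cI(\cG)$ satisfying $c + \ell \ge X_*^q$ pointwise on $\Om$; the definition of $\Phi$ will then immediately yield $\Phi(X_*^q; \cI(\cG)) \le c < \infty$. Since $X$ takes values in $\R_+^d$ and the universal constant $q$ exceeds $1$, the key tool will be a pathwise version of Doob's $L^q$ maximal inequality for nonnegative \cad paths, of the type now standard in the model-free finance literature.

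Applied coordinate by coordinate, such an inequality provides, for each $i = 1, \dots, d$, an admissible simple integrand $H^i \in \cH_s$ trading only in the $i$-th asset, together with a constant $C_q > 0$, such that pointwise on $\Om$
$$\sup_{t \in [0,T]} (X_t^i)^q \;\le\; C_q (X_T^i)^q + (H^i \cdot X)_T.$$
The strategy $H^i$ is built from stopping times at which $X^i$ attains successive new maxima; each integrand can be dominated by a constant times $(\sup_{s\le \tau_n} X_s^i)^{q-1}$, which places it in $\cB_{q-1}(\Om)$, while admissibility follows from the nonnegativity of $X^i$ and the structure of the breakout strategy. Summing over $i$ and using equivalence of the Euclidean and $\ell^\infty$ norms on $\R^d$ produces a single $H \in \cH_s$ and a constant $C > 0$ with
$$X_*^q \;\le\; C |X_T|^q + (H \cdot X)_T.$$
Combining with Assumption \ref{asm.1}, which gives $|X_T|^q \le c_q + \xi_q$ for some $\xi_q \in \cG$, yields
$$X_*^q \;\le\; C c_q + \bigl(C\xi_q + (H \cdot X)_T\bigr).$$
Since $\cG$ is a convex cone, $C\xi_q + (H\cdot X)_T \in \cI_s(\cG) \subset \cI(\cG)$, and therefore $\Phi(X_*^q; \cI(\cG)) \le C c_q < \infty$.

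The main technical hurdle is the pathwise Doob $L^q$ inequality together with the required admissibility and growth properties: one must construct a simple integrand in $\cH_s$ whose $X$-integral pathwise dominates $X_*^q - C|X_T|^q$, with integrands in $\cB_{q-1}(\Om)^d$ and a uniform lower bound on the running integral. In the one-dimensional nonnegative \cad setting this is by now classical, realized by an explicit ``buy on new highs'' strategy keyed to the running maximum; the vector-valued nonnegative case then reduces to the scalar one coordinate by coordinate, since a finite sum of admissible simple integrands remains in $\cH_s$.
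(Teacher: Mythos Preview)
Your proposal is correct and follows essentially the same route as the paper: a pathwise Doob $L^q$ inequality realized by an admissible simple strategy (the paper cites Schachermayer's discrete inequality and uses stopping times where $|X|^q$ increases by $1$), summed over coordinates, and then combined with Assumption~\ref{asm.1} to replace $|X_T|^q$ by $c_q+\xi_q$. The only point you leave implicit is ensuring that the ``buy on new highs'' strategy is genuinely a \emph{simple} integrand (finitely many trades for each $\omega$); the paper handles this via the unit-increment stopping times, which also produces the extra $+1$ in the final bound.
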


\begin{proof} For $N \in \N$,
$\{y_k\}_{k =0}^N\subset \R_+$ and $n \le N$, let
$y^*_n:=   \max_{0\leq k\leq n}y_k$.

{\em{Step 1.}}  
It is shown in \cite[Proposition~2.1]{schachermayer} that
$$
%\label{e.doob}
(y_N^*)^q + d_qy_0^q
\leq \sum_{n=0}^{N-1} h (y^*_n)(y_{n+1}-y_n)
+\left(d_q y_N\right)^q,
$$
where $d_q:= q/(q-1)$ and $h(y):=-qd_qy^{q-1}$
for $y \in \R_+$.

{\em{Step 2.}}
Set $\tau_0:=0$ and for each $\om \in \Om$ and $n\in \N$
define recursively
$$
\tau_n(\omega):=\inf\left\{t>\tau_{n-1}(\omega) : |X_t(\om)|^q > |X_{\tau_{n-1}}(\om)|^q + 1 \right\} \wedge T.
$$
Then, the
$\tau_n$'s are stopping times.
For $\om \in \Om$, $i = 1,\ldots,d$, $n=0,1,2,\dots$, set
$$
h^{*,i}_n(\om):=
h\left(\max_{0\leq k\leq n}\ X^i_{\tau_k}(\om) \right),
\quad 
h^*_n(\om):=
\left(h^{*,1}_n(\om),\dots,
h^{*,d}_n(\om)\right). 
$$
It is clear that $h^*_n \in \cB_{q-1}(\Om)^d$
and therefore  $H^*:=(\tau_n,h^*_n)_{n \in \N}$ is a simple integrand.

{\em{Step 3.}} We claim that $H^*$ is
admissible.  Indeed, fix $t \in [0,T]$,  $\om \in \Om$, $i =1, \dots, d$, and set
$y_n:= X^i_{\tau_n\wedge t}(\omega)$.  For $k \in \N$, set
$$
\tilde{n}=\tilde{n}(\om,t,k):= 
\sup\left\{ m : \tau_m(\om) \leq  t\right\}
\wedge (k-1).
$$ 
Then, for 
$n \le  \tilde{n}$, $y_n=X_{\tau_n}^i$ and therefore,
$h_n^{*,i}(\om) = h(y^*_n)$. For $\tilde{n} < n <k$,
$X^i_{\tau_{n+1}\wedge t} 
=X^i_{\tau_{n}\wedge t} = X^i_t$ and $y_{\tilde{n}+1}
=X^i_{\tau_k\wedge t}$.
By Step 1,
\begin{align*}
\sum_{n=0}^{k-1} h^{*,i}_n
(X^i_{\tau_{n+1}\wedge t}-X^i_{\tau_n\wedge t})
&= \sum_{n=0}^{\tilde{n}} h(y_n^*) (y_{n+1} -y_n)
\ge (y^*_{\tilde{n}+1})^q -(d_q y_{\tilde{n}+1})^q
\\
& = \sup_{ n \le k} \left( X^i_{\tau_n\wedge t}\right)^q
 -(d_q X^i_{\tau_k\wedge t})^q.
\end{align*}
Hence, for every $ t \in [0,T]$
and integer $k$,
\begin{align}
\nonumber
(H^*\cdot X)_{\tau_k\wedge t}  &
\ge 
\sum_{i \le d} \sup_{ n \le k} \left( X^i_{\tau_n\wedge t}\right)^q
- \sum_{i \le d}  (d_qX^i_{\tau_k \wedge t})^q\\
\label{e.15}
&\ge 
\sum_{i \le d} \sup_{ n \le k} \left( X^i_{\tau_n\wedge t}\right)^q
-  c^*\ \left|X_{\tau_k \wedge t}\right|^q
\ge -  c^* X_*^q,
\end{align}
for some constant $c^*$ depending only on $d$ and $q$.
Hence, $H^*$  is admissible.

{\em{Step 4.}} We let $t=T$ in \reff{e.15}
and send  $k$ to infinity to obtain
$$
\sum_{i \le d} \sup_{ n} \left( X^i_{\tau_n\wedge T}\right)^q
\le
(H^*\cdot X)_{T} + c^*\ \left|X_T\right|^q.
$$
Choose a constant $\hat{c}^*$ so that
for  all $y=(y^1,\ldots,y^d) \in \R_+^d$,
$ |y|^q \le \hat{c}^*  \sum_{i \le d} |y^i|^q$.
Let $c_q, \xi_q$ be as in  Assumption \ref{asm.1}.
Then, 
\begin{align*}
0\le \sup_{ n} \left| X_{\tau_n \wedge T}\right|^q
& \le  \hat{c}^*  
\sum_{i \le d} \sup_{ n } \left( X^i_{\tau_n\wedge T}\right)^q
\le \hat{c}^* \left[ (H^*\cdot X)_T + c^*\left|X_T\right|^q\right]\\
& \le  
(\hat{c}^* H^*\cdot X)_T+ \hat{c}^* c^* (c_q + \xi_q)  =: \ell^* +\hat{c}^* c^*c_q.
\end{align*}
Since $H^* \in \cH_s$, $\xi_q \in \cG$ and $\cG$ is a cone, 
$\ell^*:=  (\hat{c}^* H^*\cdot X)_T +\hat{c}^*  c^* \xi_q \in \cI(\cG)$.

{\em{Step 5.}} By definition of the $\tau_n$'s, 
$$
X_*^q \le \sup_n\left|X_{\tau_n\wedge T}\right|^q + 1 \le \ell^* + \hat{c}^* c^* c_q+ 1,
$$
from which one obtains $\Phi(X_*^q;\cI(\cG)) \le \hat{c}^* c^* c_q + 1 < \infty$.
\end{proof}

\begin{cor}
\label{c.approximate}
For any convex cone $\cI \supset \cI(\cG)$ and $\xi \in \cB_p(\Om)$, one has
$\lim_{c \to \infty} 
\Phi(\xi^c; \cI)=  \Phi(\xi;\cI)$.
\end{cor}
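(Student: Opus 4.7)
The plan is to compare $\Phi(\xi;\cI)$ and $\Phi(\xi^c;\cI)$ via the truncation error $|\xi-\xi^c|$, then control that error using the fact that $\xi\in\cB_p(\Om)$ dominates a power of $X_*$ whose $\Phi$-value is already known to be finite from Lemma~\ref{dslemma}.

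First I would record the elementary properties of the functional $\Phi(\,\cdot\,;\cI)$ that are immediate from the definition as an infimum over a convex cone $\cI$: it is monotone, positively homogeneous, and sub-additive, i.e.\ $\Phi(\xi_1+\xi_2;\cI)\le\Phi(\xi_1;\cI)+\Phi(\xi_2;\cI)$ (since if $c_i+\ell_i\ge\xi_i$ with $\ell_i\in\cI$, then $(c_1+c_2)+(\ell_1+\ell_2)\ge\xi_1+\xi_2$ and $\ell_1+\ell_2\in\cI$). I would also note that $\cI\supset\cI(\cG)$ forces $\Phi(\,\cdot\,;\cI)\le\Phi(\,\cdot\,;\cI(\cG))$, so Lemma~\ref{dslemma} gives $\Phi(X_*^q;\cI)\le\hat c_q^*<\infty$.

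Second, I would reproduce the pointwise estimate that appears inside the proof of Lemma~\ref{l.est}: since $\xi\in\cB_p(\Om)$ there exists $c_0>0$ with $|\xi(\om)|\le c_0 X_*^p(\om)$ whenever $|\xi(\om)|\ge c_0$, and hence for every $c\ge c_0$
$$
|\xi-\xi^c| \;\le\; |\xi|\one_{\{|\xi|>c\}} \;\le\; c_0\, X_*^p\,\one_{\{X_*^p>c/c_0\}} \;\le\; \frac{c_0^{q/p}}{c^{q/p-1}}\,X_*^q,
$$
the last step using $X_*^{q-p}>(c/c_0)^{(q-p)/p}$ on the indicator set. Combining monotonicity and positive homogeneity with Lemma~\ref{dslemma} then yields
$$
\Phi(|\xi-\xi^c|;\cI) \;\le\; \frac{c_0^{q/p}}{c^{q/p-1}}\,\Phi(X_*^q;\cI) \;\le\; \frac{c_0^{q/p}\,\hat c_q^*}{c^{q/p-1}} \;\xrightarrow[c\to\infty]{}\; 0.
$$

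Finally, from the trivial pointwise bounds $\xi\le\xi^c+|\xi-\xi^c|$ and $\xi^c\le\xi+|\xi-\xi^c|$, sub-additivity and monotonicity give
$$
|\Phi(\xi;\cI)-\Phi(\xi^c;\cI)| \;\le\; \Phi(|\xi-\xi^c|;\cI),
$$
and the right-hand side tends to zero, which is the claim. I do not expect any serious obstacle: the only mildly delicate point is handling the possibility that $\Phi(\xi;\cI)\in\{-\infty,+\infty\}$, but the one-sided sub-additivity inequalities above are valid in the extended real sense and immediately force $\Phi(\xi^c;\cI)$ to have the same infinite value as $\Phi(\xi;\cI)$ once $\Phi(|\xi-\xi^c|;\cI)$ is finite, which it is for $c\ge c_0$. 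The genuine content of the statement is entirely captured by Lemma~\ref{dslemma}: the existence of the global $\Phi$-finite majorant $X_*^q$.
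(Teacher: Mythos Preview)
Your proposal is correct and follows essentially the same route as the paper: bound $|\xi-\xi^c|$ pointwise by a constant multiple of $X_*^q$ using the growth condition from $\cB_p(\Om)$, invoke Lemma~\ref{dslemma} together with $\cI\supset\cI(\cG)$ to make $\Phi(|\xi-\xi^c|;\cI)\to 0$, and then use sub-additivity in both directions. The paper uses the marginally sharper bound $(|\xi|-c)\one_{\{|\xi|\ge c\}}$ in place of your $|\xi|\one_{\{|\xi|>c\}}$ and keeps the two one-sided inequalities separate rather than combining them into $|\Phi(\xi)-\Phi(\xi^c)|\le\Phi(|\xi-\xi^c|)$, but these differences are cosmetic.
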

\begin{proof}   
Fix  $\xi \in \cB_p(\Om)$.  There exists $c_0>0$ so that 
$|\xi| \le c_0 X_*^p$ whenever $|\xi| \ge c_0$. 

{\em{Step 1.}}
For $c \ge c_0$, 
\begin{align*}
(|\xi| -c) \one_{\{|\xi| \ge c\}} & \le  c_0 X_*^p \ \one_{\{ X_*^p  \ge c/c_0\}}
\le  \frac{c_0 X_*^q}{(c /c_0)^{q/p-1}} \one_{\{ X_*^p  \ge c/c_0\}} \le \frac{c_0^{q/p}}{c^{q/p-1}} X_*^q.
\end{align*}
Since $\cI$ includes $\cI(\cG)$, $\Phi((|\xi| -c) \one_{\{|\xi| \ge c\}};\cI) \le 
c_0^{q/p} c^{1 - q/p}\Phi(X_*^q;\cI(\cG))$, which in view of Lemma \ref{dslemma}, gives
$\limsup_{c \to \infty}\ \Phi((|\xi| -c) \one_{\{|\xi| \ge c\}};\cI) \le 0$.

{\em{Step 2.}}
Since $|\xi - \xi^c| \le 
(|\xi| -c) \one_{\{|\xi|  \ge c\}}$, one obtains from sub-additivity,
$$
\Phi(\xi^c;\cI) \le
 \Phi( \xi^c-\xi;\cI)+\Phi(\xi;\cI)  \le 
\Phi((|\xi| -c) \one_{\{|\xi| \ge c\}};\cI) +\Phi(\xi;\cI),
$$
which by the previous step, gives
$\limsup_{c \to \infty}\
\Phi( \xi^c ;\cI)  \le 
\Phi(\xi;\cI)$.

{\em{Step 3.}}
Similarly,
$$
\Phi(\xi;\cI) \le \Phi( \xi-\xi^c;\cI)+\Phi(\xi^c;\cI)  \le 
\Phi((|\xi| -c) \one_{\{|\xi| \ge c\}};\cI) + \Phi(\xi^c;\cI),
$$
and therefore, $\Phi(\xi;\cI) \le \liminf_{c \to \infty} \Phi( \xi^c ;\cI)$.
\end{proof}

It is a direct consequence of the definition that $\Phi(\xi; \cI(\cG)) \le \|\xi\|_\infty$ for any $\xi \in \cB_b(\Om)$. 
In particular, $\Phi(0; \cI(\cG)) \le 0$.

\begin{cor} \label{c.alt}
We have the following alternatives:
 \begin{itemize}
 \item[{\rm (i)}]
If $\Phi(0; \cI(\cG))=0$, then
$\left|\Phi(\xi; \cI(\cG))\right| \le \|\xi\|_\infty$ for all $\xi \in \cB_b(\Om)$.
\item[{\rm (ii)}]
If $\Phi(0; \cI(\cG))<0$, then $\cQ(\cG)$ is empty,
and $\Phi(\cdot;\cI(\cG)) \equiv\Phi(\cdot;\hcI(\cG))\equiv -\infty$
on $\cB_p(\Om)$.
In particular,
\reff{e.dual1}  and \reff{e.dual2} hold trivially.
\end{itemize}
\end{cor}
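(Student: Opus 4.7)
The plan is to treat the two alternatives separately. Throughout I will exploit three very basic but decisive features: that $\cI(\cG)$ is a convex cone containing $0$ (hence $\Phi(\cdot;\cI(\cG))$ is subadditive and positively homogeneous), that $\Phi(0;\cI(\cG))\le 0$ always, and that from Lemma \ref{l.mart} every $\ell \in \cI(\cG)$ satisfies $\E_Q[\ell]\le 0$ for every $Q\in\cQ(\cG)$.

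For (i), the upper bound $\Phi(\xi;\cI(\cG))\le\|\xi\|_\infty$ comes straight from the definition by taking $c=\|\xi\|_\infty$ and $\ell=0\in\cI(\cG)$. For the lower bound, I would use subadditivity of $\Phi(\cdot;\cI(\cG))$ (which is immediate from the cone property of $\cI(\cG)$, since feasible pairs $(c_i,\ell_i)$ add): applied at $0=\xi+(-\xi)$ this gives $\Phi(\xi;\cI(\cG))+\Phi(-\xi;\cI(\cG))\ge\Phi(0;\cI(\cG))=0$, and hence $\Phi(\xi;\cI(\cG))\ge-\Phi(-\xi;\cI(\cG))\ge-\|\xi\|_\infty$ by the upper bound applied to $-\xi$.

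For (ii), I would first show $\cQ(\cG)=\emptyset$. The assumption $\Phi(0;\cI(\cG))<0$ yields some $c<0$ and $\ell\in\cI(\cG)$ with $\ell\ge -c>0$. If some $Q\in\cQ(\cG)$ existed, Lemma \ref{l.mart} would force $0\ge\E_Q[\ell]\ge -c>0$, a contradiction. By the convention stated after Theorem \ref{t.main}, $\sigma_{\cQ(\cG)}\equiv -\infty$.

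It remains to push $\Phi(\xi;\cI(\cG))$ down to $-\infty$ on $\cB_p(\Om)$. The first step is to note that $\Phi(\xi;\cI(\cG))<\infty$ for every $\xi\in\cB_p(\Om)$: since $X_*^p\le 1+X_*^q$, any $\xi\in\cB_p(\Om)$ is dominated by $K(1+X_*^q)$ for some constant $K$, and by Lemma \ref{dslemma} $\Phi(X_*^q;\cI(\cG))<\infty$, so monotonicity and the trivial translation $\Phi(a+\eta;\cI(\cG))=a+\Phi(\eta;\cI(\cG))$ give the required finiteness. Next, if $c_0+\ell_0\ge\xi$ with $\ell_0\in\cI(\cG)$, then for every $\lambda\ge 0$ the element $\ell_0+\lambda\ell$ lies in the cone $\cI(\cG)$ and satisfies $(c_0+\lambda c)+(\ell_0+\lambda\ell)\ge\xi$, whence $\Phi(\xi;\cI(\cG))\le c_0+\lambda c$; letting $\lambda\to\infty$ and using $c<0$ forces $\Phi(\xi;\cI(\cG))=-\infty$. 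Finally, the inclusion $\cI(\cG)\subset\hcI(\cG)$ gives $\Phi(\cdot;\hcI(\cG))\le\Phi(\cdot;\cI(\cG))\equiv-\infty$, so both are $\equiv-\infty$ on $\cB_p(\Om)$, matching $\sigma_{\cQ(\cG)}\equiv-\infty$ and making \eqref{e.dual1}--\eqref{e.dual2} trivial.

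The only non-routine piece is finding a finite upper bound for $\Phi(\xi;\cI(\cG))$ on $\cB_p(\Om)$, but that reduces immediately to Lemma \ref{dslemma}; everything else is bookkeeping with the cone structure and Lemma \ref{l.mart}.
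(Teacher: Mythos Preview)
Your proof is correct and follows essentially the same approach as the paper. Part (i) is identical up to cosmetic rearrangement: the paper writes $\Phi(\xi)=-\|\xi\|_\infty+\Phi(\xi+\|\xi\|_\infty)\ge -\|\xi\|_\infty+\Phi(0)$ using translation and monotonicity, which is just your subadditivity argument unpacked.

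For part (ii) there is a small organizational difference worth noting. The paper first shows $\Phi(0;\cI(\cG))=-\infty$ via the cone trick, then for $\xi\in\cB_b(\Om)$ uses $\Phi(\xi)\le\|\xi\|_\infty+\Phi(0)=-\infty$, and finally extends to $\cB_p(\Om)$ by invoking Corollary~\ref{c.approximate}. You instead go straight to $\cB_p(\Om)$: you use Lemma~\ref{dslemma} to secure a finite super-replication of $\xi$, then run the cone trick directly at $\xi$. Both routes ultimately rest on Lemma~\ref{dslemma} (the paper's Corollary~\ref{c.approximate} uses it too), so the dependency graph is the same; your version just avoids the intermediate stop at $\cB_b(\Om)$. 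The paper's deduction that $\cQ(\cG)=\emptyset$ comes from \eqref{e.lower} rather than Lemma~\ref{l.mart} directly, but \eqref{e.lower} is itself an immediate consequence of that lemma, so again no real difference.
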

\begin{proof}
First, suppose that $\Phi(0; \cI(\cG))=0$ and let $\xi \in \cB_b(\Om)$. Since
$\xi + \|\xi\|_\infty \ge 0$, one has
$$
\Phi(\xi; \cI(\cG)) = - \|\xi\|_\infty+ \Phi(\xi+ \|\xi\|_\infty; \cI(\cG))
 \ge 
- \|\xi\|_\infty+ \Phi(0; \cI(\cG)) = - \|\xi\|_\infty.
$$

Now assume that $\Phi(0; \cI(\cG))<0$.  Then, there exist $c<0$, $\ell  \in \cI(\cG)$ such that
$c+\ell \ge 0$.  Also, for any constant
$\lambda >0$, $\lambda (c+ \ell) \ge 0$. Since $\cI(\cG)$ is a 
cone, $  \lambda \ell \in \cI(\cG)$ and consequently,
 $\Phi(0; \cI(\cG)) \le c \lambda$.  As $\lambda>0$ above was arbitrary,
we have $\Phi(0; \cI(\cG))=-\infty$ and 
$$
\Phi(\xi; \cI(\cG)) \le \|\xi\|_\infty+ \Phi(\xi- \|\xi\|_\infty; \cI(\cG)) \le
\|\xi\|_\infty+ \Phi(0 ; \cI(\cG)) =- \infty.
$$
This shows that $-\infty\leq \sigma_{\cQ(\cG)}(\cdot) \le \Phi(\cdot;\hcI(\cG)) \le \Phi(\cdot; \cI(\cG)) 
\equiv -\infty$ on $\cB_b(\Om)$, and by Corollary~\ref{c.approximate}, also on 
$\cB_p(\Om)$.

Moreover, \reff{e.lower} implies that if $\cQ(\cG)$ is non-empty,
$\Phi(0; \hcI(\cG))=0$. Hence if $\Phi(0; \hcI(\cG))<0$, $\cQ(\cG)$ must be empty.
\end{proof}

For an $\R^d$-valued c\`adl\`ag process $Y$, set
\begin{equation*}%\label{e.int}
\ell_Y(\om):= \int_0^T \ Y_u(\om) \cdot (X_u(\om)-X_T(\om))\ du.
\end{equation*}

\begin{lem}
\label{c.integral}
Let $Y$ be  an $\R^d$-valued, adapted, c\`adl\`ag process.
Suppose that there exists $\lambda \in \cB_{q-1}(\Om)$
satisfying $|Y_u| \le {\lambda}$ for every
$u \in [0,T]$.  
Then, $\ell_Y \in \cI(0)$ and
for any quotient set $\cI$ containing
$\cI(0)$, $\Phi(\ell_Y; {\cI}) \le 0$
\end{lem}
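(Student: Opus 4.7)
The plan is to realize $\ell_Y$ as the terminal value of a pathwise stochastic integral, obtained as the pointwise limit of Riemann sums built from simple integrands, and then to invoke admissibility estimates to conclude $\ell_Y \in \cI(0)$. Once this is done, the bound $\Phi(\ell_Y;\cI) \le 0$ follows immediately by taking $c = 0$ and $\ell = \ell_Y \in \cI(0)\subset \cI$ in the definition of $\Phi$.

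The starting point is the purely algebraic identity
\[
\ell_Y(\om) = \int_0^T Y_u(\om) \cdot X_u(\om) \, du - Z_T(\om) \cdot X_T(\om), \qquad Z_u(\om) := \int_0^u Y_v(\om)\, dv.
\]
Since $Y$ is adapted and c\`adl\`ag, hence progressively measurable, $Z$ is continuous, $\F$-adapted, and uniformly bounded by $T\lambda$, so each $Z_t$ is $\cF_t$-measurable and belongs to $\cB_{q-1}(\Om)^d$.

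Fix a sequence of partitions $\pi_n:0=t^n_0<\cdots<t^n_{N_n}=T$ with $|\pi_n|\to 0$ and form the simple integrands $H^n:=(t^n_k,-Z_{t^n_k})_{k}$. Abel summation by parts yields, for $\tau^n_k \wedge t \in [t^n_j,t^n_{j+1})$,
\[
(H^n\cdot X)_{\tau^n_k\wedge t} \;=\; -Z_{t^n_j}\cdot X_{\tau^n_k\wedge t} \;+\; \sum_{i=1}^{j}(Z_{t^n_i}-Z_{t^n_{i-1}})\cdot X_{t^n_i},
\]
whose absolute value is bounded by $2T\lambda X_*$. Since $\lambda\in\cB_{q-1}$ implies $\lambda X_*\le C(1+X_*^q)$, the function $\Lambda:=2T\lambda X_*$ lies in $\cB_q^+(\Om)$; thus each $H^n\in\cH_s$ and the sequence $H:=(H^n)_{n\in\N}$ is an admissible integrand in $\cH$ with common bound $\Lambda$. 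At $t=T$ the same identity reads
\[
(H^n\cdot X)_T \;=\; -Z_{t^n_{N_n-1}}\cdot X_T \;+\; \int_0^{t^n_{N_n-1}} Y_u\cdot X_{t^n_{k(u,n)}}\,du,
\]
where $t^n_{k(u,n)}$ is the right endpoint of the partition cell containing $u$. The first term converges to $-Z_T\cdot X_T$ by continuity of $Z$; since $t^n_{k(u,n)}\downarrow u$ and $X$ is right-continuous, $X_{t^n_{k(u,n)}}\to X_u$ pointwise on $[0,T)$, and dominated convergence (with dominant $\lambda X_*$) yields the limit $\int_0^T Y_u\cdot X_u\,du$. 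Hence $(H^n\cdot X)_T\to \ell_Y$ pointwise, so $\liminf_n(H^n\cdot X)_T=\ell_Y=(H\cdot X)_T\in\cI(0)$, and the conclusion $\Phi(\ell_Y;\cI)\le 0$ follows.

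The main obstacle is bookkeeping: one has to keep track of the $\cB_{q-1}$/$\cB_q^+$ integrability classes through the summation-by-parts estimate and verify that the Riemann-sum convergence is genuinely pathwise rather than in some $L^p$ sense, which is precisely where the right-continuity of $X$ together with the (absolute) continuity of $Z$ is used.
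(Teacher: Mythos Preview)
Your proof is correct and follows essentially the same idea as the paper's: both construct approximating simple integrands with weights $-Z_{t^n_k}$, where $Z_t=\int_0^t Y_v\,dv$, and obtain $\ell_Y$ as the pointwise limit of the resulting discrete integrals. The only cosmetic difference is that the paper takes the \emph{discrete} cumulative sum $h^k_n:=-(T/k)\sum_{j\le n}Y_{jT/k}$ instead of the exact integral $-Z_{t^n_k}$; this yields the clean identity $(H^k\cdot X)_T=(T/k)\sum_{n}Y^k_n\cdot(X^k_n-X_T)$, which is immediately recognizable as a Riemann sum for $\ell_Y$, so the paper avoids your integration-by-parts step and the two separate limits. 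Your route through the identity $\ell_Y=\int_0^T Y_u\cdot X_u\,du-Z_T\cdot X_T$ is equally valid and has the advantage of making the admissibility bound $|(H^n\cdot X)_t|\le 2T\lambda X_*\in\cB_q^+$ explicit, a point the paper leaves to the reader.
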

\begin{proof} 
For $k\in \N$ and $n=0,\ldots,k$ set
$\tau^k_n:= n T/k$, $Y^k_n:= Y_{\tau^k_n}$,
$X^k_n:= X_{\tau^k_n}$, 
 $h^k_0:=-(T/k)Y_0$ and $h_n^k:= h^k_{n-1}-(T/k)Y^k_n $ for $n \ge 1$.
Since $\lambda \in  \cB_{q-1}(\Om)$, 
the simple integrand $H^k:=(\tau^k_n,h^k_n)_{n=0}^{k}$
is admissible.
Moreover,
$$
(H^k\cdot X)_T= \sum_{n=0}^{k-1} h^k_n \cdot (X_{n+1}^k -X_{n}^k)
= \frac{T}{k}\, \sum_{n=0}^{k-1} Y^k_n \cdot (X_{n}^k-X_T).
$$
Let $H:=(H^k)_{k\in \N}$.  Since both $Y$ and $X$ are c\`adl\`ag,
$$
(H\cdot X)_T = \lim_{k \to \infty} (H^k\cdot X)_T =\ell_Y.
$$
One can directly verify that $H \in \cH$.  Hence, $\ell_Y \in \cI(0)$.
\end{proof}

\section{Continuity on $\cC_b(\Om)$}
\label{sec:Cb}

We use the compact notation
$\Phi(\cdot)= \Phi(\cdot;\cI(\cG))$.

\begin{lem}\label{l.bound} 
$\limsup_{c \to \infty} \ \Phi(\one_{\{ X_*  >   c\}})
\le 0$.
\end{lem}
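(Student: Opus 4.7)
The plan is to use a Chebyshev-type pointwise bound together with the finiteness result of Lemma \ref{dslemma}. Specifically, I will exploit the fact that $\Phi(\cdot;\cI(\cG))$ is monotone (directly from its definition as an infimum of constants $c$ with $c+\ell\ge\xi$) and positively homogeneous (stated explicitly in the introduction).

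First I would observe the pointwise inequality
\[
\one_{\{X_* > c\}}(\omega) \le \frac{X_*^q(\omega)}{c^q} \quad \text{for every } \omega \in \Omega \text{ and every } c > 0,
\]
which holds because the right-hand side is non-negative everywhere, and on $\{X_* > c\}$ it exceeds $1$.

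Next, by monotonicity and positive homogeneity of $\Phi(\cdot;\cI(\cG))$,
\[
\Phi(\one_{\{X_* > c\}}) \le \Phi\!\left(\frac{X_*^q}{c^q}\right) = \frac{1}{c^q}\,\Phi(X_*^q).
\]
Lemma \ref{dslemma} provides $\Phi(X_*^q) = \hat{c}^*_q < \infty$, so sending $c \to \infty$ gives $\limsup_{c\to\infty}\Phi(\one_{\{X_*>c\}}) \le 0$, as desired.

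There is no real obstacle here; the only step that required substantive work was establishing $\hat c^*_q<\infty$, which was already done in Lemma \ref{dslemma} via Schachermayer's pathwise Doob inequality. Everything else follows from elementary properties of $\Phi$ that are immediate from its definition as an infimal convolution.
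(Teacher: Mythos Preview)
Your proof is correct, and it takes a genuinely different---and shorter---route than the paper.

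The paper argues coordinate by coordinate: for each $i$ it builds an explicit simple integrand (buy one unit of asset $i$ at the first hitting time of level $c$, hold until $T$) to show $\Phi\bigl((c-X^i_T)\one_{\{X^i_*>c\}}\bigr)\le 0$, combines this with the pointwise inequality $c\,\one_{\{X^i_*>c\}}\le X^i_T+(c-X^i_T)\one_{\{X^i_*>c\}}$ to get $\Phi(\one_{\{X^i_*>c\}})\le \Phi(X^i_T)/c$, and then patches the coordinates together via $\{X_*>\sqrt{d}\,c\}\subset\bigcup_i\{X^i_*>c\}$ and sub-additivity. Only at the very end does it invoke Lemma~\ref{dslemma} to conclude that $\Phi(X^i_T)<\infty$.

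You bypass the explicit hedging construction and the coordinate decomposition entirely by using the Chebyshev bound $\one_{\{X_*>c\}}\le c^{-q}X_*^q$ and applying Lemma~\ref{dslemma} directly. Since Lemma~\ref{dslemma} is established earlier (in Section~\ref{sec:approximation}) and the paper's own proof already appeals to it, there is no circularity, and your argument is strictly more economical. As a bonus you obtain the faster rate $\hat c^*_q/c^q$ rather than $O(1/c)$, though the lemma only needs convergence to zero. The paper's construction has some independent interest as a pathwise Doob-type maximal inequality, but for the stated lemma your approach is preferable.
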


\begin{proof}
Fix $c>0$, $i\in\{1,\dots,d\}$ and
set $X^i_*:= \sup_{t \in [0,T]} X^i_t$.
Since $X^i_T \ge 0$,
$$
c \one_{\{ X^i_*  >   c\}}(\om)
\leq X^i_T(\om) 
+(c-X^i_T(\om)) \one_{\{ X^i_* >  c\}}(\om) \quad \mbox{for all } \om \in \Om.
$$
Set $h_1^i =-1$, $h^j_1=0$ for $j\neq i$,
$\tau_0(\omega)
:=\inf\{t \ge 0 : X^i_t\in (c,\infty)\}
\wedge T$, $\tau_1:=T$ and let $H$ be the corresponding
integrand.  Then, $(H\cdot X)_T=  (X^i_{\tau_1}-X^i_T)$.
By right-continuity, we have $X^i_{\tau_1} \ge c$ on the set $\{X^i_* >  c\}$ 
and $\tau_1=T$ on its complement. Consequently,
$(H\cdot X)_T \ge  (c-X^i_T) \one_{ \{ X^i_* > c\}}$
and
$\Phi\left((c-X^i_T)\ \one_{\{ X^i_*>  c\}}\right) \le 0$.
Therefore,
$\Phi(\one_{\{  X^i_* >   c\}})  \le   \ \Phi\left(X^i_T\right)/c$, which, by Lemma \ref{dslemma}, converges to
zero as $c$ tends to infinity.  
Since $\{X_*>\sqrt{d} c\} \subset \cup_i \{  X^i_* >   c\}$,
the claim of the lemma follows from the sub-additivity of $\Phi$.
\end{proof}

\begin{dfn}
\label{d.up}
{\rm{For $\om \in \cD([0,T];\R_+)$, $t \in [0,T]$, and
$a<b$, the number of}} up-crossings up to $t$,
$U_t^{a,b}(\om),$ {\rm{is the largest 
integer
$n$ for which one can find 
$0\leq t_1<\cdots<t_{2n}\leq t$ 
such that $\om(t_{2k-1}) <  a$ and $\om(t_{2k}) > b$ 
for $k=1,\dots,n$. }}

\end{dfn}

For $\omega\in\cD([0,T];\mathbb{R}_+^d)$, 
we set $U_t^{a,b,i}(\om):=U_t^{a,b}(\om^i)$.

\begin{lem}
\label{l.vovk}
For $0< a<b$ and $i=1,\ldots,d$, 
there exists  $H^{a,b,i} \in \cH_s$ 
such that
$$
(H^{a,b,i}\cdot X)_t(\om) \geq -a + (b-a) 
U^{a,b,i}_t(\om) \quad \mbox{for all }  (t,\om) \in [0,T]\times \Om.
$$
\end{lem}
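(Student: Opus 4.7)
The plan is to implement a pathwise version of Doob's upcrossing strategy. I would first define $\F$-stopping times recursively by $\rho_0 := 0$ and, for $k \geq 1$,
\[
\rho_{2k-1} := \inf\{t \geq \rho_{2k-2} : X^i_t < a\} \wedge T, \qquad \rho_{2k} := \inf\{t \geq \rho_{2k-1} : X^i_t > b\} \wedge T.
\]
Since $\{X^i < a\}$ and $\{X^i > b\}$ are open in $\R^d_+$, each $\rho_n$ is an $\F$-stopping time. Set $h_n := e_i$ for odd $n$ and $h_n := 0$ for even $n$; as deterministic constants they are $\cF_{\rho_n}$-measurable and lie in $\cB_{q-1}(\Om)^d$. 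Since every c\`adl\`ag path admits only finitely many upcrossings of $[a,b]$ on $[0,T]$, the $\rho_n$'s are eventually equal to $T$, meeting the termination condition, so this defines a simple integrand $H := H^{a,b,i}$.

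Only odd indices contribute, so $(H \cdot X)_t = \sum_{k \geq 1}\bigl(X^i_{\rho_{2k} \wedge t} - X^i_{\rho_{2k-1} \wedge t}\bigr)$. Right-continuity of $X^i$ combined with the open-set structure of the hitting times gives the standard bounds: $X^i_{\rho_{2k-1}} \leq a$ whenever $\rho_{2k-1} < T$, and $X^i_{\rho_{2k}} \geq b$ whenever $\rho_{2k} < T$.

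To obtain the target inequality I would fix $\om \in \Om$, $t \in [0,T]$, set $M := \max\{k \geq 0 : \rho_{2k}(\om) \leq t\}$, and let $U := U^{a,b,i}_t(\om)$ with witnesses $0 \leq s_1 < \cdots < s_{2U} \leq t$ from Definition \ref{d.up}. A short induction using $X^i_{s_{2k-1}} < a$ and $X^i_{s_{2k}} > b$ yields $\rho_{2k-1} \leq s_{2k-1}$ and $\rho_{2k} \leq s_{2k}$ for all $k \leq U$; in particular $M \geq U$. For $k \leq M$, the contribution is $\geq b-a$, with the edge case $\rho_{2k} = T$ forcing $s_{2k} = T$ and thus $X^i_{\rho_{2k}} = X^i_{s_{2k}} > b$ directly. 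This gives at least $M(b-a) \geq U(b-a)$. For $k = M+1$, by maximality of $M$ we have $\rho_{2(M+1)} > t$; if $\rho_{2(M+1)-1} > t$ too, the remaining term vanishes, otherwise $\rho_{2(M+1)-1} < T$, so $X^i_{\rho_{2(M+1)-1}} \leq a$ and the term is $X^i_t - X^i_{\rho_{2(M+1)-1}} \geq -a$ by $X^i_t \geq 0$. All subsequent terms vanish. Summing yields $(H \cdot X)_t \geq -a + (b-a)\, U^{a,b,i}_t(\om)$.

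The same computation applied to $(H \cdot X)_{\rho_m \wedge t}$, which is just the truncated partial sum, gives the uniform lower bound $-a$, so $H \in \cH_s$ with constant admissibility bound $\lambda \equiv a \in \cB_q^+(\Om)$. The main obstacle to anticipate is the careful bookkeeping at the possibly incomplete terminal up-period and at degenerate cases where a hitting time equals $T$; both are absorbed into the $-a$ constant by exploiting $X^i \geq 0$ and $a > 0$.
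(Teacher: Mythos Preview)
Your construction is identical to the paper's---the same stopping times (the paper writes $I_k=[0,a)$ and $(b,\infty)$, which on $\R_+$ are your open hitting sets) and the same buy-low/sell-high integrand---and the paper simply asserts the key inequality, so your added detail is welcome.

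One wrinkle to fix: your index $M := \max\{k \geq 0 : \rho_{2k}(\om) \leq t\}$ is $+\infty$ when $t=T$, since all $\rho_n$ eventually equal $T$; in that case the claim ``for $k \le M$ the contribution is $\ge b-a$'' fails for large $k$ (both $\rho_{2k-1}$ and $\rho_{2k}$ equal $T$ and the contribution is $0$), and your edge-case argument via the witnesses $s_{2k}$ only covers $k \le U$. The repair is cosmetic and already implicit in what you wrote: the terms for $k \le U$ each contribute at least $b-a$ by your witness/induction argument, at most one of the remaining terms is negative and that one is $\ge -a$ (since $X^i \ge 0$ and $X^i_{\rho_{2k-1}} \le a$ whenever $\rho_{2k-1}<T$), and all other terms are $\ge 0$; summing gives the inequality directly without invoking $M$.
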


\begin{proof}
For $k \ge 1$, set $I_k:=[0,a)$ if $k$ is an odd integer  
and $I_k:=(b,\infty)$ if $k$ is even, and $\tau_0:= 0$.
Recursively define a sequence of random times  by
$$
\tau_k(\omega):=\inf\left\{t\geq \tau_{k-1}(\om)\, :\, X^i_t(\om) \in I_k
\right\} \wedge T,
$$
where the infimum over an empty set is infinity.
Since  $X$ is c\`adl\`ag and $I_k$ is open,  
$\tau_k$'s are $\F$-stopping times. 
Define $h_k=(h^1_k,\ldots,h^d_k)$ as follows:
$h^i_k:=1$ when $k$ is odd, $h^i_k:=0$ for $k$ even and $h^j_k=0$
for $j\neq i$.
Let  $H^{a,b,i}$ be the corresponding simple integrand.
It is clear that for every $t \in [0,T]$, $\om \in \Om$,
$(H^{a,b,i}\cdot X)_t(\om) \ge - a
+(b-a)U_t^{a,b,i}(\om)$. Hence, $H^{a,b,i} \in \cH_s$. 
\end{proof}

\subsection{Localization}%\label{ss.lsc} 

\begin{thm}
\label{t.local} There exists an increasing sequence of compact subsets 
$\{K_n\}_{n \in \N}$ of $\Om$  satisfying,
$$
\lim_{n \to \infty} \Phi(\one_{\Omega \setminus K_n}) \le 0.
$$
\end{thm}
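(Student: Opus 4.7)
The plan is to exploit Jakubowski's characterization of $S$-compactness (recalled in the Appendix): a subset of $\cD([0,T];\R^d_+)$ is relatively $S$-compact iff it is uniformly bounded in sup-norm and, for every $a<b$ and every coordinate $i$, the number of up-crossings $U_T^{a,b,i}$ is uniformly bounded on the set. Accordingly, I build $K_n$ as the intersection of a single sup-norm constraint with countably many up-crossing constraints along the countable family $\{(a_k,b_k)\}_{k\in\N}$ of rational pairs with $0<a_k<b_k$. Since $\|\cdot\|_\infty$ and each $U_T^{a,b,i}$ are $S$-lower semicontinuous, all these constraints cut out $S$-closed sets, so $K_n$ will be closed and the compactness criterion will apply.

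Concretely, fix $\ve_n\downarrow 0$. By Lemma~\ref{l.bound} I choose $C_n\uparrow\infty$ with $\Phi(\one_{\{X_*>C_n\}})\le\ve_n/2$. For each $(k,i)$, Lemma~\ref{l.vovk} gives $H^{a_k,b_k,i}\in\cH_s$ with $(H^{a_k,b_k,i}\cdot X)_T\ge -a_k+(b_k-a_k)U_T^{a_k,b_k,i}$; setting $\alpha_k^{n,i}:=\tfrac{1}{(N_{n,k,i}+1)(b_k-a_k)}$ and $\beta_k^{n,i}:=\alpha_k^{n,i}a_k$, this rearranges to the pointwise bound
\[
\one_{\{U_T^{a_k,b_k,i}>N_{n,k,i}\}}\le\frac{U_T^{a_k,b_k,i}}{N_{n,k,i}+1}\le \beta_k^{n,i}+(\alpha_k^{n,i}H^{a_k,b_k,i}\cdot X)_T.
\]
I pick $N_{n,k,i}$, increasing in $n$, large enough that $\sum_{k,i}\beta_k^{n,i}\le\ve_n/2$, and set
\[
K_n:=\{\om\in\Om \,:\, \|\om\|_\infty\le C_n,\ U_T^{a_k,b_k,i}(\om)\le N_{n,k,i}\text{ for all }k,i\}.
\]
Then $\{K_n\}_n$ is increasing by construction. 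For an arbitrary $(a,b,i)$ with $a<b$ one has $U_T^{a,b,i}\le U_T^{a_k,b_k,i}$ whenever $a\le a_k<b_k\le b$ (an up-crossing of a larger interval is also one of any inner interval), so the rational family controls every $U_T^{a,b,i}$ on $K_n$, and Jakubowski's criterion yields the desired compactness.

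The main obstacle is to bound $\Phi$ on the indicator of the countable union $\bigcup_{k,i}\{U_T^{a_k,b_k,i}>N_{n,k,i}\}$, because $\Phi$ is only \emph{finitely} sub-additive. My plan is to package all the scaled Vovk integrands into a single admissible integrand $H^n\in\cH$. The finite partial sums $H^n_m:=\sum_{k\le m,\,i\le d}\alpha_k^{n,i}H^{a_k,b_k,i}$ are still in $\cH_s$, and each scaled integrand has uniform lower bound $(\alpha_k^{n,i}H^{a_k,b_k,i}\cdot X)_t\ge -\beta_k^{n,i}$, so they aggregate to the bound $(H^n_m\cdot X)_t\ge -\ve_n/2$, uniform in $m$ and $t$. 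Hence $H^n:=(H^n_m)_{m\in\N}\in\cH$ with constant admissibility bound $\Lambda\equiv\ve_n/2\in\cB_q^+(\Om)$, and summing the displayed inequality over $k,i$ while passing to $m\to\infty$ (monotone convergence on the left, the lim-inf definition of $(H^n\cdot X)_T$ on the right) yields
\[
\one_{\bigcup_{k,i}\{U_T^{a_k,b_k,i}>N_{n,k,i}\}}\le\sum_{k,i}\one_{\{U_T^{a_k,b_k,i}>N_{n,k,i}\}}\le\tfrac{\ve_n}{2}+(H^n\cdot X)_T.
\]
Since $(H^n\cdot X)_T\in\cI(0)\subset\cI(\cG)$, this gives $\Phi(\one_{\bigcup_{k,i}\{\cdots\}})\le\ve_n/2$. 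Combining with the sup-norm bound by the ordinary finite sub-additivity of $\Phi$ finally delivers $\Phi(\one_{\Om\setminus K_n})\le\ve_n\to 0$, as required.
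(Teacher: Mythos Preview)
Your proposal is correct and follows essentially the same route as the paper's own proof: both define $K_n$ by a sup-norm cap together with countably many up-crossing caps indexed by a dense family of rational pairs, invoke Jakubowski's compactness criterion, control the sup-norm part via Lemma~\ref{l.bound}, and---crucially---handle the countable union of up-crossing excesses by summing the scaled Vovk integrands of Lemma~\ref{l.vovk} into finite partial sums $H^n_m\in\cH_s$ with a uniform lower bound, so that $H^n:=(H^n_m)_m\in\cH$ yields a single element of $\cI(\cG)$ dominating the indicator. The paper uses the explicit thresholds $c^n_j=2^{j+n}(a_j\vee1)/(b_j-a_j)$, while you choose $N_{n,k,i}$ abstractly to make $\sum_{k,i}\beta_k^{n,i}\le\varepsilon_n/2$; this is the only difference, and it is purely cosmetic.
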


\begin{proof}  We complete the proof in several steps.

{\em{Step 1}}.
Let $D$ be a countable dense subset of $(0,\infty)$ and $\{(a_j,b_j) : j \in \N\}$ an enumeration 
of the countable set $\{(a,b) \in D \times D : 0<a<b\}$. For all $n \in \mathbb{N}$, define
$$
K^{i,j}_n:= \left\{\om \in \Om : U^{a_j,b_j,i}_T(\omega)\leq c^j_n 
\right\}, \ \ \widehat{K}_n:= \bigcap_{i=1}^d\bigcap_{j\in \N}\ K^{i,j}_n, \ \
K_n:= B_n \cap \widehat{K}_n,
$$
where $c^n_j:=2^{j+n} (a_j \vee 1)/(b_j-a_j)$ and $B_n:=\{\omega \in \Om : X_*(\om) \leq n\}$.
Since $\Omega$ is $S$-closed, one obtains from \cite[Corollary 2.10]{jakubowski} that
$K^{ij}_n$ and $B_n$ are S-closed subsets of  $\cD([0,T];\R^d_+)$. Hence, all $K_n$ are $S$-compact 
and therefore also $S^*$-compact subsets of $\Omega$; see Appendix \ref{s.jakubowski} or \cite[Corollary~5.11]{stopo}. 
Moreover,
$$
\left(\Om \setminus K_n \right)\subset
O_n  \ \cup \ (\Om \setminus B_n), \quad
{\text{where}}\quad
O_n:= \bigcup_{i,j}\ (\Om \setminus K^{i,j}_n).
$$

{\em{Step 2}}.  
Let $H^{a,b,i}$ be as in  Lemma \ref{l.vovk}
and set $ H^{i,j}_{n}:= (c^n_j(b_j-a_j))^{-1} H^{a_j,b_j,i}$.
Then, for every $t \in [0,T]$,
$$
(H^{i,j}_n \cdot X)_t \ge
-\frac{a^j}{c^n_j(b_j-a_j)}+ \frac{U^{a_j,b_j,i}_t}{c^n_j}
\ge -2^{-(j+n)}+ \frac{U^{a_j,b_j,i}_t}{c^n_j}.
$$
Hence, $H^{i,j}_n \in \cH_s$ and also
$(H^{i,j}_n \cdot X)_T \ge
- 2^{-(j+n)}+ \one_{\Om \setminus K_n^{i,j}}$.

For $k\ge 1$, set
$H^k_n :=  \sum_{i=1}^d  \sum_{j=1}^k\ H^{i,j}_n$.
Then, for every $k \ge 1$ and $t \in [0,T]$,
$(H^k_n \cdot X)_t\ge - d \ 2^{-n}$.  
Hence, for each $n$, $H_n:=(H^k_n)_{k\in \N} \in \cH$ and 
$$
(H_n \cdot X)_T=\liminf_{k \to \infty} (H^k_n \cdot X)_T\ge  
\sum_{i=1}^d \sum_{j=1}^\infty
\left(\one_{\Omega \setminus K^{i,j}_n}-2^{-(j+n)}\right) \ge \one_{O_n} - d \ 2^{-n}.
$$
Therefore,  $\Phi(\one_{O_n}) \le d\ 2^{-n}$.
\vspace{4pt}

{\em{Step 3.}} By the previous steps and Lemma \ref{l.bound},
$$
\limsup_{n \to \infty} \Phi(\one_{\Omega \setminus K_n}) \le 
\limsup_{n \to \infty} \left(\Phi(\one_{\Omega \setminus B_n}) + \Phi(\one_{O_n}) 
\right)\le 0.
$$

Finally, since for each pair $(i,j)$, the sets $K^{i,j}_n$
are increasing in $n$, we conclude that $K_n$ is also increasing in $n$.
\end{proof}

\subsection{$\beta_0$-continuity}%\label{ss.dualcb}

\begin{prop}%\label{p.lsc} 
Suppose that
$\Phi(0) =0$. Then $\Phi$ is real-valued  and $\beta_0$-continuous on $\cC_b(\Om)$.
\end{prop}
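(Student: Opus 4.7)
The plan is to dispose of real-valuedness using the alternatives already established and then to parlay the localization Theorem~\ref{t.local} into a two-sided estimate that controls $\Phi$ on any $\beta_0$-neighborhood of the origin. Since $\Phi(0)=0$, Corollary~\ref{c.alt}(i) gives $|\Phi(\xi)|\le\|\xi\|_\infty<\infty$ for every $\xi\in\cC_b(\Om)$, so $\Phi$ is real-valued there; combined with the sub-additivity built into the definition of $\Phi$, this also yields the sup-norm Lipschitz bound $|\Phi(\xi)-\Phi(\zeta)|\le\|\xi-\zeta\|_\infty$. Because $\Phi$ is sub-linear (convex, positively homogeneous, and sub-additive), $\beta_0$-continuity on all of $\cC_b(\Om)$ reduces to $\beta_0$-continuity at the origin.

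The central step is the pointwise domination $\pm\xi\le \|\xi\one_{K_n}\|_\infty + \|\xi\|_\infty\one_{\Om\setminus K_n}$, which together with monotonicity, positive homogeneity, and the constant-translation identity $\Phi(c+\cdot)=c+\Phi(\cdot)$ (itself following from $\Phi(\pm c)\le \pm c$ via $0\in\cI(\cG)$, combined with sub-additivity applied twice) delivers the key estimate
\begin{equation*}
|\Phi(\xi)|\le \|\xi\one_{K_n}\|_\infty + \|\xi\|_\infty\,\Phi(\one_{\Om\setminus K_n})
\qquad\mbox{for all } \xi\in\cC_b(\Om),\; n\in\N,
\end{equation*}
the lower bound coming from $\Phi(\xi)\ge -\Phi(-\xi)$ after applying the same domination to $-\xi$. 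The link to the $\beta_0$-topology is that $\one_{K_n}\in\cB_0^+(\Om)$, because the $S^*$-compactness of $K_n$ from Theorem~\ref{t.local} makes $\{\one_{K_n}>\varepsilon\}$ either equal to $K_n$ or empty for every $\varepsilon>0$, hence relatively compact.

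To conclude $\beta_0$-continuity, I take a net $(\xi_\alpha)\subset\cC_b(\Om)$ with $\xi_\alpha\to\xi$ in $\beta_0$ and set $\zeta_\alpha:=\xi_\alpha-\xi$. Invoking the standard fact that $\beta_0$-convergent nets in $\cC_b(\Om)$ are sup-norm bounded (a property of the substrict topology on a completely regular Hausdorff space, see Jarchow or Sentilles), one obtains $M<\infty$ with $\|\zeta_\alpha\|_\infty\le M$. Given $\varepsilon>0$, Theorem~\ref{t.local} lets me pick $n$ with $M\,\Phi(\one_{\Om\setminus K_n})<\varepsilon/2$, and $\beta_0$-convergence applied to the test function $\one_{K_n}\in\cB_0^+(\Om)$ gives $\|\zeta_\alpha\one_{K_n}\|_\infty<\varepsilon/2$ eventually. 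The key estimate applied to $\pm\zeta_\alpha$ then delivers $|\Phi(\pm\zeta_\alpha)|<\varepsilon$, and sub-additivity yields $-\Phi(-\zeta_\alpha)\le\Phi(\xi_\alpha)-\Phi(\xi)\le\Phi(\zeta_\alpha)$, so $|\Phi(\xi_\alpha)-\Phi(\xi)|<\varepsilon$. The hardest ingredient is therefore this topological boundedness input: without $\sup_\alpha\|\zeta_\alpha\|_\infty<\infty$, the $\|\xi\|_\infty$-factor in the key estimate cannot be absorbed by the vanishing tail $\Phi(\one_{\Om\setminus K_n})$, and the rest of the argument is a clean combination of sub-linearity and monotonicity of $\Phi$ with the localization result.
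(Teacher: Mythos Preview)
Your key estimate $|\Phi(\xi)|\le \|\xi\one_{K_n}\|_\infty + \|\xi\|_\infty\,\Phi(\one_{\Om\setminus K_n})$ is correct and is essentially the same decomposition the paper uses. The gap is in your final step: the claim that $\beta_0$-convergent nets in $\cC_b(\Om)$ are sup-norm bounded is \emph{false}. For a counterexample take $\Om=\N$ with the discrete topology, so that $\cB_0^+(\Om)=c_0^+$. Index a net by pairs $(F,N)$ with $F\subset c_0^+$ finite and $N\in\N$, ordered by $F\subset F'$ and $N\le N'$; for each $(F,N)$ pick $n$ with $(\max_{\eta\in F}\eta)(n)<N^{-2}$ and set $\xi_{(F,N)}:=N e_n$. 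Then $\|\xi_{(F,N)}\eta\|_\infty<1/N$ once $\eta\in F$, so $\xi_{(F,N)}\to 0$ in $\beta_0$, yet $\|\xi_{(F,N)}\|_\infty=N$ is unbounded on every tail. What \emph{is} true (and is perhaps what you recall from Sentilles) is that $\beta_0$-\emph{bounded} sets coincide with sup-norm bounded sets; but convergent nets in a non-metrizable locally convex space need not be bounded. Without the uniform bound $M$, the factor $\|\xi\|_\infty$ in your estimate cannot be absorbed, and the argument breaks down.

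The paper closes exactly this hole by combining the $K_n$'s into a single weight rather than handling them one at a time. After relabelling so that $\Phi(\one_{\Om\setminus K_k})\le 2^{-2k}$, it sets $\eta^*:=\sum_{k\ge 1}2^{-k}\one_{K_k\setminus K_{k-1}}\in\cB_0^+(\Om)$ and shows directly that $|\Phi(\xi)|\le 4\|\xi\eta^*\|_\infty$ for every $\xi\in\cC_b(\Om)$. Since $\|\cdot\,\eta^*\|_\infty$ is one of the generating seminorms, $\beta_0$-continuity follows immediately from the definition, with no net argument at all. The point is that a sub-linear functional is $\beta_0$-continuous precisely when it is dominated by a constant multiple of one of the seminorms $\|\cdot\,\eta\|_\infty$; your per-$n$ estimate fails to deliver this because of the floating $\|\xi\|_\infty$, and the weighted-sum construction of $\eta^*$ is what eliminates it.
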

\begin{proof} 
By Corollary \ref{c.alt}, $\Phi$ is real-valued and the compact sets constructed in
Theorem~\ref{t.local} 
satisfy $\Phi(\one_{\Omega \setminus K_n}) \downarrow 0$
as $n$ tends to infinity.  
Let $K_0$ be the empty set and by re-labelling, 
we may assume that
$\Phi(\one_{\Omega \setminus K_k}) \le 2^{-2k}$,
for every $k\ge 0$.
Define
$$
\eta^*:=  \sum_{k=1}^\infty \ 2^{-k} \one_{K_k\setminus K_{k-1}}.
$$
Since  on the complement of $K_{k-1}$,
$|\eta^*| \le 2^{-k}$, $\eta^* \in \cB_0(\Om)$.  
Fix an integer $n$ and
$\xi \in \cC_b(\Om)$.  Since
on $K_k\setminus K_{k-1}$, $\eta^* =2^{-k}$, on $K_k\setminus K_{k-1}$, $(\eta^*)^{-1} =2^{k}$, so, on $K_n= \cup_{k=1}^{n}(K_k\setminus K_{k-1})$,
$$
|\xi| \one_{K_n}= |\xi| \eta^*\ (\eta^*)^{-1}\one_{K_n}
 \le \|\xi\eta^*\|_\infty\ (\eta^*)^{-1}\one_{K_n}=  
\|\xi\eta^*\|_\infty\
\sum_{k=1}^{n} 2^k \ \one_{K_k\setminus K_{k-1}}.
$$
In view of the hypothesis $\Phi(0)=0$, 
$|\Phi(\xi\one_{K_n})| \le \Phi(|\xi|\one_{K_n})$ and
consequently,
\begin{align*}
\left|\Phi\left(\xi \one_{K_n}\right)\right|
&\le \|\xi\eta^*\|_\infty 
\sum_{k=1}^{n} 2^k\Phi\left(\one_{K_k\setminus K_{k-1}}\right)
\le \|\xi\eta^*\|_\infty 
\sum_{k=1}^{n} 2^k\Phi\left(\one_{\Om\setminus K_{k-1}}\right)\\
&\le \|\xi\eta^*\|_\infty  \sum_{k=1}^{n} 2^k 2^{-2(k-1)} \le 4  \|\xi\eta^*\|_\infty
= 4 \|\xi\|_{\eta^*}.
\end{align*}
Therefore, by Theorem~\ref{t.local},
$$
|\Phi(\xi)| \leq\limsup_{n\rightarrow\infty}
\left(|\Phi(\xi\one_{K_n})|
+\|\xi\|_\infty\Phi(\one_{\Omega\setminus K_n})\right)
\leq 4 \|\xi\eta^*\|_\infty.
$$

For $\xi, \zeta \in \cC_b(\Om)$, 
by sub-additivity,
$\Phi(\xi)= \Phi((\xi-\zeta)+\zeta) \le \Phi(\xi-\zeta) + \Phi(\zeta)$.
Hence, $\Phi(\xi) - \Phi(\zeta)
\le \Phi(\xi- \zeta) \le 4 \|(\xi - \zeta) \ \eta^*\|_\infty$.  
Switching the roles of $\xi$ and $\zeta$, we conclude that
$\left| \Phi(\xi) - \Phi(\zeta)\right|
\le 4 \|(\xi - \zeta) \ \eta^*\|_\infty$.
Since the $\beta_0$-topology is generated by the semi-norms 
$\| \,.\,  \eta\|_\infty$ for arbitrary $\eta \in \cB^+_0(\Om)$ and 
$\eta^* \in \cB^+_0(\Om)$, the above inequality yields that $\Phi$ is $\beta_0$-continuous on 
$\cC_b(\Om)$ (see Appendix \ref{sec:beta0} below).

\end{proof}

\subsection{Sub-differential}%\label{ss.sd}

\begin{prop}
\label{p.subdifferential}
$\cQ(\cG)=\partial\Phi:=\left\{\varphi \in \cM(\Om):
\varphi(\xi) \leq \Phi(\xi;\cI(\cG)),\  \xi \in \cC_b(\Om)\right\}$.
\end{prop}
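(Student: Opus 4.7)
\medskip
\noindent\textbf{Proof sketch.} The plan is to treat the two inclusions separately, the first being immediate and the second requiring an integrability bootstrap plus a martingale verification.

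The inclusion $\cQ(\cG)\subset\partial\Phi$ is quick. If $Q\in\cQ(\cG)$, then $Q$ is a Borel probability on the $T_6$-space $(\Om,S^*)$, hence automatically Radon, so $Q\in\cM(\Om)$. For any $\xi\in\cC_b(\Om)$ and any representation $c+\ell\ge\xi$ with $c\in\R$ and $\ell\in\cI(\cG)$, Lemma~\ref{l.mart} yields $\E_Q[\xi]\le c+\E_Q[\ell]\le c$; taking the infimum over such $(c,\ell)$ gives $\E_Q[\xi]\le\Phi(\xi;\cI(\cG))$.

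For $\partial\Phi\subset\cQ(\cG)$ I would fix $\varphi\in\partial\Phi$ and work in several steps; by Corollary~\ref{c.alt} one may assume $\Phi(0;\cI(\cG))=0$, since otherwise $\Phi\equiv-\infty$ on $\cC_b(\Om)$ forces $\partial\Phi=\emptyset=\cQ(\cG)$. First, $\varphi$ is a probability measure: positivity follows from $-\xi\le 0\in\cI(\cG)$ for $\xi\ge 0$ in $\cC_b(\Om)$, giving $\varphi(-\xi)\le\Phi(-\xi;\cI(\cG))\le 0$, while $\Phi(1)\le 1$ together with $\Phi(-1)=-1$ (Corollary~\ref{c.alt}) force $\varphi(1)=1$. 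Next, $q$-integrability: since $X_*^q$ is $S$-lower semicontinuous and $(\Om,S^*)$ is $T_6$, there is an increasing sequence $\xi_n\in\cC_b^+(\Om)$ with $\xi_n\uparrow X_*^q$, and $\varphi(\xi_n)\le\Phi(\xi_n;\cI(\cG))\le\Phi(X_*^q;\cI(\cG))=\hat c^*_q<\infty$ by Lemma~\ref{dslemma}, so monotone convergence gives $\E_\varphi[X_*^q]\le\hat c^*_q<\infty$. I then extend the defining inequality from $\cC_b(\Om)$ to $\cC_{q,p}(\Om)$: for $\xi\in\cC_{q,p}(\Om)$ the truncation $\xi^c$ from~\reff{e.xic} lies in $\cC_b(\Om)$ and satisfies $\varphi(\xi^c)\le\Phi(\xi^c;\cI(\cG))$; dominated convergence (using the $q$-integrability and the growth bound $|\xi|\le C(1+X_*^q)$) together with Corollary~\ref{c.approximate} lets me pass to the limit $c\to\infty$ to obtain $\varphi(\xi)\le\Phi(\xi;\cI(\cG))$ for every $\xi\in\cC_{q,p}(\Om)$. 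Applying this to $\gamma\in\cG\subset\cC_{q,p}(\Om)$---which satisfies $\gamma\in\cI(\cG)$ and hence $\Phi(\gamma;\cI(\cG))\le 0$---yields $\E_\varphi[\gamma]\le 0$.

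The martingale property is the main obstacle. The natural candidate variables $Y\cdot(X_T-X_t)\in\cI(0)$ for bounded $\cF_t$-measurable $Y$ are neither $S^*$-continuous nor bounded, so the inequality established so far does not apply to them directly. The plan is to invoke Lemma~\ref{c.integral}: for every bounded c\`adl\`ag adapted $Y$, the integral $\ell_Y=\int_0^T Y_u\cdot(X_u-X_T)\,du$ lies in $\cI(0)\subset\cI(\cG)$. Choosing $Y$ so that $\ell_Y$ becomes $S^*$-continuous with controlled growth---for instance by exploiting the $S^*$-continuity of the time integrals $\int_s^t X_u(\om)\,du$ recalled in Section~\ref{sec:set-up} together with a cancellation that removes the $X_T$-term---places $\ell_Y$ in $\cC_{q,p}(\Om)$, so the extended inequality gives $\varphi(\ell_Y)\le 0$; the substitution $Y\mapsto-Y$ upgrades this to $\varphi(\ell_Y)=0$. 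Feeding the resulting family of zero-mean random variables into the sufficient criterion for the martingale property recorded in the Appendix then yields that $X$ is an $(\F,\varphi)$-martingale. The delicate point is producing enough continuous test integrands $Y$---rich enough for the Appendix's criterion to fire, yet still with $\ell_Y\in\cC_{q,p}(\Om)$ so that the $\cC_{q,p}$-extension applies.
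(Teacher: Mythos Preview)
Your overall strategy coincides with the paper's: the easy inclusion via Lemma~\ref{l.mart}, positivity and normalisation of $\varphi$, a $q$-moment bound, an extension of the subdifferential inequality beyond $\cC_b$, the verification $\E_\varphi[\gamma]\le 0$, and the martingale property via Lemma~\ref{c.integral} combined with Lemma~\ref{l.martingale}. Two points deserve tightening.

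First, the extension ``to $\cC_{q,p}(\Om)$ via Corollary~\ref{c.approximate}'' is not quite right: that corollary is stated and proved only for $\xi\in\cB_p(\Om)$, and $\cG\subset\cC_{q,p}(\Om)\not\subset\cB_p(\Om)$ (the positive part may have genuine $q$-growth). For the symmetric truncation $\gamma^c$ you do not get $\Phi(\gamma^c)\to\Phi(\gamma)$ from the available estimates. The paper sidesteps this by using the one-sided truncation $\gamma_a:=\gamma\wedge a$: since $\gamma_a\le\gamma\in\cI(\cG)$ one has $\Phi(\gamma_a)\le\Phi(\gamma)\le 0$ by monotonicity, while $\gamma_a\in\cC_p(\Om)$ (its negative part is $\gamma^-\in\cC_p$ and its positive part is bounded), so the $\cC_p$-extension---which \emph{does} follow from Corollary~\ref{c.approximate}---applies to give $\E_\varphi[\gamma_a]\le 0$; dominated convergence with $|\gamma_a|\le c_\gamma(1+X_*^q)$ then finishes.

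Second, in the martingale step your remark about ``a cancellation that removes the $X_T$-term'' is a red herring: $X_T$ is $S$-continuous, so it causes no trouble. What you are missing is the concrete test family. The paper fixes $t\in[0,T)$ and an $\cF_t$-measurable $Y\in\cC_b(\Om)^d$ and sets
\[
\ell_{Y,\ve}:=\frac{1}{\ve}\int_t^{t+\ve}\frac{Y}{|X_u|+1}\cdot(X_u-X_T)\,du,
\]
which lies in $\cC_1(\Om)$ (the normalisation by $|X_u|+1$ keeps the integrand bounded by $\|Y\|_\infty(1+|X_T|)$) and, by Lemma~\ref{c.integral}, satisfies $\Phi(\ell_{Y,\ve})\le 0$. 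Using the $\cC_p$-extension and Fatou as $\ve\downarrow 0$ gives $\E_\varphi\bigl[\tfrac{Y}{|X_t|+1}\cdot(X_t-X_T)\bigr]=0$; removing the normalisation by dominated convergence (replace $Y$ by $Y[(|X_t|+1)\wedge c]$ and let $c\to\infty$) yields $\E_\varphi[Y\cdot(X_t-X_T)]=0$ for all such $Y$, which is exactly the hypothesis of Lemma~\ref{l.martingale}.
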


\begin{proof}
The lower bound \reff{e.lower} implies that 
$\cQ(\cG) \subset \partial\Phi$.  
To prove the opposite inclusion, fix
$Q \in \partial \Phi \subset \cM(\Om)$.
The monotonicity of $\Phi$ implies that
$Q\ge 0$.  Since $\Phi(c) \le c$ for every constant $c$,
we conclude that
 $Q \in \cP(\Om)$.
 
 {\em{Step 1.}} 
Let $\xi \in \cC^+(\Om)$, and define $\xi^c$ for $c \ge 0$ as in \reff{e.xic}.
Then, $\xi^c \le \xi$ and  by the defining property of $Q$, 
$\E_Q[\xi^c] \le\Phi(\xi^c) \le \Phi(\xi)$.
So, by monotone convergence,
$\E_Q[\xi] = \lim_{c \to \infty} \E_Q[\xi^c]
\le \Phi(\xi)$.

 {\em{Step 2.}} 
For $\ve>0$, set 
$$
X^\ve_t(\om):= \frac{1}{\ve} \int_{t}^{t+\ve} 
X_{u\wedge T} \ du, \ t \in [0,T].
$$
Since the map $X_T$ and time integrals are
$S$-continuous \cite{jakubowski},
$X^\ve$ is $S^*$-continuous.
Hence, for every $t \in [0,T]$,  $|X_t^\ve| \in \cC_1(\Om)$ and
$\left|X^\ve_t\right| \le X_*$, where $X_*$ is 
as in \reff{e.power}.
Also, $\lim_{\ve \to 0} X^\ve_{t}(\om)=X_t(\om)$
for every $\om \in \Om$. 
Fix $t \in [0,T]$ and choose $\xi =|X_t^\ve|^q$ 
in Step 1
 to obtain,
$\E_Q[|X_t^\ve|^q] \le \Phi(|X_t^\ve|^q) \le
\Phi(X_*^q)$.
By Fatou's Lemma,
\begin{equation*}
%\label{e.qmoment}
\E_Q[|X_t|^q] \le \liminf_{\ve \to 0} \E_Q[|X_{t}^\ve|^q]
\le \Phi(X^q_*) = \hat c^ *_q<\infty,
\end{equation*}
where $\hat c^ *_q$ is as in Lemma \ref{dslemma}.
Hence, $X_t \in \cL^q(\Om,Q)$ for every $t \in [0,T]$.

{\em{Step 3.}} 
Fix $t \in [0,T)$ and an $\cF_t$-measurable $Y \in \cC_b(\Om)^d$.
For $\ve \in (0,T-t]$, set
$$
 \ell_{Y,\ve}:=
\frac{1}{\ve} \int_t^{t+\ve} 
\frac{Y}{|X_u|+1} \cdot (X_u-X_T)\, du,\quad
{\text{and}}\quad
\ell_Y:=\frac{Y}{|X_t|+1} \cdot (X_t-X_T).
$$
Observe that $\ell_{Y,\ve} \in \cC_1(\Om)$,
$\lim_{\ve \to 0}
\ \ell_{Y,\ve}(\om)=
\ell_Y(\om)$, for all $\om \in \Om$  and
in view of 
Corollary \ref{c.integral}, $\Phi(\ell_{Y,\ve}) \le0$.
Moreover,
$$
\ell_{Y,\ve} \ge - \|Y\|_\infty \ [ 1+ |X_T|]
\quad
\Rightarrow
\quad
\ell_{Y,\ve}^c \ge - \|Y\|_\infty \ [ 1+ |X_T|] \in \cL^q(\Om,Q).
$$
Then, by Fatou's Lemma, 
$\E_Q[\ell_Y ] \le \liminf_{\ve \to 0} \E_Q[ \ell_{Y,\ve}]$.
We now use again   Fatou's Lemma,
the sub-differential inequality,
and Corollary \ref{c.approximate} to obtain
$\E_Q[\ell_{Y,\ve}] \le \liminf_{c \to \infty} \E_Q[ \ell_{Y,\ve}^c]
\le  \lim_{c \to \infty} \Phi( \ell_{Y,\ve}^c) = \Phi(\ell_{Y,\ve})\le 0$.
Since this  argument also holds for $-Y$, we conclude that
$\E_Q [\ell_Y]= 0$.

{\em{Step 4.}}
 Let $Y$ be as in the previous step.
For $c>0$, set $Y_c:= Y [(|X_t|+1)\wedge c]$.  
Since $X_t, X_T \in \cL^q(\Om,Q)$, by dominated convergence,
$$
\E_Q[Y \cdot(X_t-X_T)] 
= \lim_{c\to \infty}
\E_Q\left[\frac{Y_c}{|X_t|+1} \cdot (X_t-X_T)\right]  =0.
$$
The above equality, the integrability proved 
in Step 2 and Lemma \ref{l.martingale}
imply that $X$ is an $(\F,Q)$-martingale.
As in \reff{e.power}, this also implies that
$\E_Q \left[X_*^q \right] <\infty$.
  
{\em{Step 5.}}  Let $\xi \in \cC_p(\Om)$. Then, $|\xi|\le c_\xi (1+X_*^q)$
for some constant $c_\xi$ and for every $c>0$,
 $|\xi^c|\le c_\xi X_*^q$.
Since $X_* \in \cL^q(\Om,Q)$,
dominated convergence yields that 
$\E_Q[\xi] = \lim_{c \to \infty} \E_Q[\xi^c]$.
Also, by Corollary \ref{c.approximate},
$\lim_{c \to \infty} \Phi(\xi^c)= \Phi(\xi)$
and the sub-differential inequality at $\xi^c \in \cC_b(\Om)$
imply that $\E_Q[\xi^c] \le \Phi(\xi^c)$.
Hence,
$\E_Q[\xi] = \lim_{c \to \infty} \E_Q[\xi^c]
\le \lim_{c \to \infty} \Phi(\xi^c)= \Phi(\xi)$
for every $\xi \in \cC_p(\Om)$.

{\em{Step 6.}} Fix $ \gamma \in \cG$.
Then, by Assumption \ref{asm.1},
$\gamma \in \cC_{q,p}(\Om)$ and hence,
$\gamma^-\in \cC_p(\Om)$.
For $a>0$, set $\gamma_a:= 
\gamma \wedge a$.
Since $\gamma_a \le \gamma$, 
$\Phi(\gamma_a) \le \Phi(\gamma) \le 0$.
Also, $\gamma_a \in \cC_p(\Om)$
and by the previous step,
$\E_Q[\gamma_a] \le \Phi(\gamma_a) \le 0$.
Moreover, $\left|\gamma_a\right| \le \left|\gamma\right|
\le c_\gamma (1+X_*^q)$ for some $c_\gamma>0$.
Since  $X_* \in \cL^q(\Om,Q)$,
dominated convergence yields
$\E_Q[\gamma]=\lim_{a \to \infty} \E_Q[\gamma_a] \le 0$.
Hence, $\E_Q[\gamma] \le 0$ for every $\gamma \in \cG$.  This 
and Step 4 imply that
$\partial \Phi \subset \cQ(\cG)$.

\end{proof}

The above results also prove
the compactness of the set
$\cQ(\cG)$.

\begin{cor}
\label{c.compact}
$\cQ(\cG)$ is convex and both compact as well as sequentially compact with respect to the topology induced by the pairing 
$\langle\cC_b(\Om),\cM(\Om)\rangle$.
\end{cor}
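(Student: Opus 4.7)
The plan is to combine Proposition~\ref{p.subdifferential}, which identifies $\cQ(\cG)$ with the subdifferential $\partial\Phi$, with the $\beta_0$-continuity of $\Phi$ established in the previous proposition and the tight localization furnished by Theorem~\ref{t.local}. Convexity of $\cQ(\cG)$ is immediate: both the martingale property and the inequalities $\E_Q[\gamma]\le 0$ for $\gamma\in\cG$ are linear in $Q$. If $\Phi(0)<0$, then $\cQ(\cG)=\emptyset$ by Corollary~\ref{c.alt} and the statement is vacuous, so I shall assume $\Phi(0)=0$, in which case the preceding proposition ensures that $\Phi$ is $\beta_0$-continuous and real-valued on $\cC_b(\Om)$.

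For compactness in $\sigma(\cM(\Om),\cC_b(\Om))$, I would observe that
\[
\partial\Phi \;=\; \bigcap_{\xi\in\cC_b(\Om)} \{Q\in\cM(\Om):Q(\xi)\le\Phi(\xi)\}
\]
is an intersection of $\sigma(\cM(\Om),\cC_b(\Om))$-closed half-spaces and is therefore itself closed. The $\beta_0$-continuity of $\Phi$ at the origin furnishes a balanced $\beta_0$-neighborhood $V$ of $0$ with $\Phi\le 1$ on $V$; then for $Q\in\partial\Phi$ and $\xi\in V$, both $Q(\xi)\le\Phi(\xi)\le 1$ and $-Q(\xi)=Q(-\xi)\le 1$, so $\partial\Phi\subset V^\circ$. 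Since $\cM(\Om)$ is the topological dual of $(\cC_b(\Om),\beta_0)$, the Banach--Alaoglu--Bourbaki theorem for locally convex spaces yields $\sigma(\cM(\Om),\cC_b(\Om))$-compactness of $V^\circ$, and combined with the closedness this gives compactness of $\cQ(\cG)$.

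For sequential compactness I would first extract uniform tightness of $\cQ(\cG)$ from Theorem~\ref{t.local}. Since $(\Om,S^*)$ is perfectly normal, each open set $\Om\setminus K_n$ is $F_\sigma$, and Urysohn's lemma produces $\xi_{n,m}\in\cC_b(\Om)$ with $0\le\xi_{n,m}\uparrow\one_{\Om\setminus K_n}$ pointwise. The subgradient inequality and monotone convergence then yield $Q(\Om\setminus K_n)=\lim_m Q(\xi_{n,m})\le\Phi(\one_{\Om\setminus K_n})\to 0$ uniformly in $Q\in\cQ(\cG)$. The sets $K_n$ being $S^*$-compact, hence metrizable by the structural results of Appendix~A (cf.~\cite{jakubowski,stopo}), any sequence $\{Q_k\}\subset\cQ(\cG)$ admits, by weak compactness of finite Radon measures on each compact metric $K_n$ together with a diagonal argument, a subsequence $\{Q_{k_j}\}$ whose restrictions to every $K_n$ converge weakly. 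An $\varepsilon$-tightness decomposition of each test function $f\in\cC_b(\Om)$ into its values on $K_N$ and on $\Om\setminus K_N$ then shows that $Q_{k_j}(f)$ is Cauchy, and its limit defines a Radon probability $Q^*\in\cM(\Om)$ lying in $\cQ(\cG)$ by the weak-$*$ closedness already proven.

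The main technical hurdle I anticipate is precisely this last decomposition: bridging weak convergence on the metrizable compacts $K_n$ to $\sigma(\cM(\Om),\cC_b(\Om))$-convergence against all of $\cC_b(\Om)$ on $\Om$. This step relies essentially on the perfect normality of $(\Om,S^*)$ and on the metrizability of its compact subsets, both inherited from the $S$-topology structure of the Skorokhod space and verified in the Appendix.
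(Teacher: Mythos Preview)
Your argument is correct but takes a different route from the paper in two places. For compactness, you invoke Banach--Alaoglu--Bourbaki via the $\beta_0$-continuity of $\Phi$, which is a clean functional-analytic shortcut; the paper instead first establishes uniform tightness of $\cQ(\cG)$ and then applies Prokhorov's theorem for completely regular Hausdorff spaces \cite[Theorem~8.6.7]{bogachev}. For tightness itself, your Urysohn-plus-monotone-convergence detour is unnecessary: the paper simply invokes \eqref{e.lower}, which already holds on all of $\cB(\Om)$, so $Q(\Om\setminus K_n)\le\sigma_{\cQ(\cG)}(\one_{\Om\setminus K_n})\le\Phi(\one_{\Om\setminus K_n})$ is immediate. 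For sequential compactness, you essentially reprove the sequential half of Prokhorov's theorem by hand (diagonalizing over the metrizable compacts $K_n$ and patching via tightness); the paper just cites the second assertion of \cite[Theorem~8.6.7]{bogachev}, which yields sequential weak$^*$ compactness directly once one knows the $K_n$ are metrizable \cite[Proposition~5.7]{stopo}. Your flagged ``technical hurdle'' (that the pointwise limit of the $Q_{k_j}$ on $\cC_b(\Om)$ actually lies in $\cM(\Om)$) is genuine but resolvable: either use the tightness characterization of $\beta_0$-continuous functionals, or, more cheaply, combine your Cauchy argument with the compactness you have already established, since a sequence that is Cauchy against every test function in a weak$^*$ compact set must converge to its unique cluster point.
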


\begin{proof}
It is clear that $\cQ(\cG)$ 
is convex.
Let $K_n$ be 
as in the 
proof of Theorem~\ref{t.local}.
Then, by \eqref{e.lower},
$\sigma_{\cQ(\cG)}(\one_{\Om \setminus K_n}) 
\le \Phi(\one_{\Om \setminus K_n})=: \alpha_n$.
By Theorem \ref{t.local}, $\alpha_n$ tends to zero. Hence, 
$Q(K_n)\ge 1-\alpha_n$ uniformly over $Q \in \cQ(\cG)$.
Since $\alpha_n$ converges to zero, 
$\cQ(\cG)$ is uniformly tight.

By Proposition \ref{p.subdifferential},
$\cQ(\cG)=\bigcap_{\xi \in \cC_b(\Om)} \left\{Q \in \cP(\Om) \ : \ \E_Q[\xi] \le \Phi(\xi)\right\}$.
Hence, $\cQ(\cG)$  is weak${}^*$ closed.
Then, by Prokhorov's theorem for completely regular Hausdorff spaces 
\cite[Theorem~8.6.7]{bogachev}, $\cQ(\cG)$ is weak${}^*$ compact. By the second assertion 
of \cite[Theorem~8.6.7]{bogachev}, since the compact sets $K_n$ above are 
metrizable \cite[Proposition~5.7]{stopo}, $\cQ(\cG)$ is also sequentially weak${}^*$ compact.
\end{proof}

\section{Proof of Theorem \ref{t.main}}
\label{sec:proofs}

\subsection{Duality on $\cC_b(\Om)$}%\label{sec:d}

\begin{prop}%\label{p.cdual}
$\Phi(\xi;\cI(\cG))=\sigma_{\cQ(\cG)}(\xi)$ for all $\xi \in \cC_b(\Om)$.
\end{prop}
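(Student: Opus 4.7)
The inequality $\sigma_{\cQ(\cG)}(\xi) \le \Phi(\xi;\cI(\cG))$ is precisely the lower bound \reff{e.lower}, so only the reverse inequality requires work. The plan is to split on the dichotomy of Corollary~\ref{c.alt}. If $\Phi(0;\cI(\cG))<0$, then $\cQ(\cG)$ is empty and $\Phi(\cdot;\cI(\cG))\equiv -\infty$ on $\cB_p(\Om)\supset\cC_b(\Om)$, so the identity holds by convention. We may therefore assume $\Phi(0;\cI(\cG))=0$, in which case the preceding proposition gives that $\Phi(\cdot;\cI(\cG))$ is real-valued and $\beta_0$-continuous on $\cC_b(\Om)$.

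Once continuity is secured, the conclusion follows from a standard Fenchel--Moreau argument applied to a sublinear functional on a locally convex space. The functional $\Phi(\cdot;\cI(\cG))$ is convex and monotone, positively homogeneous (since $\cI(\cG)$ is a cone, so that $\lambda \ell \in \cI(\cG)$ whenever $\ell \in \cI(\cG)$ and $\lambda \ge 0$), sub-additive, and $\beta_0$-lower semicontinuous; together with $\Phi(0;\cI(\cG))=0$ this makes it a proper, lower semicontinuous sublinear functional on the locally convex space $(\cC_b(\Om),\beta_0)$. The Fenchel--Moreau theorem in the form cited as \cite[Theorem~2.4.14]{zalinescu} in the introduction then yields the representation
\[
\Phi(\xi;\cI(\cG)) \;=\; \sup_{\varphi \in \partial\Phi}\ \varphi(\xi), \qquad \xi \in \cC_b(\Om),
\]
where $\partial\Phi$ is the sub-differential at $0$ computed in the $\beta_0$-continuous dual of $\cC_b(\Om)$.

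Since $(\Om,S^*)$ is completely regular Hausdorff, the $\beta_0$-dual of $\cC_b(\Om)$ equals $\cM(\Om)$, as recalled in the set-up. Consequently, the sub-differential $\partial\Phi$ coincides with the set of measures displayed in Proposition~\ref{p.subdifferential}, which in turn identifies it with $\cQ(\cG)$. Substituting gives $\Phi(\xi;\cI(\cG)) = \sup_{Q \in \cQ(\cG)}\E_Q[\xi] = \sigma_{\cQ(\cG)}(\xi)$, as required.

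The proof is essentially an assembly of earlier results, and the main obstacles have already been overcome upstream: the $\beta_0$-continuity of $\Phi$ ultimately rests on the $S$-localization Theorem~\ref{t.local}, while the identification $\partial\Phi=\cQ(\cG)$ required verifying in Proposition~\ref{p.subdifferential} that any dual element is a probability measure under which $X$ is a martingale and all elements of $\cG$ have non-positive expectation. The only delicate point remaining at the present stage is the verification that $\Phi$ is sublinear and $\beta_0$-lsc as a functional of $\xi\in\cC_b(\Om)$, which is routine from the definition of $\Phi$ and the preceding proposition.
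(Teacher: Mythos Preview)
Your proof is correct and follows essentially the same approach as the paper: reduce to the case $\Phi(0;\cI(\cG))=0$ via Corollary~\ref{c.alt}, invoke the $\beta_0$-continuity established just before, apply Fenchel--Moreau on $(\cC_b(\Om),\beta_0)$ to get $\Phi=\sigma_{\partial\Phi}$, and finish with Proposition~\ref{p.subdifferential}. The only cosmetic difference is that the paper cites \cite[Theorem~2.3.3]{zalinescu} rather than Theorem~2.4.14, and does not separately mention the lower bound \reff{e.lower} since it is subsumed in the Fenchel--Moreau identity.
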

\begin{proof}
In view of Corollary \ref{c.alt},
we may assume that $\Phi(0;\cI(\cG))=0$.
Then, by the results of
Section \ref{sec:Cb}, $\Phi$ is convex, finite-valued and $\beta_0$-continuous.
Hence, the hypotheses of the Fenchel-Moreau
theorem on the topological space $\cC_b(\Om)$ with 
the locally convex $\beta_0$-topology are
satisfied
\cite[Theorem~2.3.3]{zalinescu}.  Since $\Phi$
is positively homogenous, 
$\Phi(\xi;\cI(\cG))= \sigma_{\partial \Phi}(\xi)$
for every $\xi \in \cC_b(\Om)$. 
We then
complete the proof of
duality on $\cC_b(\Om)$
by Proposition \ref{p.subdifferential}.
\end{proof}

\subsection{Duality on $\cU_p(\Om)$}%\label{sec:d1}

We first extend the duality from $\cC_b(\Om)$ to $\cU_{b}(\Om)$ 
by a minimax argument.  

\begin{lem}
\label{l.extension1}
The duality $\Phi(\xi;\cI(\cG))=\sigma_{\cQ(\cG)}(\xi)$
holds for all  $\xi \in \cC_b(\Om)$ if and only if it holds  for all  $\xi \in\cU_{b}(\Om)$.
\end{lem}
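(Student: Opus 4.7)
The ``only if'' direction is immediate since $\cC_b(\Om) \subset \cU_b(\Om)$. For the nontrivial direction, I would assume the duality on $\cC_b(\Om)$ and extend it to $\xi \in \cU_b(\Om)$. Thanks to \reff{e.lower}, only the inequality $\Phi(\xi;\cI(\cG)) \le \sigma_{\cQ(\cG)}(\xi)$ needs to be shown; and if $\cQ(\cG) = \emptyset$, Corollary~\ref{c.alt} already collapses both sides to $-\infty$, so I may assume $\cQ(\cG) \ne \emptyset$.

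The first step is approximation from above. Since $(\Om, S^*)$ is perfectly normal Hausdorff (Appendix~A), every bounded upper semicontinuous function is the pointwise limit of a decreasing sequence of continuous bounded functions. So I pick $\xi_n \in \cC_b(\Om)$ with $\xi_n \downarrow \xi$ and $\|\xi_n\|_\infty \le \|\xi\|_\infty + 1$, uniformly. Monotonicity of $\Phi$ combined with the duality assumed on $\cC_b(\Om)$ gives
\[
\Phi(\xi;\cI(\cG)) \;\le\; \Phi(\xi_n;\cI(\cG)) \;=\; \sigma_{\cQ(\cG)}(\xi_n)
\;=\; \sup_{Q \in \cQ(\cG)} \E_Q[\xi_n].
\]

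The second step is the minimax/compactness interchange $\inf_n \sup_Q \E_Q[\xi_n] \le \sup_Q \E_Q[\xi]$. Because $\cQ(\cG)$ is weak${}^*$-compact and sequentially compact (Corollary~\ref{c.compact}) and $Q \mapsto \E_Q[\xi_n]$ is continuous in the pairing with $\cC_b(\Om)$, the supremum is attained at some $Q_n \in \cQ(\cG)$. Extracting a weak${}^*$-convergent subsequence $Q_{n_k} \to Q^* \in \cQ(\cG)$, I exploit the monotonicity of $(\xi_n)$: for every fixed $m$ and all $n_k \ge m$, $\xi_{n_k} \le \xi_m$, hence
\[
\sigma_{\cQ(\cG)}(\xi_{n_k}) \;=\; \E_{Q_{n_k}}[\xi_{n_k}] \;\le\; \E_{Q_{n_k}}[\xi_m] \;\longrightarrow\; \E_{Q^*}[\xi_m] \quad (k \to \infty),
\]
because $\xi_m \in \cC_b(\Om)$. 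Since $(\sigma_{\cQ(\cG)}(\xi_n))_n$ is decreasing in $n$, its limit equals $\inf_n \sigma_{\cQ(\cG)}(\xi_n)$, so $\inf_n \sigma_{\cQ(\cG)}(\xi_n) \le \E_{Q^*}[\xi_m]$. Letting $m \to \infty$ and invoking bounded convergence under $Q^*$ gives $\E_{Q^*}[\xi_m] \downarrow \E_{Q^*}[\xi] \le \sigma_{\cQ(\cG)}(\xi)$. Combining with the bound from Step~1 yields $\Phi(\xi;\cI(\cG)) \le \sigma_{\cQ(\cG)}(\xi)$, as required.

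I expect the decisive point to be the interchange of $\inf_n$ and $\sup_Q$ in Step~2. The availability of the decreasing approximation $\xi_n \downarrow \xi$ (a consequence of perfect normality) together with the sequential weak${}^*$-compactness of $\cQ(\cG)$ from Corollary~\ref{c.compact} are precisely the two ingredients that make this minimax argument work; without either, the inequality would collapse. The approximation step itself, although delicate, is purely topological and was already furnished by the properties of $(\Om, S^*)$ established in Appendix~A.
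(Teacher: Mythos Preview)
Your proof is correct and follows essentially the same approach as the paper: both use perfect normality of $(\Om,S^*)$ to approximate $\xi\in\cU_b(\Om)$ from above by continuous bounded functions, and then use the weak$^*$ compactness of $\cQ(\cG)$ (Corollary~\ref{c.compact}) to interchange the infimum over approximants with the supremum over measures. The only difference is cosmetic: the paper invokes a general minimax theorem \cite[Theorem~2.10.2]{zalinescu} on the convex set $\{\zeta\in\cC_b(\Om):\zeta\ge\xi\}$, whereas you carry out the interchange by hand via a subsequence argument along a fixed decreasing sequence $\xi_n\downarrow\xi$---in fact, your hands-on argument is exactly the one the paper uses later in Step~3 of the proof of duality on $\cB_p(\Om)$.
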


\begin{proof}
Assume that the duality holds on $\cC_b(\Om)$ and 
let $\eta\in\cU_{b}(\Om)$.  
In view of  \reff{e.lower}, we need to show that
$\sigma_{\cQ(\cG)}(\eta) \geq \Phi(\eta;\cI(\cG))$.
Since $S^*$ is perfectly normal by Lemma \ref{l.matti} below,
for every   $Q \in \cP(\Om)$,
$\E_Q[\eta]=\inf_{\eta \leq \xi \in \cC_b(\Om)}\ \E_Q[\xi]$.
Clearly, $\{\xi \in \cC_b(\Om) : \eta \leq \xi\}$ is a convex subset 
of $\cC_b(\Om)$ and the mapping 
that takes $(\xi,Q)$
to $\E_Q[\xi]$ is continuous and bilinear on 
$\cC_b(\Om) \times \cQ(\cG)$. 
Moreover, by Corollary~\ref{c.compact}, $\cQ(\cG)$ is a 
convex, weak$^*$ compact subset of $\cP(\Om)$. 
Hence, the assumptions of a standard minimax argument are satisfied,
see e.g. \cite[Theorem~2.10.2]{zalinescu}. 
Since $\Phi$ is monotone,
\begin{align*}
\sigma_{\cQ(\cG)}(\eta) 
& = \sup_{Q\in\cQ(\cG)}\ 
\inf_{\eta \leq \xi \in \cC_b(\Om)}\ \E_Q[\xi]
= \inf_{\eta \leq \xi \in \cC_b(\Om)}\ 
\sup_{Q\in\cQ(\cG)}\ \E_Q[\xi]\\
&=  \inf_{\eta \leq \xi \in \cC_b(\Om)}\ 
\sigma_{\partial \Phi}(\xi)
=  \inf_{\eta \leq \xi \in \cC_b(\Om)}\ 
\Phi(\xi;\cI(\cG))\geq\Phi(\eta;\cI(\cG)).
\end{align*}
Therefore, the duality holds on $\cU_{b}(\Om)$.
\end{proof}

\begin{prop}
\label{p.udual}
$\Phi(\xi;\cI(\cG))=\sigma_{\cQ(\cG)}(\xi)$ for all $\xi \in \cU_p(\Om)$.
\end{prop}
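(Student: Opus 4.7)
The plan is to reduce the proposition to the duality already established on $\cU_b(\Om)$ (via Lemma~\ref{l.extension1} applied to the preceding proposition on $\cC_b(\Om)$) by a truncation argument, then pass to the limit using the two approximation lemmas of Section~\ref{sec:approximation}. Fix $\xi \in \cU_p(\Om)$ and consider, for $c \ge 0$, the truncation $\xi^c = (c \wedge \xi) \vee (-c)$ defined in \reff{e.xic}.

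First I would verify that $\xi^c \in \cU_b(\Om)$. It is clearly bounded, with $|\xi^c| \le c$. Upper semicontinuity is preserved under these operations: $\xi \wedge c$ is the infimum of two upper semicontinuous functions (the sublevel sets are finite intersections of closed sets), hence upper semicontinuous, and $(\xi \wedge c) \vee (-c)$ is then the pointwise maximum of an upper semicontinuous function and a constant, which is again upper semicontinuous. Therefore $\xi^c \in \cU_b(\Om)$, and by the duality on $\cU_b(\Om)$ furnished by Lemma~\ref{l.extension1} together with the preceding proposition,
\begin{equation*}
\Phi(\xi^c; \cI(\cG)) \;=\; \sigma_{\cQ(\cG)}(\xi^c) \quad \text{for every } c \ge 0.
\end{equation*}

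Next, I would let $c \to \infty$. Since $\xi \in \cU_p(\Om) \subset \cB_p(\Om)$, Corollary~\ref{c.approximate}, applied with the admissible choice $\cI = \cI(\cG)$, gives $\Phi(\xi^c; \cI(\cG)) \to \Phi(\xi; \cI(\cG))$, while Lemma~\ref{l.est} gives $\sigma_{\cQ(\cG)}(\xi^c) \to \sigma_{\cQ(\cG)}(\xi)$. Equating these limits yields the desired identity on $\cU_p(\Om)$. There is essentially no obstacle here beyond the semicontinuity check in the first step: the argument is a routine truncation reduction, and the heavy lifting (localization, $\beta_0$-continuity, the minimax extension to $\cU_b(\Om)$, and the uniform tail bound for $X_*$) has already been carried out in the previous sections. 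Note also that the degenerate case $\Phi(0;\cI(\cG)) < 0$ needs no separate treatment since, by Corollary~\ref{c.alt}, both sides are identically $-\infty$ on $\cB_p(\Om) \supset \cU_p(\Om)$.
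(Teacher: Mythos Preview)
Your proposal is correct and follows essentially the same approach as the paper: truncate $\xi$ to $\xi^c \in \cU_b(\Om)$, invoke the duality there, and pass to the limit via Lemma~\ref{l.est} and Corollary~\ref{c.approximate}. You simply add explicit detail on the upper semicontinuity of $\xi^c$ and a remark on the degenerate case, both of which the paper leaves implicit.
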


\begin{proof}
Fix $\xi \in \cU_p(\Om)$ and $\xi^c$ be as in \reff{e.xic}.  Then, 
$\xi^c \in \cU_b(\Om)$ and duality holds at $\xi^c$.  We now combine this
with Lemma~\ref{l.est} and Corollary \ref{c.approximate} to arrive at
$$
\sigma_{\cQ(\cG)}(\xi)=
\lim_{c \to \infty} \sigma_{\cQ(\cG)}(\xi^c)
=\lim_{c \to \infty} \Phi(\xi^c;\cI(\cG))
=\Phi(\xi;\cI(\cG)).
$$
\end{proof}

\subsection{Duality on $\cB_p(\Om)$}%\label{sec:d2}

In this section, we follow the approach of \cite{kellerer}
and extend the duality
to measurable functions
by the Choquet capacitability theorem
\cite{choquet}.

\begin{prop}
$\Phi(\xi;\hcI(\cG))= \sigma_{\cQ(\cG)}(\xi)$ for all $\xi \in \cB_p(\Om)$.
\end{prop}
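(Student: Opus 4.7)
By \reff{e.lower}, only the inequality $\Phi(\xi;\hcI(\cG)) \le \sigma_{\cQ(\cG)}(\xi)$ requires proof. Applying Lemma~\ref{l.est} and Corollary~\ref{c.approximate} (with $\cI = \hcI(\cG) \supset \cI(\cG)$), both sides are continuous under the truncations $\xi \mapsto \xi^c$ of \reff{e.xic}, so it suffices to prove the reverse inequality for $\xi \in \cB_b(\Om)$. Corollary~\ref{c.alt} further lets me assume $\cQ(\cG)$ is non-empty, so that $\Phi(0;\cI(\cG))=0$.

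The plan is to follow Kellerer and introduce the set function $C\colon 2^\Om \to [0,\infty)$, $C(A):=\Phi(\one_A;\hcI(\cG))$, verifying that $C$ is a Choquet $S^*$-capacity. Monotonicity is immediate. For continuity from below along $A_n\uparrow A$, fix $\epsilon>0$ and pick $\ell_n\in\hcI(\cG)$ with $\one_{A_n}\le C(A_n)+\epsilon+\ell_n$; since $\ell_n\ge -1-\sup_n C(A_n)-\epsilon$ is uniformly bounded below by a constant in $\cB_q^+(\Om)$, the Fatou-closure of $\hcI(\cG)$ yields $\liminf_n\ell_n\in\hcI(\cG)$, whence $\one_A\le\sup_n C(A_n)+\epsilon+\liminf_n\ell_n$ and $C(A)\le\sup_n C(A_n)$. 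For continuity from above along compact $K_n\downarrow K$, the $S^*$-closedness of each $K_n$ puts $\one_{K_n}\in\cU_b(\Om)$, and the chain $\sigma_{\cQ(\cG)} \le \Phi(\cdot;\hcI(\cG)) \le \Phi(\cdot;\cI(\cG)) = \sigma_{\cQ(\cG)}$ on $\cU_p(\Om)$ (the last equality by Proposition~\ref{p.udual}) gives $C(K_n)=\sup_{Q\in\cQ(\cG)}Q(K_n)$; the convergence $C(K_n)\downarrow C(K)$ then follows by extracting a weak$^*$ limit $Q^*\in\cQ(\cG)$ of near-maximizers using the weak$^*$ compactness of $\cQ(\cG)$ (Corollary~\ref{c.compact}) and the portmanteau inequality $\limsup_n Q_n(F)\le Q^*(F)$ for $S^*$-closed $F$, available in the perfectly normal Hausdorff space $(\Om,S^*)$. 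Choquet's capacitability theorem then produces $C(A)=\sup\{C(K):K\subset A,\ K\ S^*\text{-compact}\}$ for every Borel $A$, and since such $K$ satisfies $C(K)=\sup_Q Q(K)\le\sup_Q Q(A)=\sigma_{\cQ(\cG)}(\one_A)$, one obtains $C(A)=\sigma_{\cQ(\cG)}(\one_A)$ for every Borel $A$.

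It remains to lift duality from indicators to $\xi\in\cB_b(\Om)$. Both functionals $\Phi(\cdot;\hcI(\cG))$ and $\sigma_{\cQ(\cG)}$ are monotone, sublinear and continuous from below along bounded increasing sequences---the former by the Fatou-closure argument already used, the latter by monotone convergence applied pointwise in $Q$ together with a near-optimizer extraction. After reducing to the case $0\le\xi\le 1$, the dyadic approximation $\xi_n:=2^{-n}\lfloor 2^n\xi\rfloor\uparrow\xi$ combined with the Fatou property of both functionals reduces the problem to proving duality for the simple Borel functions $\xi_n=\sum_k c_k\one_{A_{n,k}}$. I expect this to be the main obstacle: sublinearity alone yields only the Choquet-integral-type bound $\Phi(\xi_n;\hcI(\cG))\le\sum_k c_k\,\sigma_{\cQ(\cG)}(\one_{A_{n,k}})$, which in general strictly exceeds $\sigma_{\cQ(\cG)}(\xi_n)$ because the supremum over $Q$ does not commute with the finite sum, so set-level capacitability does not automatically upgrade to the functional level. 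To overcome this, I would exploit the uniform tightness of $\cQ(\cG)$ furnished by Theorem~\ref{t.local} together with the metrizability of the localising compacts noted in the proof of Corollary~\ref{c.compact}, reducing the extension to a classical Kellerer-type argument on a metrisable compact subspace where a uniform-in-$Q$ inner-regular replacement of $\xi_n$ by $\cU_b$ functions is available, and then propagating the equality $\Phi(\eta;\hcI(\cG))=\sigma_{\cQ(\cG)}(\eta)$ of Proposition~\ref{p.udual} to the limit.
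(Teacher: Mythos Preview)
Your reductions to $\cB_b(\Om)$ and your verification of continuity from below (via the Fatou-closure) and from above along decreasing compacts (via Proposition~\ref{p.udual} and weak$^*$ compactness) are essentially correct and mirror the paper. The genuine gap is exactly the one you flag: you apply Choquet's theorem at the \emph{set} level, obtaining $\Phi(\one_A;\hcI(\cG))=\sigma_{\cQ(\cG)}(\one_A)$ for Borel $A$, and then try to lift this to general $\xi\in\cB_b(\Om)$ via dyadic approximation and sublinearity. As you observe, sublinearity only yields $\widehat\Phi(\xi_n)\le\sum_k c_k\,\sigma_{\cQ(\cG)}(\one_{A_{n,k}})$, which overshoots $\sigma_{\cQ(\cG)}(\xi_n)$ because the sup over $Q$ does not commute with the finite sum. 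Your proposed fix---localising to the metrisable compacts of Theorem~\ref{t.local} and invoking an unspecified ``Kellerer-type argument'' there---is not a proof; inner-regular approximation of each level set separately, even uniformly in $Q$, still does not produce a single $Q$-independent $\cU_b$ minorant of $\xi_n$, which is what you would need.

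The paper sidesteps this obstacle entirely by applying the \emph{functional} version of the capacitability theorem (Kellerer \cite[Proposition~2.11]{kellerer} or \cite[Proposition~2.1]{BCK}) directly to $\widehat\Phi$ on functions, not to the set function $C$. The required hypotheses are precisely the two continuity properties you already essentially proved, but stated for functions: $\widehat\Phi(\eta_n)\downarrow\widehat\Phi(\eta^*)$ whenever $\cU_b(\Om)\ni\eta_n\downarrow\eta^*\in\cU_b(\Om)$, and $\widehat\Phi(\zeta_n)\uparrow\widehat\Phi(\zeta^*)$ whenever $\cB_b(\Om)\ni\zeta_n\uparrow\zeta^*\in\cB_b(\Om)$. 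The conclusion is then the functional inner regularity
\[
\widehat\Phi(\zeta)=\sup\bigl\{\Phi(\eta;\cI(\cG)):\eta\in\cU_b(\Om),\ \eta\le\zeta\bigr\},\qquad \zeta\in\cB_b(\Om),
\]
using that $\widehat\Phi=\Phi(\cdot;\cI(\cG))$ on $\cU_b(\Om)$ and that the Suslin functions generated by $\cU_b(\Om)$ contain $\cB_b(\Om)$ (Lemma~\ref{l.matti}). From here the duality is immediate: by Proposition~\ref{p.udual},
\[
\widehat\Phi(\zeta)=\sup_{\eta\le\zeta,\,\eta\in\cU_b}\ \sup_{Q\in\cQ(\cG)}\E_Q[\eta]
=\sup_{Q\in\cQ(\cG)}\ \sup_{\eta\le\zeta,\,\eta\in\cU_b}\E_Q[\eta]
=\sup_{Q\in\cQ(\cG)}\E_Q[\zeta],
\]
where the inner supremum on the right equals $\E_Q[\zeta]$ because $(\Om,S^*)$ is perfectly normal. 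No sublinearity issue arises because the two suprema commute freely. In short: work with the functional capacitability theorem from the outset rather than the set version; the obstacle you identified then never appears.
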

\begin{proof}
We write $\widehat{\Phi}(\cdot)$
instead of $\Phi(\cdot;\hcI(\cG))$
and $\Phi(\cdot)$
for $\Phi(\cdot;\cI(\cG))$ as before.

{\em{Step 1.}}
Since $\hcI(\cG) \supset \cI(\cG)$,
$\widehat{\Phi} \le \Phi$.  By Proposition \ref{p.udual}
and  \reff{e.lower},
for every $\eta \in \cU_b(\Om)$,
$\sigma_{\cQ(\cG)}( \eta)
\le \widehat{\Phi}(\eta) \le \Phi(\eta)
=\sigma_{\cQ(\cG)}( \eta)$.  Hence,
$\Phi=\widehat{\Phi}$ on $\cU_b(\Om)$.

{\em{Step 2.}}  
Consider a sequence $\{Q_n\}_{n \in \N}$ in $\cM(\Om)$
converging  to $Q^*$
in the weak$^*$ topology.
Then,
$\E_{Q_n}[\xi]$ converges to $\E_{Q^*}[\xi]$
for every $\xi \in \cC_b(\Om)$.
Fix $\eta \in \cU_b(\Om)$.  
Since $S^*$ is perfectly normal  by Lemma \ref{l.matti} below,
there is a decreasing sequence
$\{\xi_k\}_{k \in \N} \subset \cC_b(\Om)$ converging to $\eta$ and 
$\E_{Q^*}[\xi_k]$ converges  to $\E_{Q^*}[\eta]$.
We use this and the weak$^*$ convergence of 
$Q_n$ to arrive at
$$
\limsup_{n \to \infty} \E_{Q_n}[\eta] \le 
\inf_{k}\ 
\lim_{n \to \infty} \E_{Q_n}[\xi_k] = 
\inf_{k}\ 
\E_{Q^*}[\xi_k] = \E_{Q^*}[\eta].
$$
Since by  Corollary \ref{c.compact},
$\cQ(\cG)$ is weak$^*$ compact,
the above property
implies that for every $\eta \in \cU_b(\Om)$
there is $Q_\eta \in \cQ(\cG)$ satisfying,
$\E_{Q_\eta}[\eta]=\sigma_{\cQ(\cG)}(\eta)$.

{\em{Step 3.}}  
Suppose that a sequence $\{\eta_n\}_{n \in \N} \subset \cU_b(\Om)$
decreases monotonically to a function $\eta^* \in \cU_b(\Om)$. 
Then, $Q_n:= Q_{\eta_n}$ satisfies 
$\E_{Q_n}[\eta_n]=\sigma_{\cQ(\cG)}(\eta_n)$.
Since $\cQ(\cG)$ is sequentially compact with respect to $\sigma(\cM,\cC_b)$,
there is a subsequence
(without loss of generality,
again denoted by $Q_n$)
and $Q^* \in \cQ(\cG)$
such that $Q_{n}$ converges
to $Q^*$ in the weak$^*$ topology.  
Then, by the previous step,
$$
\limsup_{n \to \infty} \E_{Q_n}[\eta_n]  \le
\inf_k\ \limsup_{n \to \infty} \E_{Q_n}[\eta_k]
 \le \inf_k\ \E_{Q^*}[\eta_k]
 = \E_{Q^*}[\eta^*],
$$
where we used monotone convergence in the final equality. 

By the first step, $\Phi=\widehat{\Phi}$ on $\cU_b(\Om)$.
Then, by Proposition \ref{p.udual} and \reff{e.lower},
\begin{align*}
\limsup_{n \to \infty} \widehat{\Phi}(\eta_n)  
& = \limsup_{n \to \infty}\sigma_{\cQ(\cG)}(\eta_n)   
= \limsup_{n \to \infty} \E_{Q_n}[\eta_n]  
\\&
\le  \E_{Q^*}[\eta^*]
\le\sigma_{\cQ(\cG)}(\eta^*) 
\le \widehat{\Phi}(\eta^*).
\end{align*}
Since $\eta_n$'s are decreasing to $\eta^*$,
the opposite inequality is immediate.
Hence,
\be
\label{e.down}
\lim_{n \to \infty} \widehat{\Phi}(\eta_n) = \widehat{\Phi}(\eta^*) 
\quad {\text{whenever}}\quad
 \cU_b(\Om) \ni \eta_n \downarrow \eta^* 
 \in \cU_b(\Om) \quad
{\text{as}}\ n \to \infty.
\ee

{\em{Step 4.}}  
Consider $\{\zeta_n\}_{n \in \N} \subset \cB_b(\Om)$
increasing monotonically to $\zeta^*\in \cB_b(\Om)$.  
Choose $\{\ell_n\}_{n \in \N} \subset \hcI(\cG)$ so that
$\widehat{\Phi}(\zeta_n) + \frac1n+ \ell_n(\om)
\ge \zeta_n(\om)$, 
for every $ \om \in \Om$.
It is clear that $\widehat{\Phi}(\zeta_1) \le \widehat{\Phi}(\zeta_n) \le \widehat{\Phi}(\zeta^*)$.
Since $\zeta_n \ge \zeta_1$,
$\ell_n \ge (\zeta_1 - \widehat{\Phi}(\zeta^*) - 1) \wedge 0  =: -\lambda$.
Then, by the definition of $\hcI(\cG)$,
 $\ell^*:= \liminf_{n} \ell_n \in \hcI(\cG)$.  Therefore,
 $$
\zeta^*(\om) = \lim_{n \to \infty} \zeta_n(\om)
\le  \liminf_{n \to \infty} \left[\widehat{\Phi}(\zeta_n) + \frac1n+ \ell_n(\om)\right]
= \lim_{n \to \infty}\widehat{\Phi}(\zeta_n) +\ell^*(\om),
$$
for every $\om \in \Om$.
Hence, $ \lim_{n \to \infty} \widehat{\Phi}(\zeta_n) \ge \widehat{\Phi}(\zeta^*)$.  Again
the opposite inequality is immediate.  So we have shown that
\be
\label{e.up}
\lim_{n \to \infty} \widehat{\Phi}(\zeta_n) =\widehat{\Phi}(\zeta^*) 
\quad {\text{whenever}}\quad
\cB_b(\Om) \ni \zeta_n  \uparrow \zeta^* \in \cB_b(\Om)\quad
{\text{as}}\ n \to \infty.
\ee

{\em{Step 5.}}  \reff{e.down} and \reff{e.up} imply
that we can apply  the Choquet capacitability theorem 
(see \cite[Proposition~2.11]{kellerer} or \cite[Proposition~2.1]{BCK})
to the functional $\widehat{\Phi}$. Let $\mathcal{S}(\Omega)$ denote the family of all Suslin functions 
generated by $\cU_b(\Omega)$ i.e. functions of the form
$\sup_{\phi\in\mathbb{N}^\mathbb{N}}\inf_{k\geq 1} \xi_{\phi\mid k}$, where 
$\phi| k$ denotes the restriction of $\phi \in \mathbb{N}^\mathbb{N}$ to 
$\{1, \dots, k\}$ and each $\xi_{\phi\mid k}$ is an element of $ \cU_b(\Omega)$; we refer to 
\cite[Section~42]{fremlin} for details. Since the $S^*$-topology on $\Om$ is
perfectly normal, 
by Lemma \ref{l.matti} below,
the family $\mathcal{S}(\Om)$ 
contains $\cB_b(\Om)$.
Moreover, $\widehat{\Phi}=\Phi$ on $\cU_b(\Om)$.
Hence, 
$\widehat{\Phi}(\zeta) = \sup \left\{
\Phi(\eta) : \eta \in \cU_b(\Om),  \eta \le \zeta
\right\}$ for every $ \zeta \in \cB_b(\Om)$.
This approximation
together with the duality proved in Lemma \ref{l.extension1} yield,
\begin{align*}
\widehat{\Phi}(\zeta)&=
\sup_{\{\eta \le \zeta, \
\eta \in \cU_b(\Om)\}} \ \sup_{Q\in \cQ(\cG)} \E_Q[\eta]
=
\sup_{Q\in \cQ(\cG)}\
\sup_{\{\eta \le \zeta, \ 
\eta \in \cU_b(\Om)\}}\  \E_Q[\eta]\\
&= \sup_{Q\in \cQ(\cG)}\ E_Q[\zeta] \quad \mbox{for all } \zeta \in \cB_b(\Om).
\end{align*}
Hence, the duality holds on $\cB_b(\Om)$.

{\em{Step 5.}} We now follow the proof of 
Proposition \ref{p.udual} {\em{mutatis mutandis}}
to extend the result to $\cB_p(\Om)$.
\end{proof}

\section{Counter-examples}
\label{sec:counter}

In  this section, $d=1$, $T=1$, $p=1$ and $q=2$.
For a given   $\mu \in \cP(\R_+)$, set
$$
\cG_\mu:=\left\{ g(X_1(\om)) : g \in \cC_{2,1}(\R_+) \, , \, \mu(g)=0 \right\}.
$$

\begin{exm}%\label{ex.counter2}
{\rm Suppose that $\mu$ is supported in $[1,3]$
and let $\Om =  \cD([0,1];[1,3])$.
Then, there exists a countable set $A \subset \Om$
such that
 $0=\sigma_{\cQ(\cG_\mu)}(\one_A)
= \Phi(\one_A; \cI(\cG_\mu))$
and $\Phi(\one_A; \cI_s(\cG_\mu))=1$. In particular,
$\cI_s(\cG_\mu) \neq \cI(\cG_\mu)$.}
\end{exm}

\begin{proof}
For $\om \in \Om$, let $v_3(\om):=
 \sup_{\pi}\sum_{k=1}^n|\omega(\tau_k)-\omega(\tau_{k-1})|^3$,
where $\pi$ ranges over all finite partitions $0= \tau_0< \tau_1<\dots< \tau_n=1$ of $[0,1]$.
Set $t_0=1$ and for $k\ge 1$,
$t_k:=1/k$, $s_k:=(t_{k+1}+t_k)/2$,
$c_k:= 2f(s_k)/(t_k-t_{k+1})$ with $f(x):=x^{1/3}$ for $x\geq 0$, and
$$  
\omh(t):= 
\sum_{k=1}^\infty\
c_k(t-t_{k+1})\one_{( t_{k+1}, s_k]}(t) +
[f\left(s_k\right)-c_k\left(t-s_k\right)] \one_{(s_k,t_k]}(t),
\quad t \in [0,1].
$$
It is clear that $\omh\in A \subset \Om$
and  $\omh(t_n)=0$, $\omh(s_n)=f(s_n)$.
Set 
$\omhn (t)
:= \omh({t \wedge t_n})$  for $t \in [0,1]$.  
Then, $\omhn(t)=0$ for $t \in [t_n,1]$ and
$$
v_3(\omhn)
\ge \sum_{k=n}^\infty(f(s_k)-f(t_k))^3
=\sum_{k=n}^\infty s_k=\infty.
$$

Let  $\Q$ be the set of 
rational numbers in $[1,2]$.  Set
$A_q:= \left\{ q+ \omhn : n \in \N\right\}$ and
$A:= \cup_{q \in \Q} A_q$.
Then, $A \subset \{\om \in \Om : v_3(\om)=\infty\}$.
Since for any martingale measure  $Q$, $Q(\om \in \Om : v_3(\om)=\infty)=0$,
we conclude that $\sigma_{\cQ(\cG_\mu)}(\one_A)=0$.
Suppose that  for some $c \in \R$,
$\gamma \in \cG$ and $H=(\tau_m,h_m)_{m \in \N} \in \cH_s$,
we have $c+\gamma(\om)+(H \cdot X)_1(\om)\geq \one_A(\om)$
for every $\om \in \Om$. 
Then, $\gamma(\om)= g(X_1(\om))$ with
$\mu(g)=0$ and $\gamma(\om)= g(q)$ for every $\om \in A_q$.  Hence,
$c+g(q)+(H \cdot X)_1(q+\omhn) \geq 1$, 
for every $q \in \Q, n\ge 1$.
By the adaptedness of $H$, 
$(H\cdot X)_1(q+\omhn) = (H\cdot X)_{t_n}(q+\omh)$,
for each $q$.  Therefore,
$$
\lim_{n \to \infty} \ (H\cdot X)_1(q+\omhn) = 
\lim_{n \to \infty} \  (H\cdot X)_{t_n}(q+\omh) =0.
$$
This implies that $c+g(q) \ge 1$.   Moreover, $g$ is continuous
and $\mu(g)=0$. Hence, $c \ge 1$.  
Since  $\Phi(\one_A; \cI_s(\cG_\mu))$ is the smallest
of all such constants, we conclude that $\Phi(\one_A; \cI_s(\cG_\mu)) \ge 1$.
As $\one_A \le 1$, $\Phi(\one_A; \cI_s(\cG_\mu)) = 1$. 

We next proceed
as in Lemma \ref{l.vovk} to show that $\Phi(\one_A; \cI(\cG_\mu)) = 0$.
Indeed, for $k \ge 2$, define $H^k=(\tau^k_m,h^k_m)_{m \in \N} \in \cH_s$
as follows. 
Let $\tau^k_0=0$ and  
for $m \ge 1$, recursively define the stopping times by,
\begin{align*}
\tau^k_{2m-1}(\om)&:= \inf\{ t >\tau^k_{2m-2}(\om) :
\om(t)> \om(0)+ k^{-1/3}/2\}\wedge 1, \\
\tau^k_{2m}(\om)&:= \inf\{ t >\tau^k_{2m-1}(\om) :
\om(t)< \om(0)+ k^{-1/3}/3\}\wedge 1.
\end{align*}
For $m \ge 0$, set $h^k_{2m}= k^{-4/3}$, $h^k_{2m+1}=0$.
Let $U^k_t(\om)$ be the crossings in the time
interval $[0,t]$ between the lower boundary 
$\om(0)+ k^{-1/3}/3$ and the upper boundary $\om(0)+ k^{-1/3}/2$.
Then, as in Lemma~\ref{l.vovk},
$$
(H^k\cdot X)_t(\om) \ge -\frac{\om(0)}{k^{4/3}} + \frac{1}{6 k^{5/3}} U^k_t(\om) \quad
\mbox{for all } t\in [0,1] \mbox{ and } \om \in \Om.
$$
In particular, $H^k \in \cH_s$.  Observe that $U^k_t(q+\omhn) \ge k-n$ for all $n\le k$.
Therefore, for any $q \in [1,2]$,
$$
(H^k\cdot X)_t(q+\omhn) \ge -\frac{2}{k^{4/3}} + \frac{ (k-n)}{6 k^{5/3} } \quad \mbox{for all } 1\le n\le k.
$$
For $\ve>0$, let $H^\ve:=\ve (\widehat{H}^j)_{j \in \N}$, 
where $\widehat{H}^j:= \sum_{k \le j} H^k$.  Then, for each $j\ge 1$,
$$
(H^j\cdot X)_t(\om)=\ve \sum_{1\le k \le j} (H^k\cdot X)_t(\om) \ge 
- \ve \sum_{1 \le k}   \frac{2}{k^{4/3}} =:-\ve C_* \quad \mbox{for all } t\in [0,1].
$$
Hence, $H^\ve$ is admissible.  Also, for  $q \in [1,2]$,
$$
(H^\ve \cdot X)_t(q+\omhn)= \liminf_{j \to \infty} \sum_{1 \le k \le j} \ve (H^k\cdot X)_t(q+\omhn)
 \ge  \sum_{1 \le k } -\frac{2\ve}{k^{4/3}} + \frac{ \ve (k-n)^+}{6 k^{5/3} } =\infty.
$$
Therefore, $\Phi(\one_A;\cI(\cG_\mu)) \le  \ve C_*$ for every $\ve >0$.
\end{proof}

The following example motivates the use of the 
Fatou-closure $\hcI(\cG)$ to establish the duality for measurable functions in 
Theorem~\ref{t.main}. 

\begin{exm}%\label{ex.counter}
{\rm Let $\Om= \cD([0,1];\R_+)$ and consider the
quotient spaces given by $\cG = \{ g(X_1(\om)) : g \in \cC_{2,1}(\R_+),   g(1)=0\}$.
Then, there exists an open set $B \subset \Om$ such that
$0= \sigma_{\cQ(\cG)}(\one_B)= \Phi(\one_B; \hcI(\cG))
<1=\Phi(\one_B; \cI(\cG))$. In particular,
$\cI(\cG) \neq \hcI(\cG)$.}
\end{exm}

\begin{proof}
Consider the  open set
$B:=\{X_T\neq 1\}$, set $\omega^*\equiv 1$
and let $Q^*$ be 
the Dirac measure at $\omega^*$.
Then, $\cQ(\cG)=\{Q^*\}$. Hence, 
$\sigma_{\cQ(\cG)}(\one_B)=
\E_{Q^*}[\one_B] = 0$.

Suppose that $\ell \in \cI(\cG)$ and $c \in \R$ satisfy
$c+\ell \ge \one_B$.  By the definition of $\cI(\cG)$,
there are $H \in \cH$ and $g(X_1(\cdot)) \in \cG$ such that
$\ell(\om)= (H\cdot X)_1(\om)+ g(X_1(\om))$.
Consider a constant path $\om \equiv x $.  Then, 
for this path $(H\cdot X)_T(\omega) =0$ and therefore,
$\one_B(\om)=1 \le c+g(x)$ for every $x \neq 1$.  Since $g(1)=0$
and $g$ is continuous, we conclude that $c \ge 1$.  Hence,
$\Phi(\one_B;\cI(\cG))=1$.
\end{proof}

\section{Financial applications}
\label{sec:fin}

In this section we assume that $X$ models the discounted prices of $d$ assets.
Alternatively, one could also model undiscounted prices and introduce an additional 
process representing a savings account. But this does not change the 
essential mathematical structure; see \cite{CKT}. For related examples and 
discussions of the role of $\Omega$ as a prediction set, we refer to \cite{BKN,david2,obloj}.

The set $\cG$ represents the set of net outcomes of investments in liquid derivative instruments.  
Their initial prices are normalized to zero. Since we do not assume any probabilistic structure, 
this set plays an essential role in determining the pricing functionals.
We give different examples of the set $\cG$. They show that 
finite discrete-time models can be included in our framework by appropriately
choosing the closed set $\Om$.

\begin{exm}[Final Marginal]
\label{ex:1}
{\rm{In this example $\Om=\cD([0,T];\R_+^d)$.
We fix
a probability measure $\mu$ on $\R_+^d$ with finite $q$-th moments and  set
$$
\cG_\mu
:= \left\{ \gamma(\om)= g(\om(T)) -\mu(g) :
g \in \cC_{q,p}(\R_+^d)\right\},\quad
{\text{where}}\quad 
\mu(g)= \int_{\R_+^d}\ g\, d\mu.
$$
Then,
$\cQ(\cG_\mu)$ consists of
all martingale measures $Q$ whose final marginal
is $\mu$, i.e.,
$$
\E_Q[ h(X_T)] = \mu(h) \quad \mbox{for all } h \in \cB_q(\R_+^d).
$$
}}
\end{exm}

\begin{rem}
{\rm The duality in the setting of Example~\ref{ex:2} with one fixed marginal does not immediately extend to the case of two marginals assuming that $\Omega =\cD([0,T];\mathbb{R}_+^d)$. The difficulty arises from the fact that the coordinate mapping~$X_0$ is not continuous. This issue can be removed by introducing a fictitious element $X_{0-}$ on the Skorokhod space $\cD([0,T];\mathbb{R}_+^d)$, i.e. one considers $\Omega_{x_0-}:=\mathbb{R}_+^d\times\cD([0,T];\mathbb{R}_+^d)$.}
\end{rem}

\begin{exm}[Initial Value and Final Marginal]
\label{ex:2}
{\rm{In addition to a final
marginal, in this example we wish to fix the initial
asset values $x_0 \in \R^d_+$.  However,  
the canonical map,
$X_t : \om \in  \cD([0,T]; \R_+^d) \mapsto \om(t) \in \R_+^d$,
is continuous only for $t=T$ and discontinuous at all other points.
Therefore, 
$\Om_{x_0}:= \left\{\om \in \Om\ :
\ \om(0)=x_0\right\}$
is not an $S$-closed subset of $\cD([0,T]; \R_+^d)$.
To overcome this difficulty, we fix a small time increment $h>0$ and define
$$
\Om_{h,x_0}:= \left\{\om \in \Om :
\om(t)=x_0 \mbox{ for all }  t \in [0,h)\right\}.
$$
One may directly verify that $\Om_{h,x_0}$ is $S$-closed.  We keep $\cG_\mu$ as in the previous example.
Then, the elements
of $\cQ(\cG_\mu)$ restricted to $\Om_{h,x_0}$
are martingale measures
with the final marginal $\mu$ and 
satisfy 
$$
Q(X_t=x_0 \mbox{ for all } t \in [0,h))=1, \quad Q \in \cQ(\cG_\mu).
$$
The set $\cQ(\cG_\mu)$ is non-empty provided that 
$\int x\, \mu(dx)= x_0$. }}
\end{exm}

\begin{exm}[Multiple Marginals]%\label{ex:3}
{\rm{
In Example~\ref{ex:1} we fixed the marginal of the dual measures
at the final time.  In a given application, marginals at other time points 
$\cT=\{t_1, \ldots, t_{N}\}$ might be approximately known. So one may want to fix 
these marginals as well. Since $X_{t_i}$ are  all discontinuous, functions of the form $g(X_{t_i})$ 
are not necessarily $S^*$-continuous  on $\cD([0,T]; \R_+^d)$.
So, as in the previous example, we fix  a small $h>0$
and consider the set given by
$$
\Om_{\cT}:= \bigcap_{i=1}^{N}\ \left\{\om \in \Om\ :
\ X_t(\om)=X_{t_i}(\om) \mbox{ for all }  t \in [t_i,t_i+h)\right\}.
$$
Then, $\Om_{\cT}$ is an $S$-closed subset of 
$\cD([0,T]; \R_+^d)$.
Moreover, for each $i$, $X_{t_i}$ restricted to $\Om_\cT$ 
is $S^*$-continuous.  Given probability measures 
$\{\mu_i\}_{i=1}^N$ on $\mathbb{R}_+^d$ with finite $q$-th moments, we consider the set
$$
\cG_\cT:= \left\{
\gamma(\om)=\sum_{i=1}^N
g_i(X_{t_i}(\om)) - \mu_i(g_i) : 
g_i \in \cC_{q,p}(\R_+^d) \mbox{ for all }i=1,\ldots,N \right\}.
$$
Then, $\cG_\cT \subset \cC_q(\Om_\cT)$.  The measures $Q \in \cQ(\cG_\cT)$ are
martingale measures and have marginal $\mu_i$ at times $t \in [t_i,t_i+h)$.
Assume  $0 \le t_1<\ldots<t_{N}\le T$.
In view of Strassen's result \cite{strassen}, $\cQ(\cG_\cT)$ is non-empty 
if and only if $\mu_i$'s are increasing in convex order, i.e,
$\mu_1(\varphi) \le \ldots \le \mu_N(\varphi)$,
for every convex function $\varphi :\R_+^d \to \R$.}}

\end{exm}

In the following examples, we collect some common option payoffs satisfying the assumptions of Theorem~\ref{t.main}.

\begin{exm}%\label{ex.cont}
{\rm{ The typical
examples of $S^*$-continuous functions are
the payoffs of Asian type options.
Indeed, let $g: [0,T] \to \R$
be continuous.
Then, 
$$
\xi(\om)= \int_0^T g(t) X^i_t(\om)\, dt,
$$
for any $i \in \{1,\ldots,d\}$, is $S^*$-continuous.  
However, the running maximum and minimum of
$X^i$ are only lower and upper semicontinuous, respectively; see \cite{jakubowski}.
We refer the reader to \cite{jakubowski}, \cite{stopo}
for further examples.}}
\qed
\end{exm}

In particular, the duality \eqref{e.dual2} holds for every derivative contract that is a measurable function of the 
underlying assets.

\begin{exm}%\label{ex.meas} 
{\rm 
Since $\Omega$ is a measurable subset of $\mathcal{D}([0,T];\mathbb{R}_+^d)$, we know from \cite{Karandikar1995}
that there exists an $\mathbb{F}$-progressively measurable $d\times d$-matrix-valued process 
$\langle X\rangle=(\langle X\rangle_t)_{t\in [0,T]}$ on $\Omega$ which equals the predictable quadratic 
variation of $X$ $Q$-a.s., for every $\mathbb{F}$-martingale measure $Q$ on $\Omega$. We define the 
$d\times d$-matrix-valued volatility process $\sigma = (\sigma_t)_{t \in [0,T]}$ as the 
square-root of the non-negative, symmetric matrix-valued process
\[
v_t(\omega):= \liminf_{\varepsilon\downarrow 0}\frac{\langle X\rangle_t(\omega)-\langle X\rangle_{(t-\varepsilon)\vee 0}(\omega)}{\varepsilon},\quad (t,\omega)\in[0,T]\times\Omega.
\]
In particular, $\sigma$ is a measurable process $\Omega$. So Theorem~\ref{t.main} yields model-independent 
price bounds for derivative contracts written on $\sigma$. However, the construction of the quadratic variation process 
$\langle X\rangle$ relies on stopping times and therefore, on $\mathbb{F}$-progressively measurable partitions of the 
interval $[0,T]$, which in general are non-deterministic; we refer to \cite{Bichteler1981} for details. 
In particular, derivative contracts depending on $\sigma$ are in general not upper semicontinuous on $\Omega$.}
\end{exm}

As a consequence of Theorem
\ref{t.main}, we obtain a 
{\em{fundamental theorem of asset pricing}} relating the non-emptiness of 
$\cQ(\cG)$ to an appropriate no-arbitrage condition. For classical versions of this result 
see e.g.~\cite{DMW} for discrete time, 
\cite{DS,DS1} for continuous time and the references therein. 
Robust versions have been derived in 
\cite{RFTAP,CKT,DS,mete3}. Our no-arbitrage conditions are 
the following.

\begin{cor}
\label{c.ftap}{\rm{\bf{Robust Fundamental Theorem of Asset Pricing.}}}\\
Under Assumption \ref{asm.1}, the following are equivalent:
\begin{itemize}
\item[{\rm (i)}]  $\cQ(\cG)$ is non-empty.
\item[{\rm (ii)}] $\Phi(\eta;\hcI(\cG))$ is finite for all $\eta \in \cB_p(\Om)$.
\item[{\rm (iii)}] $\Phi(0;\cI(\cG))=0$.
\end{itemize}
\end{cor}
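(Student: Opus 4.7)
The plan is to establish the two equivalences (i)~$\Leftrightarrow$~(iii) and (i)~$\Leftrightarrow$~(ii) by assembling results already proved in earlier sections. A useful standing observation is that $\Phi(0;\cI(\cG)) \le 0$ always, since $c=0$ and $\ell=0 \in \cI(\cG)$ are admissible in the definition of $\Phi$.

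For (i)~$\Rightarrow$~(iii), I would invoke the lower bound \reff{e.lower}: if $\cQ(\cG)$ is non-empty, then $\sigma_{\cQ(\cG)}(0) = 0$, and \reff{e.lower} forces $\Phi(0;\cI(\cG)) \ge 0$, which combined with the trivial upper bound yields (iii). The reverse direction (iii)~$\Rightarrow$~(i) is the substantive step. Under (iii), Corollary~\ref{c.alt}(i) ensures that $\Phi(\cdot;\cI(\cG))$ is real-valued on $\cB_b(\Om)$, and the proposition on $\beta_0$-continuity in Section~\ref{sec:Cb} then gives that $\Phi(\cdot;\cI(\cG))$ is convex, positively homogeneous and $\beta_0$-continuous on $\cC_b(\Om)$. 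Since $(\Om,S^*)$ is completely regular Hausdorff, the $\beta_0$-topological dual of $\cC_b(\Om)$ equals $\cM(\Om)$, so the Fenchel-Moreau theorem yields the representation $\Phi(\xi;\cI(\cG)) = \sigma_{\partial\Phi}(\xi)$ for $\xi \in \cC_b(\Om)$. If $\partial\Phi$ were empty, this supremum would be identically $-\infty$, contradicting $\Phi(0;\cI(\cG)) = 0$. Hence $\partial\Phi \ne \emptyset$, and Proposition~\ref{p.subdifferential} identifies $\cQ(\cG) = \partial\Phi$, so $\cQ(\cG) \ne \emptyset$.

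For the equivalence (i)~$\Leftrightarrow$~(ii), I would appeal directly to the duality $\Phi(\eta;\hcI(\cG)) = \sigma_{\cQ(\cG)}(\eta)$ from Theorem~\ref{t.main}. If $\cQ(\cG)$ is empty then this support functional is identically $-\infty$ on $\cB_p(\Om)$, which violates (ii); so (ii)~$\Rightarrow$~(i). Conversely, if $\cQ(\cG)$ is non-empty and $\eta \in \cB_p(\Om)$, choose $c>0$ with $|\eta| \le c(1+X_*^p)$. Combining \reff{e.power} with Jensen's inequality (using $p \le q$) gives
\[
\sup_{Q \in \cQ(\cG)} \E_Q[|\eta|] \le c\bigl(1 + (c_q^*)^{p/q}\bigr) < \infty,
\]
so $\Phi(\eta;\hcI(\cG)) = \sigma_{\cQ(\cG)}(\eta)$ is finite for every $\eta \in \cB_p(\Om)$, proving (ii).

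The main obstacle is the implication (iii)~$\Rightarrow$~(i), where one must verify that the hypothesis $\Phi(0;\cI(\cG))=0$ really does produce a dual element rather than merely preventing an arbitrage blow-up; this is precisely the Fenchel-Moreau step on $(\cC_b(\Om),\beta_0)$, for which the $\beta_0$-continuity machinery of Section~\ref{sec:Cb} and the characterization of the sub-differential in Proposition~\ref{p.subdifferential} do all the heavy lifting. The remaining implications reduce to short applications of Theorem~\ref{t.main} and the uniform moment bound in \reff{e.power}.
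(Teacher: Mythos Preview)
Your argument is correct. The paper gives no explicit proof of this corollary, treating it as an immediate consequence of Theorem~\ref{t.main}; the intended route is simply to evaluate \eqref{e.dual1} at $\xi=0$, yielding $\Phi(0;\cI(\cG))=\sigma_{\cQ(\cG)}(0)$, which is $0$ if $\cQ(\cG)\neq\emptyset$ and $-\infty$ otherwise, giving (i)~$\Leftrightarrow$~(iii) in one stroke. Your (i)~$\Leftrightarrow$~(ii) and (i)~$\Rightarrow$~(iii) match this; for (iii)~$\Rightarrow$~(i) you instead re-open the Fenchel--Moreau machinery and Proposition~\ref{p.subdifferential}. This is a legitimate detour---it avoids citing the full Theorem~\ref{t.main} and shows directly that a subgradient exists---but it duplicates work already packaged into \eqref{e.dual1}, so the paper's one-line application is more economical.
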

 
\begin{appendix}

\section{Appendix: $S$ and $S^*$-topologies}
\label{s.jakubowski}

The following definition is from Jakubowski \cite{jakubowski,jakubowski4}.

\begin{dfn}%\label{def.adam}
{\rm{For $\{\nu^n\}_{n \in \N} \subset \cD([0,T];\R_+^d)$ and $\nu^* \in \cD([0,T]; \R_+^d)$,
we write $\nu^n\rightharpoonup_S\nu^*$ if for each $\varepsilon>0$, there exist functions
$\{\nu^{{n},\varepsilon}\}_{n\in \N}$ and $\nu^{*,\varepsilon}$ in 
$\cD([0,T]; \R_+^d)$ which are of}} finite variation {\rm{such that
$$
 \|\nu^*-\nu^{*, \varepsilon}\|_\infty \leq \varepsilon, \quad 
  \|\nu^{n}-\nu^{{n},\varepsilon}\|_\infty \leq \varepsilon \quad \mbox{for every } n\in\mathbb{N},
$$
and
\begin{equation}\label{e.s}
  \lim_{n \to \infty} \int_{[0,T]}\ f(t)\, d\nu^{{n},\varepsilon}_t
  = \int_{[0,T]}\ f(t) \, d\nu^{*,\varepsilon}_t ,
\end{equation}
for all $f\in \cC_b([0,T];\mathbb{R}^d)$, where the integrals in \eqref{e.s} are Stieltjies integrals
with $\nu^{{n},\varepsilon}_{0-}= \nu^{*,\varepsilon}_{0-}= 0$.
The topology
generated by this sequential convergence is
called the}} $S$-topology. 
\end{dfn}

In particular, a subset $C \subset \cD([0,T];\mathbb{R}^d_+)$ is
$S$-closed if and only if it is 
sequentially closed for the above notion
of convergence, i.e.,
if $\{\nu^n\}_{n \in \N} \subset C$  
and $\nu^n\rightharpoonup_S\nu^*$,
then  $\nu^* \in C$. Open sets are the complements
of the closed ones.  One may directly verify that this collection of sets
satisfies the definition of a topology.

\begin{rem}
{\rm{
The  ({\em{a posteriori}})
convergence in this topology could be different from 
the {\em{a priori}} convergence
$\rightharpoonup_S$ defined above.  This
definition of a topology is known as 
the Kantorovich--Kisy\'nski recipe; see \cite{kisy}
or \cite[Sections 1.7.18, 1.7.19 on pages 63-64]{engel}.  
In particular, it is discussed in 
\cite[Appendix]{jakubowski4} that $\{\nu^n\}_{n \in \N}$
converges to  $\nu^*$ in the ({\em{a posteriori}})
$S$-topology,
if every subsequence  
$\{\nu^{n_k}\}_{k\in\mathbb{N}}$ has a further subsequence
$\{\nu^{n_{k_l}}\}_{l\in\mathbb{N}}$ such that
$\nu^{n_{k_l}}\rightharpoonup_S\nu^*$.

As a different example, if  one starts with almost-sure convergence as the {\em{a priori}}
convergence (instead of the $\rightharpoonup_S$ 
convergence as above), then  the resulting {\em{a posteriori}} convergence
is the convergence-in-probability; see \cite{jakubowski4}. }}
\end{rem}

The following fact from \cite{jakubowski, jakubowski4}
is an essential ingredient of our continuity proof.
Recall the up-crossings $U^{a,b,i}_t$ of Definition \ref{d.up}.

\begin{prop}[Jakubowski \cite{jakubowski}, Theorem 2.13; \cite{jakubowski4}, Theorem~5.7]
%\label{p.compact}
A subset $K \subset \cD([0,T];\R_+^d)$ is relatively $S$-compact if and only if 
\be
\label{e.compact}
\sup_{\om \in K} \|\om\|_\infty <\infty \quad
{\text{and}}\quad
\sup_{\om \in K} U_T^{a,b,i}(\om)
< \infty \mbox{ for all } a<b \mbox{ and } i = 1. \dots, d.
\ee
\end{prop}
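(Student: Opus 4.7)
The plan is to prove the two directions of the equivalence separately. For necessity, I would use that both $\|\cdot\|_\infty$ and each up-crossing count $U_T^{a,b,i}$ are $S$-lower semicontinuous as functionals on $\cD([0,T];\R_+^d)$ (the first is noted in Section~\ref{sec:set-up}; the second follows by tracking each pair of up-crossing times of an $\rightharpoonup_S$-limit back to up-crossings of the finite-variation $\varepsilon$-approximants $\nu^{n,\varepsilon}$ in the definition of $\rightharpoonup_S$, then letting $\varepsilon\downarrow 0$). Assuming $K$ is relatively $S$-compact, if one of the quantities in \eqref{e.compact} were unbounded on $K$, one could select $\omega^n\in K$ along which that quantity tends to infinity, pass to an $\rightharpoonup_S$-convergent subsequence $\omega^n\rightharpoonup_S\omega^*\in \cD([0,T];\R_+^d)$, and contradict lower semicontinuity.

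For sufficiency, the strategy is a Helly-type selection scheme based on finite-variation step-function approximations that are $\varepsilon$-close in the supremum norm. For $\varepsilon>0$ and $\omega\in K$, I would define stopping times recursively by
$$\sigma_0^\varepsilon(\omega):=0,\qquad \sigma_{k+1}^\varepsilon(\omega):=\inf\bigl\{t>\sigma_k^\varepsilon(\omega):|\omega(t)-\omega(\sigma_k^\varepsilon(\omega))|>\varepsilon\bigr\}\wedge T,$$
and set $\omega^\varepsilon(t):=\omega(\sigma_k^\varepsilon)$ on $[\sigma_k^\varepsilon,\sigma_{k+1}^\varepsilon)$ together with $\omega^\varepsilon(T):=\omega(T)$. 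Right-continuity gives $\|\omega-\omega^\varepsilon\|_\infty\le\varepsilon$. The key estimate is that the number $N^\varepsilon(\omega)$ of indices $k$ with $\sigma_k^\varepsilon<T$ admits, coordinate by coordinate, a bound of the form
$$N^\varepsilon(\omega)\le C_d\sum_{j=1}^{M_\varepsilon}\sum_{i=1}^d U_T^{a_j,b_j,i}(\omega)+C_d',$$
where the $(a_j,b_j)$ range over a grid of $O(\|\omega\|_\infty/\varepsilon)$ levels in $[0,\|\omega\|_\infty]$ with $b_j-a_j$ comparable to $\varepsilon/d$ (every $\varepsilon$-oscillation in some coordinate forces an up-crossing at one of these grid pairs). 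Under \eqref{e.compact}, this yields a uniform bound on $N^\varepsilon$ and hence on the total variation of $\omega^\varepsilon$ on $K$.

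Given any sequence $\{\omega^n\}\subset K$, I would fix $\varepsilon_k:=1/k$ and apply Helly's selection theorem to the step functions $\omega^{n,k}:=(\omega^n)^{\varepsilon_k}$, whose jump counts and total variations are uniformly bounded in $n$ for each $k$. A diagonal extraction yields a single subsequence $\{\omega^{n_\ell}\}$ along which, for every $k$, $\omega^{n_\ell,k}$ converges pointwise on $[0,T]$ to a càdlàg step function $\omega^{*,k}$ of finite variation, and $\int f\,d\omega^{n_\ell,k}\to\int f\,d\omega^{*,k}$ for every $f\in\cC_b([0,T];\R^d)$ (one passes to a further subsequence so that the finitely many jump times converge, guaranteeing the càdlàg property of $\omega^{*,k}$). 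The triangle inequality $\|\omega^{*,k}-\omega^{*,k'}\|_\infty\le\varepsilon_k+\varepsilon_{k'}$ then forces uniform convergence $\omega^{*,k}\to\omega^*$ to a càdlàg function $\omega^*\in\cD([0,T];\R_+^d)$ with $\|\omega^*-\omega^{*,k}\|_\infty\le\varepsilon_k$. Setting $\nu^{n,\varepsilon_k}:=\omega^{n_\ell,k}$ and $\nu^{*,\varepsilon_k}:=\omega^{*,k}$ verifies the defining convergence condition of $\rightharpoonup_S$, establishing relative $S$-compactness.

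The main obstacle will be the combinatorial estimate on $N^\varepsilon(\omega)$ in terms of up-crossings at a finite grid of levels; this is an elementary but delicate pigeonhole/tracking argument on càdlàg paths that sits at the heart of Jakubowski's characterization. A secondary technical point is ensuring that the Helly limits $\omega^{*,k}$ are genuinely càdlàg step functions (rather than merely of bounded variation), which requires the additional subsequencing to stabilize the jump times.
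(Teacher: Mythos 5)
First, note that the paper does not prove this proposition at all: it is quoted verbatim from Jakubowski (\cite{jakubowski}, Theorem 2.13; \cite{jakubowski4}, Theorem 5.7), so any comparison is with Jakubowski's original argument rather than with a proof in the text. Your sufficiency half is essentially a reconstruction of that argument (oscillation stopping times $\sigma^\varepsilon_k$, a pigeonhole bound of the number of $\varepsilon$-oscillations by up-crossings over a finite grid of levels, then Helly selection plus a diagonal procedure and the uniform $\varepsilon_k$-closeness to identify a c\`adl\`ag limit), and as a plan it is sound, modulo the two technical points you yourself flag and one you do not: relative compactness for the \emph{a posteriori} $S$-topology obtained via the Kantorovich--Kisy\'nski recipe is not the same as sequential compactness for $\rightharpoonup_S$, and passing between the two (in both directions of the equivalence) is part of what \cite{jakubowski} has to establish.

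The genuine gap is in your necessity argument. Lower semicontinuity of $\|\cdot\|_\infty$ and of $U^{a,b,i}_T$ along $S$-convergent sequences gives inequalities of the form $U^{a,b,i}_T(\om^*)\le\liminf_n U^{a,b,i}_T(\om^n)$, and such bounds can never contradict $U^{a,b,i}_T(\om^n)\to\infty$: a lower semicontinuous functional may well be unbounded above on a compact set (e.g.\ $f(x)=1/x$ for $x>0$, $f(0)=0$ on $[0,1]$), so selecting $\om^n\in K$ with the quantity tending to infinity and extracting an $\rightharpoonup_S$-convergent subsequence yields no contradiction. What necessity actually requires is the opposite kind of statement: along any sequence $\om^n\rightharpoonup_S\om^*$, the quantities in \eqref{e.compact} are \emph{uniformly bounded in $n$}. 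This is where the real work lies, and it does not come from semicontinuity. The standard route is: for fixed $\varepsilon$, the weak convergence of the Stieltjes measures $d\nu^{n,\varepsilon}$ in the definition of $\rightharpoonup_S$ forces, by the Banach--Steinhaus theorem, a uniform bound on their total variations; since $\nu^{n,\varepsilon}_{0-}=0$ this bounds $\sup_n\|\nu^{n,\varepsilon}\|_\infty$ and hence $\sup_n\|\om^n\|_\infty$, and for $\varepsilon<(b-a)/2$ every up-crossing of $(a,b)$ by $\om^n$ produces an up-crossing of $(a+\varepsilon,b-\varepsilon)$ by $\nu^{n,\varepsilon}$, whose number is at most the total variation divided by $b-a-2\varepsilon$; this yields $\sup_n U^{a,b,i}_T(\om^n)<\infty$. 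Combining this uniform bound along convergent (sub)sequences with relative compactness of $K$ then gives \eqref{e.compact} by contradiction. Without replacing your semicontinuity step by an argument of this type, the ``only if'' direction is not proved.
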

Let us denote the relative topology of $S$ on $\Omega$ again by $S$. It is not known whether 
$(\Omega,S)$ is completely regular. As this property plays an important role in our analysis, 
we regularize $S$ on $\Omega$ analogously to \cite{stopo}.

\begin{dfn}
{\rm{The}} $S^*$-topology {\rm{on $\Omega$ is the coarsest topology making all $S$-continuous 
functions $\xi : \Omega \to \R$ continuous.}}
\end{dfn}

It is clear from this definition that $S^* \subset S$, and a function 
$\xi \colon \Omega \to \mathbb{R}$ is $S^*$-continuous if and only if it is $S$-continuous.
Moreover, $(\Omega, S)$ and $(\Omega, S^*)$ are both Hausdorff, and since
compact sets stay compact if the topology is weakened, every
$S$-compact subset of $\Omega$ is also $S^*$-compact.

The collection of finite intersections of sets of the form
\[
O_{\xi,\varepsilon}(\omega_*):=\left\{ \om \in \Om : |\xi(\om)- \xi(\om_*)| < 1 \right\}
\]
with arbitrary $S$-continuous functions $\xi \colon \Omega \to \mathbb{R}$, form a neighborhood basis 
at $\om_*$. In particular, for any $S^*$-open set $O$
and $\om_* \in O$, there is a neighborhood of $\om_*$ of the form
\[
\bigcap_{k=1}^n  \left\{ \om \in \Om : |\xi_k(\om)- \xi_k(\om_*)| <1\right\},
\]
contained in $O$, where each $\xi_k$ is an $S$-continuous function from $\Omega$ to $\mathbb{R}$. 
For each $k\leq n$, set $\eta_k(\omega) =|\xi_k(\om)- \xi_k(\om_*)| \wedge 1$ and $\eta(\omega)=
\max_{k\leq n} \eta_k(\omega)$. Then, $\eta$ continuously maps $\Omega$ into $[0,1]$ 
and satisfies $\eta(\om_*)=0$ and $\eta(\om)=1$ 
for all $\om \not \in O$. This is the defining property of a completely regular space.
Hence, $(\Omega,S^*)$ is a completely regular Hausdorff space, (T$_{3\frac12}$).  In fact,
it turns out to be perfectly normal.

\begin{lem}%\label{l.matti1}
$(\Omega,S^*)$ is perfectly normal Hausdorff {\rm{(}}T$_{6}${\rm{)}} and a Lusin space.
In particular, every Borel probability measure on $(\Om, S^*)$ is a Radon measure.
\end{lem}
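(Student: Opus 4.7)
The plan is to derive perfect normality, the Lusin property, and the Radon property jointly from a single observation: $(\Omega, S^*)$ is the continuous bijective image of a Polish space, i.e.\ a Lusin space.

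First I would recall that the classical Skorokhod $J_1$-topology on $\cD([0,T]; \R_+^d)$ is Polish (Billingsley) and is finer than Jakubowski's $S$-topology \cite{jakubowski}. Since refining a topology enlarges the family of closed sets, the $S$-closedness of $\Omega$ forces $J_1$-closedness, so $(\Omega, J_1)$ is a closed subspace of a Polish space and hence Polish itself.

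Next I would verify the chain $S^* \subset S \subset J_1$ of topologies on $\Omega$. The second inclusion is standard. For the first, given any $S$-continuous $\xi \colon \Omega \to \R$ and any open $U \subset \R$, the preimage $\xi^{-1}(U)$ is $S$-open; since $S^*$ is, by definition, the coarsest topology in which every such $\xi$ is continuous, $S^*$ is generated by $S$-open sets and is therefore contained in $S$. Consequently, the identity map $(\Omega, J_1) \to (\Omega, S^*)$ is a continuous bijection from a Polish space, which by definition exhibits $(\Omega, S^*)$ as a Lusin space.

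The three conclusions then follow from the standard theory of Lusin spaces: every Hausdorff Lusin space is perfectly normal (T$_6$), and every finite Borel measure on a Hausdorff Lusin space is Radon; see, for example, \cite[Theorem~7.4.3]{bogachev} and the surrounding results on Lusin spaces. I expect the main obstacle to be not conceptual but referential, namely pinning down precise citations for the transfer results (perfect normality of Lusin spaces, Radon regularity of finite Borel measures, and the relative position of $S$, $S^*$, and $J_1$). Once the chain $S^* \subset S \subset J_1$ with $(\Omega, J_1)$ Polish is in place, the lemma is automatic; no estimate involving up-crossings or the bespoke compact sets from Theorem~\ref{t.local} is required.
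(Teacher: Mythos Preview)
Your proposal is correct and follows essentially the same route as the paper: both arguments exhibit $(\Omega,S^*)$ as a Lusin space via the continuous bijection from the Polish space $(\Omega,J_1)$ (using $S^*\subset S\subset J_1$ and $S$-closedness of $\Omega$), and then read off perfect normality and the Radon property from the general theory of Lusin spaces.

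One small point worth tightening: you assert that ``every Hausdorff Lusin space is perfectly normal.'' The paper is more cautious here and invokes Fernique's result that every \emph{completely regular} Lusin space is perfectly normal, having established complete regularity of $(\Omega,S^*)$ in the paragraph immediately preceding the lemma. Since that complete regularity is already available to you, there is no real gap---but you should either cite a precise reference for the Hausdorff-only statement or, more safely, insert the phrase ``completely regular'' and point back to the prior paragraph. This is exactly the referential issue you anticipated, not a conceptual one.
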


\begin{proof}
It is well-known that the standard $J_1$-topology on the Skorokhod space 
is Polish. Moreover, by \cite[Theorem~2.13 (vi)]{jakubowski}, $S \subset J_1$. 
So, since $\Omega$ is $S$-closed it is also $J_1$-closed. Therefore, if we
denote the relative $J_1$-topology on $\Omega$ again by $J_1$, 
$(\Omega, J_1)$ is still Polish and $S^* \subset J_1$. As a consequence, 
the identity map from $(\Om,J_1)$ to $(\Om,S^*)$ is bijective and continuous, which 
shows that $(\Omega, S^*)$ is a Lusin space.

\cite[Proposition~I.6.1, page 19]{fern} proves that any  completely regular Lusin space is 
perfectly normal.
We note that \cite{fern} uses the terminology ``{\em{Espaces standards}}''
\cite[Definition~I.2.1, page 7]{fern} which is exactly
a Lusin space and the term ``{\em{r\'egulier}}''  as defined on page 18 in \cite{fern}
corresponds to completely regular. 
The reader may also consult page 64 of \cite{dudley} for 
a brief discussion of this implication.

Finally, on a Lusin space, every Borel probability measure is Radon;
see e.g., \cite[p.~122]{schwartz1973}. 
\end{proof}

We also need the following facts about the $S^*$-topology.

\begin{lem}
\label{l.matti}  Every $S^*$-upper semicontinuous function from $\Omega$ to 
$\mathbb{R}$ is the pointwise limit of a decreasing sequence 
of $S^*$-continuous functions, and the family of Suslin functions generated by 
$\cU_b(\Om)$ includes $\cB_b(\Om)$.
\end{lem}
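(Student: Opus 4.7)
The proof splits into two parts, both drawing on the perfect normality of $(\Omega, S^*)$ established in the preceding lemma.

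For the first assertion, I invoke the classical theorem that in a perfectly normal Hausdorff space, every upper semicontinuous function $\eta: \Omega \to \R$ is the pointwise limit of a decreasing sequence of continuous functions (see, e.g., Engelking's \emph{General Topology}). The key ingredient is the zero-set characterization of perfect normality: every closed $F \subset \Omega$ equals $f^{-1}(\{0\})$ for some continuous $f: \Omega \to [0, 1]$, so $\one_F$ is the decreasing pointwise limit of the continuous functions $(1 - n f)^+$. Extension to a general bounded $S^*$-u.s.c.\ $\eta$ follows by an upper-envelope construction combined with Tong--Kat\v{e}tov insertion (which requires perfect normality); the unbounded case reduces to the bounded one by truncating $\eta \wedge N$ and applying a running-minimum diagonal to merge the resulting sequences.

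For the second assertion, let $\mathcal{S}$ denote the family of Suslin functions generated by $\cU_b(\Omega)$. Two elementary closure properties suffice: (i) $\cU_b(\Omega) \subset \mathcal{S}$ via constant Suslin schemes; and (ii) $\mathcal{S}$ is closed under countable suprema and countable infima of uniformly bounded sequences, by standard reindexing based on the bijection $\N \times \N^\N \cong \N^\N$. Consequently, $\one_F \in \mathcal{S}$ for every closed $F$, and since $(\Omega, S^*)$ is perfectly normal every open set is an $F_\sigma$, whose indicator is a countable supremum of closed-set indicators and hence lies in $\mathcal{S}$. The class of Borel $B$ with $\one_B \in \mathcal{S}$ therefore contains both the open and closed sets and is closed under countable unions and countable intersections. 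Using that $(\Omega, S^*)$ is a Lusin space (from the preceding lemma), the classical theorem that every Borel subset of a Lusin space is a Suslin set generated by closed sets (see \cite[Section~42]{fremlin} or \cite{bogachev}) yields $\one_B \in \mathcal{S}$ for every Borel $B$.

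For a general $\xi \in \cB_b(\Omega)$, approximate by the Borel simple functions $\xi_n := 2^{-n} \lfloor 2^n \xi \rfloor$ (shifted to be nonnegative if needed), which converge uniformly and monotonically to $\xi$ and are finite nonnegative linear combinations of Borel indicators. After checking that $\mathcal{S}$ is closed under finite nonnegative linear combinations (a direct extension of (ii) using that $\cU_b$ is a cone and the combination trick for Suslin schemes via interleaving of index sequences), each $\xi_n \in \mathcal{S}$, and then $\xi \in \mathcal{S}$ by the monotone-limit form of (ii). The principal obstacle is the combinatorial bookkeeping for the closure properties of the Suslin class $\mathcal{S}$ and the invocation of the Lusin--Souslin theorem ensuring that every Borel set in a Lusin space is analytic; both ingredients are standard in descriptive set theory but require careful reindexing.
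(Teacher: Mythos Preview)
Your argument is correct and, for the first assertion, matches the paper's approach exactly: both rely on perfect normality and the Tong--Kat\v{e}tov insertion theorem (the paper simply cites \cite{tong}, \cite{dellacheriemeyer}, \cite{engel}).

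For the second assertion your route differs from the paper's. The paper bypasses all the combinatorics of Suslin schemes by quoting two facts: (a) on any topological space every Baire set is a Suslin set generated by the closed sets (Fremlin, Proposition~421L), and (b) on a perfectly normal Hausdorff space the Baire and Borel $\sigma$-algebras coincide (Bogachev, Proposition~6.3.4). Combining (a) and (b) immediately gives that every Borel indicator---hence every bounded Borel function---lies in the Suslin class generated by $\cU_b(\Om)$. This uses only perfect normality, not the Lusin property. Your approach instead builds up the Suslin class by hand (closure under countable sup/inf, the $F_\sigma$ structure of open sets, then the Lusin--Souslin theorem for Borel sets in Lusin spaces, and finally a simple-function approximation requiring closure under nonnegative linear combinations). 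This is valid, but heavier: the interleaving bookkeeping you flag as ``the principal obstacle'' is genuinely delicate, and the step from Borel indicators to general $\xi \in \cB_b(\Om)$ is cleaner if done via $\xi = \sup_{r \in \Q} r\, \one_{\{\xi > r\}}$ (a countable supremum of scaled indicators) rather than via sums. The paper's two-citation argument avoids all of this.
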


\begin{proof}
The statement about approximation of upper
semicontinuous functions by continuous ones
is proved in \cite[Theorem~3]{tong}. Also, see 
\cite[Theorem~49~(c)]{dellacheriemeyer} or \cite[Page~61]{engel}.

The statement about Suslin functions is proved in 
\cite{BCK} (see the end of the proof of Theorem 2.2).
Alternatively, by Proposition 421L in \cite[page~143]{fremlin}
on any topological space,  every Baire set is a Suslin set.
On perfectly normal Hausdorff spaces, Baire and Borel sets agree
\cite[Proposition~6.3.4]{bogachev}.  Hence, bounded Borel functions
with respect to $S^*$ are Suslin.  
\end{proof}

\begin{rem}%\label{r.matti}
{\rm{
\cite{stopo} contains more results about the $S^*$-topology on $\cD([0,T];\mathbb{R}^d)$.
In particular, the compact sets of $S^*$ and $S$ agree.
Also,
the $S^*$-topology is the strongest topology on the Skorokhod space 
for which the  compactness criteria \reff{e.compact} holds
and the Riesz representation theorem with the $\beta_0$-topology is true.
}}
\end{rem}

\section{Appendix: $\beta_0$-topology}
\label{sec:beta0}

Let $E$ be a completely regular Hausdorff space
and recall that $\cB_0(E)$ is the set of real-valued, bounded, Borel measurable
functions on $E$ that vanish at infinity. Note that any perfectly
normal topology, such as $S^*$ on $\Om$, is completely regular.

For each $\eta \in \cB_0^+(E)$ consider the semi-norm on $\cC_b(E)$ given by,
$$
\|\xi\|_\eta := \|\xi \eta\|_\infty := \sup_{x \in E}  |\xi(x)\eta(x)|.
$$
The $\beta_0$-topology on $\cC_b(E)$ is generated  by the semi-norms $\| . \|_\eta$
as $\eta$ varies in $\cB_0^+(E)$.  
Importantly, the topological dual of $\cC_b(E)$
with the $\beta_0$-topology is the set of all signed Radon measures of bounded total 
variation on $E$; see e.g., \cite[Theorem~3, page 141]{jarchow} or \cite{sentilles} for further 
details on the $\beta_0$-topology.

%\begin{lem}
%\label{l.bcont}
%Suppose that for a 
%given $\Phi :\cC_b(\Om) \to \R$
%there exists
%$\eta \in \cB_0^+(\Om)$ and $c>0$ so that
%$|\Phi(\xi)- \Phi(\zeta)|  \le c \|\xi -\zeta\|_\eta$
%for all $\xi,\zeta \in \cC_b(\Om)$.
%Then,  $\Phi$
%is $\beta_0$-continuous.
%\end{lem}
%\begin{proof}
%The sets of the form
%$B(r,\eta,\xi_0):= \{\xi \in \cC_b(\Om)\ :\
%\|\xi - \xi_0\|_\eta < r \}$,
%with $r>0$, $\eta \in \cB_0^+(\Om)$, 
%$\xi_0 \in  \cC_b(\Om)$, is a basis 
%for the $\beta_0$-topology. Then,
%the continuity of $\Phi$ follows 
%from elementary arguments.
%\end{proof}

\section{Appendix: Martingale Measures}%\label{sec:martingale}

\begin{lemma}
\label{l.martingale}
Let $Q \in {\cal P}(\Omega)$ such that $\sup_{t \in [0,T]} \mathbb{E}_Q[ |X_t|^q] < \infty$ for some $q > 1$ and
$\mathbb{E}_Q[Y \cdot (X_T - X_t)] = 0$ for every $t \in [0,T]$ and all ${\cal F}_t$-measurable 
$Y \in {\cal C}_b(\Omega)^d$. Then, the canonical map $X$ is an $(\mathbb{F}, Q)$-martingale.
\end{lemma}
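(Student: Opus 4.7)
Integrability at each $t$ is given by hypothesis and adaptedness is by definition, so the task is to establish the martingale property. By the tower property it suffices to show that $X_s = \mathbb{E}_Q[X_T \mid \mathcal{F}_s]$ for every $s \in [0,T]$, i.e. $\mathbb{E}_Q[Y(X^i_T - X^i_s)] = 0$ for each coordinate $i$ and every bounded $\mathcal{F}_s$-measurable scalar $Y$. My plan is to establish this through two approximation steps: first replace $Y$ by a time-averaged analogue in order to invoke the hypothesis (which demands continuity), and then close the gap between the past $\sigma$-algebra $\mathcal{F}^{X,-}_t := \sigma(X_r : r < t)$ and the right-continuous $\mathcal{F}_s$ by using that $\mathbb{F}$ is already right-continuous and that $\{X_u\}$ is $L^q$-bounded.

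\emph{Step A (orthogonality on $\mathcal{F}^{X,-}_t$).}  Fix $t \in [0,T]$ and $i$. For $r < t$ and $\varepsilon > 0$ with $r+\varepsilon \le t$, set $X^\varepsilon_r := \varepsilon^{-1}\int_r^{r+\varepsilon} X_u\,du$; as noted in the proof of Proposition~\ref{p.subdifferential}, $X^\varepsilon_r$ is $S^*$-continuous, and it is also $\mathcal{F}^X_{r+\varepsilon} \subset \mathcal{F}_t$-measurable. For any $n$, any $r_k < t$ with $r_k + \varepsilon_k \le t$, and any bounded continuous $g : (\mathbb{R}^d_+)^n \to \mathbb{R}$, the function $Y = g(X^{\varepsilon_1}_{r_1},\ldots,X^{\varepsilon_n}_{r_n})\, e_i$ lies in $\mathcal{C}_b(\Omega)^d$ and is $\mathcal{F}_t$-measurable, so the hypothesis gives $\mathbb{E}_Q[g(X^{\varepsilon_1}_{r_1},\ldots)(X^i_T - X^i_t)] = 0$. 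Letting $\varepsilon_k \downarrow 0$, right-continuity of c\`adl\`ag paths yields $X^{\varepsilon_k}_{r_k}(\omega) \to X_{r_k}(\omega)$ pointwise; dominated convergence (with dominating function $\|g\|_\infty |X^i_T - X^i_t| \in \mathcal{L}^1(\Omega,Q)$) gives
\[
\mathbb{E}_Q\bigl[g(X_{r_1},\ldots,X_{r_n})(X^i_T - X^i_t)\bigr] = 0, \qquad r_k < t,\ g \in \mathcal{C}_b((\mathbb{R}^d_+)^n).
\]
The class $\{\phi \text{ bounded Borel} : \mathbb{E}_Q[\phi(X^i_T - X^i_t)] = 0\}$ is a vector space containing $1$ (use $Y = c\,e_i$ in the hypothesis) and stable under bounded pointwise convergence, and the functions above form a multiplicative family generating $\mathcal{F}^{X,-}_t$. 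The functional monotone class theorem therefore gives $\mathbb{E}_Q[\phi(X^i_T - X^i_t)] = 0$ for every bounded $\mathcal{F}^{X,-}_t$-measurable $\phi$.

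\emph{Step B (closing the gap).}  Fix $s \in [0,T)$ and a bounded $\mathcal{F}_s$-measurable $Y$. For every $\varepsilon > 0$ one has $\mathcal{F}_s \subset \mathcal{F}^X_{s+\varepsilon/2} \subset \sigma(X_r : r < s+\varepsilon) = \mathcal{F}^{X,-}_{s+\varepsilon}$, so $Y$ is also $\mathcal{F}^{X,-}_{s+\varepsilon}$-measurable, and Step A applied at $t = s+\varepsilon$ yields $\mathbb{E}_Q[Y(X^i_T - X^i_{s+\varepsilon})] = 0$. As $\varepsilon \downarrow 0$, $X^i_{s+\varepsilon} \to X^i_s$ pointwise by right-continuity, and $\{X^i_{s+\varepsilon}\}$ is uniformly integrable since $\sup_u \mathbb{E}_Q[|X^i_u|^q] < \infty$ with $q > 1$; consequently $\mathbb{E}_Q[YX^i_{s+\varepsilon}] \to \mathbb{E}_Q[YX^i_s]$, giving $\mathbb{E}_Q[Y(X^i_T - X^i_s)] = 0$. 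The case $s = T$ is trivial. Hence $\mathbb{E}_Q[X_T \mid \mathcal{F}_s] = X_s$ $Q$-a.s.\ for every $s$, and the martingale property follows by the tower property.

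\emph{Main obstacle.}  The delicate point is that the hypothesis provides orthogonality only against continuous test functions $Y$, whereas $\mathcal{F}_s$-measurable functions are rarely continuous on a Skorokhod space where coordinate maps $X_r$ are themselves discontinuous. The resolution that I plan is the two-tier approximation above: time-averaging converts the problematic coordinate evaluations $X_r$ into $S^*$-continuous quantities, which (combined with monotone class) handles $\mathcal{F}^{X,-}_t$; the leftover gap between $\mathcal{F}^{X,-}_t$ and the right-continuous $\mathcal{F}_s$ is precisely where right-continuity of the c\`adl\`ag process together with the $L^q$-bound ($q>1$) is essential.
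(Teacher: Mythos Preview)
Your proof is correct and follows essentially the same route as the paper: approximate the coordinate evaluations by the $S^*$-continuous time averages $X^\varepsilon_r$ so that the hypothesis can be invoked, extend by a monotone class argument, and then close the gap to the right-continuous filtration using right-continuity of $X$ together with the $L^q$-bound for uniform integrability. The only difference is organizational—you split the argument into two clean steps via the intermediate $\sigma$-algebra $\mathcal{F}^{X,-}_t$, whereas the paper combines the time-averaging and the shift $t \mapsto t+\varepsilon$ into a single limit—but the substance is the same.
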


\begin{proof} 
Fix $t < T$, and denote by ${\cal A}$ the family of all subsets of $\Omega$ that can be written 
as a finite intersection of sets of the form $X^{-1}_{t_j}(B_j)$ for $t_j \le t$ 
and a Borel subset $B_j$ of $\mathbb{R}^d$. Let $i \in \{1, \dots, d\}$. If we can show that
\begin{equation} \label{defce}
\mathbb{E}_Q[\one_A (X^i_T - X^i_t)] = 0 \quad \mbox{for all }  A \in {\cal A}, 
\end{equation}
it follows from a monotone class argument that 
\[
\mathbb{E}_Q[\one_A (X^i_T - X^i_t)] = 0 \quad \mbox{for all } A \in {\cal F}^X_t.
\] 
By uniform integrability and right-continuity of $X$, this implies
\[
\mathbb{E}_Q[\one_A(X^i_T - X^i_t)] = \lim_{\varepsilon \downarrow 0} 
\mathbb{E}_Q[\one_A(X^i_T - X^i_{t + \varepsilon})] = 0 \quad \mbox{for all } A \in {\cal F}_t,
\] 
which proves the lemma.

To show \eqref{defce}, note that for every set $A \in {\cal A}$ of the form 
\[
A = X^{-1}_{t_1}(B_1) \cap \dots \cap X^{-1}_{t_k}(B_k)
\]
for $t_1, \dots, t_k \le t$ and
Borel subsets $B_1, \dots, B_k$ of $\mathbb{R}^d$, there exist bounded 
continuous functions $f^n_j \colon \mathbb{R}^d \to \mathbb{R}$ such that
\begin{equation}\label{EQ1}
\mathbb{E}_Q[\one_A (X^i_T - X^i_t)] = \lim_{n \to \infty}
\mathbb{E}_Q[f^n_1(X_{t_1}) \cdots f^n_k(X_{t_k}) (X^i_T - X^i_t)].
\end{equation}
On the other hand, for all $n$, one has
\begin{equation} \label{EQ2}
\mathbb{E}_Q[f^n_1(X_{t_1}) \cdots f^n_k(X_{t_k})(X^i_T - X^i_t) ] 
= \lim_{\varepsilon \downarrow 0} \mathbb{E}_Q[f^n_1(X^{\varepsilon}_{t_1}) 
\cdots f^n_k(X^{\varepsilon}_{t_k}) (X_T - X_{t + \varepsilon}) ], 
\end{equation}
for the $S$-continuous functions 
\[
X^{\varepsilon}_{t_j} = \frac{1}{\varepsilon} \int_{t_j}^{t_j +\varepsilon} X_{u \wedge T}\, du, \quad j = 1, \dots, d.
\]
Since $f^n_1(X^{\varepsilon}_{t_1}) \cdots f^n_k(X^{\varepsilon}_{t_k})$ is ${\cal F}_{t+ \varepsilon}$-measurable
and belongs to ${\cal C}_b(\Omega)$, it follows from the assumptions that \eqref{EQ1}--\eqref{EQ2} vanish, 
and the proof is complete.
\end{proof}

\end{appendix}

\bibliographystyle{abbrv}
\bibliography{sp}

\end{document}